\newcommand{\Cbeta}{${\rm C\beta E}_N$}
\renewcommand{\i}{\mathbf{i}}
\newcommand{\Hi}{\mathscr{H}}
\newcommand{\C}{\mathds{C}}
\newcommand{\E}{\mathds{E}}
\renewcommand{\P}{\mathds{P}}
\renewcommand{\d}{\mathrm{d}}
\newcommand{\1}{\mathds{1}}
\newcommand{\Co}{\mathscr{C}}
\newcommand{\G}{\mathrm{G}}
\renewcommand{\O}{\mathcal{O}}
\newcommand{\N}{\mathbb{N}}
\newcommand{\R}{\mathds{R}}
\newcommand{\T}{\mathds{T}}
\newcommand{\U}{\mathscr{U}}
\newcommand{\W}{\operatorname{W}}
\newcommand{\sgn}{\operatorname{sgn}}
\newcommand{\Z}{\mathbb{Z}}
\newcommand{\D}{\mathds{D}}
\numberwithin{equation}{section}
\newtheorem{theorem}{Theorem}[section]
\newtheorem{proposition}[theorem]{Proposition}
\newtheorem{corollary}[theorem]{Corollary}
\newtheorem{lemma}[theorem]{Lemma}
\theoremstyle{definition} \newtheorem{remark}{Remark}[section]
\title{
\vspace*{-2cm}
Mesoscopic central limit theorem for the  circular $\beta$ ensembles and applications}
\date{\today}
\author{Gaultier Lambert\thanks{University of Zurich, Switzerland. E-mail: \href{mailto:gaultier.lambert@math.uzh.ch}{gaultier.lambert@math.uzh.ch} }}
\begin{document}

\maketitle

\begin{abstract}{\normalsize
We give a simple proof of a central limit theorem for linear statistics of the Circular $\beta$--ensembles which is valid at almost arbitrary mesoscopic scale and for functions of class $\Co^3$. As a consequence, using a coupling introduced by Valk\`o and Vir\`ag \cite{VV18}, we deduce a central limit theorem for the Sine$_\beta$ processes. 
We also discuss the connection between our result and the theory of Gaussian Multiplicative Chaos.  Based on the result of \cite{LOS18}, we show that the exponential of the logarithm of the real  (and imaginary) part  of the characteristic polynomial of the  Circular $\beta$--ensembles, regularized at a small mesoscopic scale and renormalized, converges to GMC measures in the subcritical regime. This implies that the leading order behavior for the extreme values of the logarithm of the characteristic polynomial is consistent with the predictions of log--correlated Gaussian fields.

}\end{abstract}



\section{Introduction and results}

\subsection{Circular $\beta$--ensembles}  \label{sect:cbe}

The circular $\beta$--ensemble or \Cbeta\ for $N\in \N$ is a  point process $0<\theta_1<\cdots <\theta_N <2\pi$ with joint density
\begin{equation} \label{CbN}
\d \P^\beta_{N} = \frac{\Gamma(1+\frac \beta2)^n}{\Gamma(1+n\frac \beta2)} \prod_{1\le j<k \le N} |e^{\i \theta_j}- e^{\i \theta_j}|^\beta \prod_{1\le k \le N} \frac{d\theta_k}{2\pi} , 
\end{equation}
where $\Gamma$ denotes the Gamma function. 
When  $\beta=1,2,4$, these ensembles correspond to the eigenvalues of random matrices sampled according to the Haar measure on the compact groups O$(N)$, U$(N)$ and Sp$(N)$ respectively. 
These ensembles were introduced by Dyson  \cite{Dyson62} as a toy model for scattering matrices or evolution operators coming from quantum mechanics. For general $\beta>0$, \eqref{CbN} corresponds to the  Gibbs measure for $N$ charged particles confined on the circle at temperature $\beta^{-1}$ and interacting via the two--dimensional Coulomb law. For this reason, $\beta$--ensembles are also called log--gases.
It is known that \eqref{CbN} also corresponds to the eigenvalues of certain CMV random matrices \cite{KN04}, so we will refer to the random points $(\theta_j)_{j=1}^N$ as \emph{eigenvalues}.
We refer to Forrester \cite[Chapter 2]{Forrester10} for an in depth introduction to circular $\beta$--ensembles. 

\medskip

We define  the empirical measure by 
$\mu_N = \sum_{j=1}^N \delta_{\theta_j}$ 
and its centered version by 
$\widetilde{\mu}_N = \mu_N - N \frac{d\theta}{2\pi}$. 
 In the following, a linear statistic is a random variable of the form 
\begin{equation} \label{LS}
\int f d \widetilde{\mu}_N =  \sum_{j\le N} f(\theta_j)  - N \widehat{f}_0
\end{equation}
\vspace{-.3cm}

\noindent where $f$ is a continuous function on  $\T= \R/2\pi$ and 
$\displaystyle \widehat{f}_k = \int_\T f(\theta)e^{-\i k\theta} \frac{d\theta}{2\pi}$
for $k\in\Z$ denote the Fourier coefficients of~$f$. 
 Moreover, by \emph{mesoscopic} linear statistic,  we refer to the case where the test function in \eqref{LS} depends on the dimension~$N$ in such a way that 
$f(\theta)= w(L\theta)$ for  $w\in \Co_c(\R)$  and for a sequence   $L=L(N)\to+\infty$ with $L(N)/N \to 0$ as $N\to+\infty$. 
In this regime, it is usual to consider test functions with compact support so that the random variable~\eqref{LS} depends on a large but vanishing fraction of the eigenvalues.

\subsection{Central limit Theorems}

The main goal of this article is to study the fluctuations of linear statistics of the   \Cbeta\ for large $N$ at small mesoscopic scales. 
The circular ensembles are technically easier to analyse than $\beta$--ensembles on $\R$, so this is also an opportunity to give a comprehensive presentation of the method of \emph{loop equation} introduced in \cite{Johansson98}. Then, we discuss applications of our result to the characteristic polynomial of the   \Cbeta\  in section~\ref{sect:charpoly} and we obtain a central limit theorem (CLT) for  the Sine$_\beta$  processes in section~\ref{sect:sine}.

\begin{theorem} \label{clt:meso}
Let $w \in \Co^{3+\alpha}_c(\R)$ for some $\alpha>0$. Let $L(N)>0$ be a sequence such that  $L(N)\to+\infty$ in such a way that  $N^{-1} L(N) (\log N)^3 \to 0$  as $N\to+\infty$ and let 
$w_L(\cdot) = w(\cdot L)$.  Then, we have for any $\beta>0$ as $N\to+\infty$, 
\begin{equation}  \label{Laplace}
\E^\beta_N\left[ \exp\left( \int w_L d \widetilde{\mu}_N  \right)\right]
\to \exp\left( \beta^{-1}\| w\|_{H^{1/2}(\R)}^2 \right).  
\end{equation}
\end{theorem}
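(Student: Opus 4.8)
The plan is to use the loop equation (also called the Dyson--Schwinger equation) for the \Cbeta, exploiting the rotational invariance of the model. The key observation is that it suffices to establish \eqref{Laplace} when $w$ is replaced by $tw$ and to show that the logarithm of the Laplace transform, as a function of $t$, has the right value; equivalently, one differentiates in $t$ and shows that $\frac{d}{dt} \log \E^\beta_N[e^{t\int w_L d\widetilde\mu_N}]$ converges to $2\beta^{-1}\|w\|_{H^{1/2}}^2 \, t$. Writing $\|w\|_{H^{1/2}(\R)}^2 = \sum |k| |\widehat w_k|^2$ heuristically, it is natural to reduce to exponential test functions: by linearity (over $\Z$) of the Fourier decomposition and because the $H^{1/2}$ norm is quadratic, the main computation is to understand $\E^\beta_N[e^{t\int w_L d\widetilde\mu_N} \cdot \widetilde\mu_N(e^{\i k \theta})]$, i.e. the joint Laplace transform perturbed by a single Fourier mode.

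First I would derive the loop equation: for a smooth function, integration by parts against the density \eqref{CbN} (using $\sum_j \partial_{\theta_j}$ applied to a suitable test object, or more cleanly the ``rotation + dilation'' trick on the circle) yields an identity relating $\E^\beta_N[\text{(first derivative term)}]$, $\E^\beta_N[\text{(energy/double-sum term)}]$ and $\E^\beta_N[\text{(lower order)}]$. Concretely, testing against $e^{\i m\theta}$ for each Fourier mode $m$ gives a recursion: $\E^\beta_N$ of $\widetilde\mu_N(e^{\i m\theta})$ times the exponential can be expressed in terms of $\E^\beta_N$ of $\widetilde\mu_N(e^{\i m'\theta})$ for $|m'|<|m|$ and a quadratic term $\widetilde\mu_N(e^{\i m'\theta})\widetilde\mu_N(e^{\i (m-m')\theta})$. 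Second, I would substitute $w_L = w(\cdot L)$ so that $\widehat{(w_L)}_k = \widehat{w}(k/L)$ is supported (essentially) on $|k| \lesssim L$ and smooth; the $\Co^{3+\alpha}$ assumption gives decay $\widehat{w}(\xi) = O(|\xi|^{-3-\alpha})$ which makes all the Fourier sums absolutely convergent with room to spare. Third, I would run a bootstrap / a priori estimate: first show $\E^\beta_N[|\widetilde\mu_N(e^{\i k\theta})|^2]$ is bounded by (roughly) $\min(|k|, N)/\beta$ — this is where $\beta=2$ is exactly computable and general $\beta$ follows from the loop equation by induction on $|k|$ — then feed this back to control the error terms in the perturbed loop equation. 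The cumulant/exponential version is handled by the standard device of writing $\frac{d}{dt}$ of the log-Laplace transform and iterating the loop equation, with the quadratic terms contributing the limiting variance and all other terms being shown to vanish as $N\to\infty$.

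The main obstacle will be controlling the error terms uniformly: the quadratic term $\widetilde\mu_N(e^{\i m'\theta})\widetilde\mu_N(e^{\i (m-m')\theta})$ in the loop equation is genuinely of the same order as the main term, so one must extract its expectation (which gives the variance) and show the fluctuation part, together with the derivative term and the boundary/regularization terms, is negligible — this requires the a priori bound on second moments of Fourier modes up to order $\sim L$, and crucially that $L$ is not too large. Tracking constants, the condition $N^{-1}L(\log N)^3 \to 0$ is exactly what is needed to sum the errors: the $(\log N)^3$ presumably arises from three nested applications of the loop equation / from summing $\sum_{|k|\le L} |k|/N$ type terms against the Fourier weights with logarithmic losses at each stage (or from a Cauchy--Schwarz step producing $\sqrt{\log}$ factors cubed when combined with the third-derivative smoothness). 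I would keep the argument self-contained by proving the second-moment bound first as a lemma, then the perturbed loop equation, and finally the Gronwall-type integration in $t$ to close the estimate; the identification of the limit as $\beta^{-1}\|w\|_{H^{1/2}}^2$ comes out automatically from $\sum_{k\in\Z} |k|\, |\widehat w(k/L)|^2 \cdot \frac{1}{L} \to \int_\R |\xi| |\widehat w(\xi)|^2 d\xi$ by a Riemann-sum argument, up to the factor coming from the normalization of the Fourier transform on $\R$ versus $\T$.
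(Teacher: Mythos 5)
Your starting point (the loop equation plus differentiating $t\mapsto\log\E^\beta_N[e^{t\int w_L d\widetilde\mu_N}]$) is the same as the paper's, but from there you take a genuinely different route: a Fourier-mode decomposition and a hierarchy of loop equations indexed by the modes $e^{\i m\theta}$, with an a priori second-moment bound on $\widetilde\mu_N(e^{\i k\theta})$ fed back into the recursion (essentially the Bekerman--Lodhia strategy cited in the introduction). The paper instead applies the loop equation \emph{once}, with the single test function $g=\U w_L$ (the Hilbert transform), so that after writing $\mu_N=\widetilde\mu_N+N\frac{d\theta}{2\pi}$ the leading term is exactly $-\frac{N\beta}{2}\int w_L\,d\widetilde\mu_N$ and the deterministic term $tN\int g w_L'\frac{dx}{2\pi}=tN\sigma^2(w_L)$ produces the variance (Lemma~\ref{lem:Jo}); no mode-by-mode recursion or Riemann-sum identification is needed beyond Corollary~\ref{cor:variance}. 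The whole difficulty is then concentrated in a single error term $\widetilde\W_N$, and the paper's key ingredient for it is not a loop-equation bootstrap at all but eigenvalue rigidity: a Gaussian tail bound for $\max_\T|h_N|$ with variance $\asymp\log N/\beta$ coming from Su's explicit formula for $\E^\beta_N[e^{\gamma\Psi_N(\theta)}]$ (Lemma~\ref{lem:maxh}), transferred to the tilted measures $\P^\beta_{N,tw_L}$ by Cauchy--Schwarz (Proposition~\ref{prop:rig}), combined with the real-space bound $R_0(w_L)\lesssim L\log L$ of Propositions~\ref{prop:W} and~\ref{prop:R}; this is precisely what produces the threshold $N^{-1}L(\log N)^3\to0$.

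The genuine gap in your plan is the control of the fluctuating quadratic term, which is where all the work lies. You assert that $\E^\beta_N[|\widetilde\mu_N(e^{\i k\theta})|^2]\lesssim\min(|k|,N)/\beta$ ``follows from the loop equation by induction on $|k|$'' and that this suffices to make the centered part of $\widetilde\mu_N(e^{\i m'\theta})\widetilde\mu_N(e^{\i(m-m')\theta})$ negligible. Neither step is justified as stated: (i) the first-order loop equation for mode $m$ expresses a first moment in terms of quadratic quantities, so a second-moment bound requires the second-order equation, whose own error terms involve third and fourth moments -- one needs a genuine quantitative bootstrap of the whole hierarchy (as in Bekerman--Lodhia), not an induction on $|k|$ within a single equation; and (ii) the bound is needed uniformly under the \emph{tilted} measures $\P^\beta_{N,tw_L}$, $t\in[0,1]$, and after multiplying by the prefactor of order $N$ and summing over the $\sim L$ relevant modes against the Fourier weights of $w_L$, a second-moment (i.e.\ square-root cancellation) estimate alone does not obviously give the required $o(N/\log N)$ bound up to $L\sim N/(\log N)^3$ -- you would need either higher-moment/cumulant control or a concentration input playing the role of the paper's Proposition~\ref{prop:rig}. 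So while the route is in principle viable, as written it leaves the decisive estimate unproved, and it omits the one external input (Su's exponential-moment formula, hence rigidity under the biased measure) that the paper uses to close the argument at these scales.
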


The probabilistic interpretation of Theorem~\ref{clt:meso} is that as $N\to+\infty$,
\[
\int w(L\theta) \widetilde{\mu}_N(\d\theta) \to \mathscr{N}\big(0,\tfrac{2}{\beta}\| w\|_{H^{1/2}(\R)}^2\big) 
\]
in law as well as in the sense of moments. The variance of the limiting Gaussian random variable is given by the Sobolev norm:
\begin{equation} \label{var}
\| w\|_{H^{1/2}(\R)}^2 = 2 \int_0^{+\infty} \xi | \widehat{w}(\xi) |^2 d\xi , 
\end{equation}
where $\widehat{w}$ is the Fourier transform of the test function $w$, which is given by
$\displaystyle\widehat{w}(\xi) =\int_\R w(x) e^{-\i x\xi} \frac{dx}{2\pi} $
for $\xi\in\R$.

\medskip

The proof of Theorem~\ref{clt:meso}  is given in section~\ref{sect:CLT}. 
Let us point out that we can actually obtain a  precise control of the error in the asymptotics \eqref{Laplace} 
and that  a straightforward modification of our arguments yields another proof of the CLT for \emph{global} linear statistics.

\begin{theorem} \label{clt:global}
Let $w\in \Co^{3+\alpha}(\T)$ for some $\alpha>0$ be a test function possibly
depending on $N\in\N$ but such that $\|w'\|_{L^1(\R)}$ is fixed. Then there  exists $N_\beta \in \N$ and a $C_{\beta, w}>0$ (which is given e.g. by \eqref{Cw}) such that for all $N\ge N_\beta$, 
\begin{equation} \label{clt}
\left| \log \E^\beta_N\left[ \exp\left( \int w d \widetilde{\mu}_N  \right)\right]
- \beta^{-1} \sigma^2(w)\right| \le C_{\beta,w} \frac{(\log N)^{2}}{ N} , 
\end{equation}
where for any  $f:\T\to\R$ which is sufficiently smooth:  
\begin{equation}  \label{sigma}
\sigma(f)^2 = 2  \sum_{k=1}^{+\infty} k | \widehat{f}_k |^2 ,
\qquad 
\widehat{f}_k = \int_\T f(x) e^{-\i x k} \frac{dx}{2\pi} . 
\end{equation}
\end{theorem}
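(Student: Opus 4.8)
The plan is to prove Theorem~\ref{clt:global} by the loop equation (Dyson--Schwinger / Johansson) method, treating it as a parametrized perturbation argument where the large parameter is $t\in[0,1]$ scaling the test function. For $t\in[0,1]$ set $H_t(w) = \log \E^\beta_N[\exp(t\int w\,d\widetilde\mu_N)]$, so that $H_0=0$ and we want to control $H_1$. First I would differentiate: $H_t'(w) = \E^\beta_{N,t}[\int w\,d\widetilde\mu_N]$, where $\E_{N,t}$ denotes expectation under the tilted measure with density proportional to $e^{t\int w\,d\widetilde\mu_N}\,d\P^\beta_N$. The key observation is that the tilted measure is itself a circular ensemble-type Gibbs measure with an external potential, so one can write a loop equation for it. Concretely, integrating by parts in each variable $\theta_j$ against the density~\eqref{CbN} (using that $\frac{d}{d\theta_j}\log|e^{\i\theta_j}-e^{\i\theta_k}|^\beta = \frac\beta2\cot\frac{\theta_j-\theta_k}{2}$) produces, for every trigonometric test function — it is cleanest to take $e^{\i k\theta}$ for each fixed $k\ge1$ — an identity of the schematic form
\begin{equation} \label{loopeq}
\tfrac\beta2\,\E^\beta_{N,t}\!\Big[\sum_{j\ne l} \tfrac{e^{\i k\theta_j}+\dots}{1-e^{\i(\theta_l-\theta_j)}}\Big] + (\text{terms linear in }\widetilde\mu_N) = \big(1-\tfrac\beta2\big)\,\E^\beta_{N,t}\!\Big[\sum_j \i k\, e^{\i k\theta_j}\Big] + t\,\E^\beta_{N,t}\!\Big[\textstyle\sum_{j,m} \widehat{(w')}_{?}\, e^{\i k\theta_j}\Big],
\end{equation}
and the double-sum with the Cauchy kernel collapses, via the standard trigonometric identity $\sum_{j\ne l}(1-e^{\i(\theta_l-\theta_j)})^{-1}(\cdots)$, to something expressible through $\widehat{\widetilde\mu_N}(m)$ for $m=1,\dots,k$. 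The upshot is a closed hierarchy: writing $a_k(t) = \E^\beta_{N,t}[\widehat{\widetilde\mu_N}(k)] = \E^\beta_{N,t}[\sum_j e^{-\i k\theta_j}]$ for $k\ge1$, one gets a relation expressing $a_k(t)$ in terms of $t\,\widehat{w}_{-m}$, the lower $a_m(t)$, and a \emph{remainder} coming from the fact that~\eqref{loopeq} involves $\E[\widehat{\widetilde\mu_N}(m)\widehat{\widetilde\mu_N}(m')]$ rather than products of expectations — i.e. the variance/covariance of linear statistics.

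The second step is to close the hierarchy by controlling that remainder. This is where I would use a rough a priori bound: for the \Cbeta\ the variance of $\widehat{\widetilde\mu_N}(k)$ is $O(\min(k,N)/\beta)$ (this follows from the $\beta=2$ determinantal computation together with a comparison/log-Sobolev or Nash-type inequality valid for all $\beta$, or more elementarily from a first-round application of the loop equation with $t$ replaced by a Gaussian-smoothing argument). Given such a bound, the covariance terms in~\eqref{loopeq} are lower order in $N$, and one obtains $a_k(t) = \frac{2t}{\beta}\,k\,\widehat{w}_{-k} + \frac{2}{\beta}(1-\tfrac\beta2)(\text{?}) + O(k^2 (\log N)^? / N)$ — the precise constant on the leading term being forced by consistency, and I would double-check it against the known $\beta=2$ / CUE case where $\E[\sum_j e^{-\i k\theta_j}] $ for the tilted CUE is computable exactly. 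Then
\[
H_t'(w) = \E^\beta_{N,t}\Big[\textstyle\sum_j w(\theta_j) - N\widehat w_0\Big] = \sum_{k\ne0} \widehat{w}_k\, a_{-k}(t) \cdot(\text{sign/conjugation bookkeeping}) = \frac{2t}{\beta}\sum_{k\ge1} 2k\,|\widehat w_k|^2 + O\!\Big(\frac{(\log N)^2}{N}\,\|w'\|_{L^1}^?\Big),
\]
using~\eqref{sigma} and that $w$ is real so $\widehat{w}_{-k}=\overline{\widehat{w}_k}$. Integrating from $t=0$ to $t=1$ gives $H_1(w) = \beta^{-1}\sigma^2(w) + O((\log N)^2/N)$, which is exactly~\eqref{clt}; the hypothesis that $w\in\Co^{3+\alpha}$ with $\|w'\|_{L^1}$ fixed is what makes the Fourier sums converge with $N$-uniform constants and controls the truncation of the hierarchy at $k \sim (\log N)$ or so (tail of $\widehat w_k$ decays like $k^{-3-\alpha}$, so cutting at logarithmic $k$ already leaves a negligible remainder, which is where the $(\log N)$ powers enter).

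The main obstacle I expect is the self-referential nature of the loop equation: the error term in the asymptotics for $a_k(t)$ involves the covariance of linear statistics, whose control seems to need the CLT one is trying to prove. The resolution is the standard bootstrap — first extract a crude concentration bound (variance $= O(N/\beta)$, or even just $o(N^2)$) with no effort from a one-shot loop equation or from convexity, feed it back in to get the sharp $O(\min(k,N))$ variance and hence the $O((\log N)^2/N)$ error in the mean, and only then integrate in $t$. A secondary technical point is handling the $(1-\tfrac\beta2)$ ``anomaly'' term (the integration-by-parts Jacobian) carefully: at $\beta=2$ it vanishes and the computation is clean, while for general $\beta$ it contributes to the mean of $\widehat{\widetilde\mu_N}(k)$ at order $O(1)$ but, crucially, not to the leading variance, so it disappears from~\eqref{clt} after the $t$-integration — one must check it does not secretly contribute at order $(\log N)^2/N$, which it does not because it is $t$-independent in the relevant sense and gets absorbed. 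The mesoscopic Theorem~\ref{clt:meso} then follows, as the authors indicate, by taking $w \rightsquigarrow w_L$, noting $\widehat{(w_L)}_k = \widehat w(k/L)/L$ (Riemann-sum), so $\sigma^2(w_L) = 2\sum_{k\ge1} k\,|\widehat w(k/L)|^2/L^2 \to 2\int_0^\infty \xi|\widehat w(\xi)|^2\,d\xi = \|w\|_{H^{1/2}(\R)}^2$, and the error $(\log N)^2/N$ becomes $L(\log N)^?/N \to 0$ under the stated hypothesis $N^{-1}L(\log N)^3\to0$ (with room to spare).
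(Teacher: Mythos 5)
Your skeleton (tilt by $tw$, differentiate $\log$ of the Laplace transform in $t$, insert a loop equation, integrate back over $t\in[0,1]$) is exactly the structure of the paper's Lemma~\ref{lem:Jo}, but the implementation diverges in the step that actually carries the difficulty. The paper does not run a Fourier-mode hierarchy: rather than testing the loop equation against each $e^{\i k\theta}$ and tracking $a_k(t)=\E^\beta_{N,tw}[\widehat{\widetilde\mu}_N(k)]$, it tests against the single function $g=\U w$, the Hilbert transform of $w$. With this choice the quadratic double sum collapses algebraically: writing $\mu_N=\widetilde\mu_N+N\frac{d\theta}{2\pi}$ in \eqref{W1} turns it into $\frac{N\beta}{2}\int \U g\, d\widetilde\mu_N=-\frac{N\beta}{2}\int w\, d\widetilde\mu_N$ plus the same bilinear functional evaluated on $\widetilde\mu_N\otimes\widetilde\mu_N$, and the cross term $\int g w'\frac{dx}{2\pi}$ equals $\sigma^2(w)$ exactly by Parseval and \eqref{UT} --- so the constant in front of $\sigma^2(w)$ is derived, not ``forced by consistency and checked against the CUE,'' which is not a proof. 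The leftover $\widetilde{\W}_N$, quadratic in $\widetilde\mu_N$, is then controlled not by a variance bootstrap but by rigidity under the tilted measure: Su's exact formula for $\E^\beta_N[e^{\gamma\Psi_N}]$ yields $\max_\T|h_N|=O(\log N)$ with overwhelming probability (Lemma~\ref{lem:maxh}), this is transferred to $\P^\beta_{N,tw}$ by Cauchy--Schwarz (Proposition~\ref{prop:LDP}), and the deterministic inequality $|\int f\, d\widetilde\mu_N|\le\|f'\|_{L^1(\T)}\max_\T|h_N|$ bounds both the linear and bilinear error terms (Propositions~\ref{prop:rig} and~\ref{prop:W}); the $(\log N)^2$ in \eqref{clt} is precisely the square of this $O(\log N)$ bound appearing in the bilinear term.

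The genuine gap in your proposal is the closure of the hierarchy. First, the a priori bound $\operatorname{Var}(\widehat{\widetilde\mu}_N(k))=O(\min(k,N)/\beta)$ under the tilted measure is asserted rather than proved: for $\beta\neq2$ there is no determinantal structure, and no comparison or log-Sobolev-type inequality across $\beta$ delivers it; obtaining uniform covariance control for all $\beta$ (and uniformly in $t$) is exactly the hard part, which the paper replaces by the counting-function rigidity of section~4. Second, even granting a crude $O(N)$ variance per mode, the covariance remainder in the mode-$k$ equation is a sum of order $k$ terms each of size $O(N)$, i.e.\ of the same order as the leading term $N\,a_k(t)$, so one pass does not close the hierarchy; an iteration might, but you have not shown it reaches the precision $O((\log N)^2/N)$ with constants uniform in $k$ and $t$. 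Third, truncating the hierarchy at $k\sim\log N$ is quantitatively insufficient for the stated rate: with only $w\in\Co^{3+\alpha}(\T)$ one has $|\widehat w_k|\lesssim k^{-3-\alpha}$, so the discarded tail of $\sigma^2(w)$ (and, worse, of $\sum_k\widehat w_k a_{-k}(t)$ when only trivial bounds on $a_k$ are available beyond the truncation) decays only polynomially in $1/\log N$, far larger than $(\log N)^2/N$. Repairing these points essentially forces either the higher-order loop equation machinery of Bekerman--Lodhia or the paper's route via $g=\U w$ and rigidity.
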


This CLT for  linear statistics of the \Cbeta\ first appeared in  \cite{Johansson88} for general $\beta>0$ and in the work of Diaconis--Shahshahani \cite{DS94} for the classical values of $\beta=1,2,4$. 
In \cite{Johansson88}, Johansson proved a CLT by using a \emph{transportation method} and, if $\beta=2$, he also discovered a connection between \eqref{clt} and the Strong Szeg\H{o} Theorem, see \cite[Chapter 6]{Simon05}. 
Moreover, because of the rich structure of the circular unitary ensemble (CUE), there exist many other different proofs of the CLT when $\beta=2$, we refer e.g.~to the survey~\cite{DIK13}. 
For general $\beta>0$, there is also a proof of the CLT from Webb \cite{Webb16} based on Stein's method which yields a rate of convergence, but not the precise convergence of the Laplace transform in \eqref{clt}.
Our proof relies on the method of \emph{loop equation} which originates in the work of  Johansson \cite{Johansson98} on the fluctuations of $\beta$-ensembles on $\R$. 
More recently, this method has been refined in \cite{BG13, Shcherbina13,BL18} and it has been applied to two--dimensional Coulomb gases in~\cite{BBNY}. 

\medskip

In the mesoscopic regime, to our knowledge, Theorem~\ref{clt:meso} only appeared for $\beta=2$ in a paper of Soshnikov~\cite{Soshnikov00}. 
Soshnikov's method is very different from ours: it relies on the method of moments and it does not yield the convergence of the Laplace transform of a linear statistics as in Theorem~\ref{clt:meso}. 
For $\beta$-ensembles on $\R$, the mesoscopic CLT was first obtained in \cite{BD16, L18} when $\beta=2$. For general $\beta>0$, it was obtained recently by Bekerman and Lodhia \cite{BL18} using a method of moments based on \emph{higher order loop equations}.

\medskip

It is worth pointing out that our method relies crucially on precise \emph{rigidity estimates} for the eigenvalues.  We obtain such estimates by studying the large deviations for the maximum of the \emph{eigenvalue counting function} in section~\ref{sect:LD}. 
In the next section, we present a consequence for concentration of multi--linear eigenvalues statistics which we believe is of interest.

\subsection{Concentration and eigenvalues rigidity}

 For any function $w\in \Co(\T)$, we define a new biased probability measure:
\begin{equation} \label{biasmeasure}
\d\P^\beta_{N,w} = \frac{ \displaystyle e^{\int w d\mu_N}}{\displaystyle  \E^\beta_N \big[ e^{\int w d\mu_N}\big]} \d\P^\beta_{N} . 
\end{equation}

\begin{proposition} \label{prop:rig}
Let $w\in \Co^1(\T)$ and suppose that $\| w'\|_{L^1(\T)} \le \eta$ where $\eta$ is allowed to depend on $N\in\N$.
There exists $N_\beta \in \N$ such that for all fixed $n\in \N$, all $N\ge N_\beta$ and any $R>0$ (possibly depending on $N\in\N$ as well), we have
\[
\P^\beta_{N,w}\left[ 
\sup_{f\in \mathscr{F}_{n,R}}\bigg| \int_{\T^n} f(x_1,\dots, x_n)  \widetilde{\mu}_N(dx_1)\cdots \widetilde{\mu}_N(dx_n)  \bigg| \ge  R(\tfrac{\sqrt{2}}{\beta} \eta \log N)^{n}  \right] 
\le  \sqrt{\eta \log N} N^{1- \eta^2/2\beta}  , 
\]
where $\mathscr{F}_{n,R} = \bigg\{   f\in \Co^n(\T^n)   :   \displaystyle\int_{\T^n} \left| \frac{\d}{\d x_1} \cdots \frac{\d}{\d x_n} f(x_1,\dots, x_n) \right| dx_1 \cdots dx_n  \le R  \bigg\}$. 
\end{proposition}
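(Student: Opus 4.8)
The plan is to reduce the multilinear statement to a one-dimensional concentration estimate for the eigenvalue counting function and then bootstrap. Write $N_\theta = \#\{j : \theta_j \le \theta\}$ and $\widetilde N_\theta = N_\theta - N\theta/2\pi$, so that $\widetilde\mu_N$ is the Stieltjes measure $d\widetilde N_\theta$. By repeated integration by parts, for $f \in \mathscr{F}_{n,R}$ one can write
\[
\int_{\T^n} f \, d\widetilde\mu_N(x_1)\cdots d\widetilde\mu_N(x_n)
= (-1)^n \int_{\T^n} \Big(\tfrac{\partial}{\partial x_1}\cdots\tfrac{\partial}{\partial x_n} f\Big)(x_1,\dots,x_n)\, \widetilde N_{x_1}\cdots \widetilde N_{x_n}\, dx_1\cdots dx_n ,
\]
up to boundary terms which on the torus either vanish or are controlled the same way. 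Taking absolute values and using the defining bound on the mixed derivative of $f$, the left side is at most $R \cdot \big(\sup_{\theta\in\T}|\widetilde N_\theta|\big)^n$. Hence the event in the proposition is contained in the event $\big\{\sup_\theta |\widetilde N_\theta| \ge \tfrac{\sqrt 2}{\beta}\eta\log N\big\}$, and it suffices to prove a large-deviation bound for the maximum of the counting function under the biased measure $\P^\beta_{N,w}$, namely
\[
\P^\beta_{N,w}\Big[\sup_{\theta\in\T}|\widetilde N_\theta| \ge t \log N\Big] \le \sqrt{t\log N}\, N^{1 - t^2\beta/2}
\]
for $t = \tfrac{\sqrt 2}{\beta}\eta$, which makes the exponent $1 - \eta^2/(2\beta)$ as claimed.

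To get the one-dimensional estimate, I would exploit the explicit form of the joint density \eqref{CbN}. The key identity is that for a fixed arc $I_\theta = (0,\theta)$, the counting function $N_\theta$ is itself a linear statistic, $N_\theta = \int \mathbf 1_{I_\theta}\, d\mu_N$, and one can estimate its exponential moments directly: $\E^\beta_N[e^{s N_\theta}]$ is, after conditioning or by a Jensen/convexity argument on the Vandermonde factor, bounded in terms of a Selberg-type integral. The cleanest route is to use the loop-equation machinery already developed in the paper together with the fact that a step function is a limit of smooth functions — but step functions are not $\Co^{3+\alpha}$, so instead I would directly estimate $\E^\beta_N[e^{s\widetilde N_\theta}]$ by comparing to the $\beta=0$ (i.i.d.\ uniform) case and showing the Coulomb interaction only helps: the repulsion makes the counting function more rigid, so $\E^\beta_N[e^{s\widetilde N_\theta}] \le \E^0_N[e^{s\widetilde N_\theta}] \le e^{Cs^2}$ for $|s|$ up to order $\log N$, with constant $C$ comparable to $1/\beta$ after optimizing. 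Passing to the biased measure $\P^\beta_{N,w}$ costs only a factor $\E^\beta_N[e^{\int w\,d\mu_N}]^{-1} e^{\int w\, d\mu_N}$; since $\|w'\|_{L^1} \le \eta$, integration by parts bounds $|\int w\, d\widetilde\mu_N| \le \eta \sup_\theta|\widetilde N_\theta|$, and the $\log N$-size deviation absorbs this perturbation with at most a harmless adjustment of constants.

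Then I would upgrade the pointwise tail bound to a uniform-in-$\theta$ bound by a standard chaining/union-bound argument: discretize $\T$ into $O(N^2)$ points, control $\sup_\theta|\widetilde N_\theta|$ by the max over the net plus an $O(1)$ fluctuation between net points (since $N_\theta$ increases by $1$ at each eigenvalue and $\widetilde N$ has deterministic drift $N/2\pi$ per unit, the oscillation on an interval of length $N^{-2}$ is $O(1)$ with the relevant probability), and pay a union-bound factor $N^2$ which is $e^{2\log N}$ and is comfortably beaten by the Gaussian-type exponent $e^{-t^2\beta\log N/2}$ once $t$ is of the stated order; the residual polynomial prefactor is what produces the $\sqrt{\eta\log N}$. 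The main obstacle I anticipate is making the comparison $\E^\beta_N[e^{s\widetilde N_\theta}] \lesssim e^{Cs^2/\beta}$ rigorous and with the \emph{sharp} constant needed to land exactly on the exponent $1-\eta^2/(2\beta)$: one has to be careful because for $s$ growing like $\log N$ the naive subgaussian bound is not automatic, and the correct constant comes from the exact variance $\operatorname{Var}^\beta_N(N_\theta) \sim \tfrac{2}{\beta}\log N$ of the counting function rather than from a crude comparison. I expect this to be handled via the rigidity/large-deviation analysis of the counting function announced for section~\ref{sect:LD}, which presumably furnishes exactly this estimate, so in the final write-up this proposition should follow from that section plus the integration-by-parts reduction above.
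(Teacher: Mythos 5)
Your reduction is exactly the paper's: integrate by parts against the centered counting function $h_N$ (the paper's \eqref{h}, your $\widetilde N_\theta$), bound the multilinear statistic by $R(\max_\T|h_N|)^n$, and reduce to a large-deviation bound for $\max_\T|h_N|$ under the tilted measure, which is precisely Proposition~\ref{prop:LDP}. The gap is in how you propose to get the one-point exponential moment bound that drives everything. The comparison ``$\E^\beta_N[e^{s\widetilde N_\theta}]\le \E^0_N[e^{s\widetilde N_\theta}]\le e^{Cs^2}$'' fails on two counts: no such monotonicity in $\beta$ is established (negative-association arguments are available at $\beta=2$ via determinantal structure, not for general $\beta$), and, more fatally, the right-hand side is false as stated --- for i.i.d.\ uniform points $\widetilde N_\theta$ is a centered binomial with variance of order $N$, so $\E^0_N[e^{s\widetilde N_\theta}]\asymp e^{cs^2N}$, which is useless at the scale $s\sim\log N$ you need. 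The whole point is that the $\beta$-ensemble counting function has variance $\asymp \beta^{-1}\log N$, and the sharp constant in the exponent $1-\eta^2/2\beta$ requires a subgaussian bound with exactly that variance proxy. You acknowledge this at the end and defer to ``the rigidity/large-deviation analysis of section~\ref{sect:LD}'' --- but that analysis \emph{is} the content of this proposition's proof, so the deferral is circular. The paper supplies the missing input from Su's exact formula \eqref{ZG1} for $\E^\beta_N[e^{\gamma\Psi_N(\theta)}]$ (a product of Gamma factors), which yields $\E^\beta_N[e^{\gamma\Psi_N(\theta)}]\le e^{c_\beta\gamma^2}N^{\gamma^2/2\beta}$ (see \eqref{exp_moment}), transfers it to $h_N$ via the identity \eqref{eigcount}, and gets uniformity in $\theta$ from a union bound over $N$ lattice points using the deterministic one-sided monotonicity bound \eqref{UB8} (Lemma~\ref{lem:maxh}); your $O(N^2)$-net-plus-$O(1)$-oscillation step is also not quite deterministic as stated, since many eigenvalues can fall in a tiny arc.

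A secondary imprecision: the passage to $\P^\beta_{N,w}$ is not a ``harmless adjustment of constants'' when $\eta$ is allowed to grow with $N$ (as it does in the GMC applications, where $\eta\asymp\log N$). The paper handles it by bounding the normalizing constant below by $1$ via Jensen, bounding $\E^\beta_N[e^{2\int w\,d\widetilde\mu_N}]$ above by integrating the Gaussian tail of $\max_\T|h_N|$ (this is where $\|w'\|_{L^1}\le\eta$ enters, exactly as in your pointwise bound $|\int w\,d\widetilde\mu_N|\le\eta\max_\T|h_N|$), and then applying Cauchy--Schwarz; the balance between the resulting factor $N^{1+\eta^2/4\beta}$ and the unbiased tail $N^{1-2\eta^2/\beta}$ at level $\tfrac{\sqrt2}{\beta}\eta\log N$ is what produces both the prefactor $\sqrt{\eta\log N}$ and the exponent $1-\eta^2/2\beta$. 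So while your outline has the correct architecture, the two quantitative pillars --- the sharp one-point exponential moment and the quantitative change of measure --- are missing or wrong as proposed.
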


The proof of Proposition~\ref{prop:rig} will be given in section~\ref{sect:LD}.
Moreover, we immediately deduce from Lemma~\ref{lem:maxh} below, the following large deviation estimates:  for any $N\in N_\beta$ and  any $R>0$ $($possibly depending on  $N)$, we have
\begin{equation} \label{rig}
\P_N^\beta\left[  \big| \theta_k - \tfrac{2\pi k}{N} \big| \le \frac{2\pi R}{N} ; k=1,\dots , N \right] \ge 1-  3 N e^{- \frac{\beta R^2}{\log N}}.
\end{equation}
This means that the eigenvalues of the \Cbeta\ are close to being equally spaced with overwhelming probability -- this property  is usually called \emph{eigenvalues rigidity}.
In the next section, we obtain optimal rigidity estimates in the sense that we find the leading order of the maximal fluctuations of  $\theta_k$  with the correct constant -- see Corollary~\ref{cor:rig} below. 
These optimal  estimates are obtained through a connection between the characteristic polynomial of the circular $\beta$--ensembles and the theory of Gaussian multiplicative chaos (GMC) that we recall in the next section. 

\subsection{Subcritical Gaussian multiplicative chaos} \label{sect:charpoly}

In this section, we discuss applications of Theorem~\ref{clt:meso} within GMC theory. 
Let $\D=\{z\in\C : |z|<1\}$ and let $P_N$ be the characteristic polynomial of the  \Cbeta, that is for any $N\in\N$, we define
\[
P_N(z) = \textstyle{ \prod_{j=1}^N (1-z e^{-\i\theta_j}) } , \qquad z\in \overline{\D}. 
\]
Our goal is to investigate the asymptotic behavior of $P_N(z)$ for $|z|=1$ as a random function. 
First, let us observe that for any $0<r<1$ and $\vartheta\in\T$, 
\begin{equation} 
\log|P_N(r e^{\i\vartheta})| = - {\textstyle\sum_{j=1}^N}\phi_{r}(\theta_j-\vartheta) ,
\qquad \phi_r(\theta) = \log|1- r e^{\i\theta}|^{-1} . 
\end{equation} 
Hence, $\log|P_N(r e^{\i\vartheta})|$ is a linear statistic and  it follows from Theorem~\ref{clt:global} that 
\begin{equation*}
\log|P_N(z)| \to  \sqrt{\frac{2}{\beta}} \G(z)
\end{equation*}
in the sense of finite dimensional distribution as $N\to+\infty$, where $\G$ is a centered Gaussian process defined on $\D$ with covariance structure:
\begin{equation} \label{Gcov}
 \E\left[\G(z)\G(z') \right]  =   \frac 12 \log\left|1-\overline{z}z' \right|^{-1} , \qquad z,z'\in\D. 
 \end{equation}
We can define the boundary values of the Gaussian process $\G$ as a random generalized function on~ $\T$ and according to formula \eqref{Gcov}, this random field, which is still denoted by  $\G$, is a log--correlated Gaussian process. This process has the same law as $\sqrt{\pi/2}$ times the restriction of the two-dimensional Gaussian free field on $\T$, see \cite[Proposition 1.4]{DRSV17}, so we call it the GFF on $\T$. 
Moreover, one can show that the function $\vartheta \in\T \mapsto \log|P_N(e^{\i\vartheta})|$ converges in law to the random generalized function $\G$ in the Sobolev space $H^{-\delta}(\T)$ for any $\delta>0$, see \cite{HKO01}. 

\medskip

Log--correlated fields form a class of stochastic processes which describe the fluctuations of key observables in many different models related to two--dimensional random geometry, turbulence, finance, etc. 
One of the key universal features of log--correlated fields is their so--called multi-fractal spectrum 
which can be encoded by a family of random measures called GMC measures. 
Within GMC theory, these measures correspond to the exponential of a log--correlated field which is defined by a suitable renormalization procedure. 
For instance, using the results of \cite{RV10} or \cite{Berestycki17}, it is possible to define\footnote{There exist other equivalent ways to define the GMC measures $\mu^\gamma_\G$  that we do not discuss here. We refer to \cite{RV14} for a comprehensive survey of GMC theory.}
\begin{equation} \label{gmc0}
\mu^\gamma_\G(\d\vartheta) = \lim_{r\to1} \frac{e^{\gamma \G(r e^{\i\vartheta})}}{\E e^{\gamma \G(r e^{\i\vartheta})}}  \d\vartheta  . 
\end{equation}
The measure $\mu^\gamma_\G$ exists for all $\gamma\ge 0$, it is continuous in the parameter $\gamma$ and it is non-zero if and only if $\gamma<2$ -- this is called the \emph{subcritical regime}\footnote{Because of the factor $\frac 12$ in formula \eqref{Gcov}, with our conventions,  the critical value is $\gamma_*= 2$.}. 
The random measure $\mu^\gamma_\G$ is supported on the set of \emph{$\gamma$--thick points}: 
\begin{equation} \label{TP1}
\bigcap_{0\le \alpha < \gamma} \left\{ \theta \in \T : \liminf_{r\to1} \frac{\G(r e^{\i \theta})}{\log|1-r^2|^{-1}} \ge \frac{\alpha}{2}  \right\} . 
\end{equation}
This set is known to have fractal dimension $(1-\gamma^2/4)_+$. 
In particular, if $\gamma_* = 2$ is the critical value,  the fact that the measure $\mu^\gamma_\G$ is non-zero if and only if $\gamma<\gamma_*$ implies that  in probability:
\begin{equation} \label{max0}
\lim_{r\to1} \frac{\max_{\theta \in \T}\G(r e^{\i \theta})}{\frac 12 \log|1-r^2|^{-1}}  = \gamma_*. 
\end{equation}
For a non Gaussian log--correlated field, it is also possible to construct its GMC measures in the subcritical regime. This has been used to describe the asymptotics of powers of the absolute value of the characteristic polynomials of certain ensembles of random matrices, see e.g. Webb and co-authors \cite{Webb15, BWW18} for an application to the circular unitary ensemble $(\beta=2)$, and to a class of Hermitian random matrices, in the so-called $L^2$--regime. Based on the approach from Berestycki \cite{Berestycki17}, a general construction scheme which covers the whole subcritical regime was given in \cite{LOS18} and then refined in our recent work \cite{CFLW}. 
This method has been applied to (unitary invariant) Hermitian random matrices \cite{CFLW}, as well  as to the characteristic polynomial of the Ginibre ensemble \cite{L19}.
A similar approach has also been applied to study the Riemann $\zeta$ function \cite{SW} and cover times of planar Brownian motion~\cite{Jego}.  
Using the method from \cite{LOS18} and relying on the determinantal structure of the circular ensemble  when $\beta=2$ to obtain the necessary asymptotics, Nikula--Saksman--Webb proved in \cite[Theorem~1.1]{NSW} that  for any $0\le \gamma <2$, 
\begin{equation} \label{cuegmc}
\frac{|P_N(e^{\i\vartheta})|^\gamma}{\E_N^2\left[ |P_N(e^{\i\vartheta})|^\gamma\right]} \d\vartheta \to \mu^\gamma_\G(\d\vartheta)
\end{equation}
in distribution as $N\to+\infty$.
It is a very interesting and challenging problem to generalize \eqref{cuegmc} to all $\beta>0$.
In the following, we provide the first step in this direction which consists in  constructing the GMC measures associated with a small mesoscopic regularization of the characteristic polynomial $P_N$.  
Namely, by adapting the proof of Theorem~\ref{clt:meso}, we are able to obtain the following result:

\begin{theorem} \label{thm:gmc1}
Let $r_N = 1- \frac{(\log N)^6}{N}$ and, by analogy with \eqref{gmc0},  define the random measure for any $\gamma \in\R$, 
\begin{equation} \label{gmc1}
\mu_N^\gamma(d \theta) = \frac{|P_N(r_N e^{\i\theta})|^\gamma}{\E_N^\beta\left[ |P_N(r_N e^{\i\theta})|^\gamma \right]} \frac{d\theta}{2\pi} .
\end{equation}
For any $|\gamma| \le \sqrt{2\beta}$ (i.e.~in the subcritical regime), $\mu_N^\gamma$ converges in law as $N\to+\infty$ to a GMC measure $\mu_\G^{\breve\gamma}$ associated to the GFF on $\T$ with parameter $\breve{\gamma} = \gamma  \sqrt{\frac{2}{\beta}}$. 
\end{theorem}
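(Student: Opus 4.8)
The plan is to establish convergence of $\mu_N^\gamma$ to the GMC measure $\mu_\G^{\breve\gamma}$ by verifying the abstract criterion of \cite{LOS18} (in the refined form of \cite{CFLW}), which reduces the problem to three ingredients: (i) a mesoscopic CLT for the field $\vartheta\mapsto\log|P_N(r_N e^{\i\vartheta})|$ with the right log-correlated covariance structure; (ii) uniform exponential moment bounds (a one-point/$L^q$ estimate) for $|P_N(r_N e^{\i\vartheta})|^\gamma$ ensuring the sequence of measures is tight and uniformly integrable; and (iii) a comparison/approximation step showing that the mesoscopically regularized field $\log|P_N(r_N e^{\i\cdot})|$ is close, in the appropriate sense, to a mollification at scale $1-r_N\sim(\log N)^6/N$ of a genuine log-correlated Gaussian field. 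Since $r_N$ is chosen at a small mesoscopic scale, all of these will be accessed through Theorem~\ref{clt:meso} and its proof rather than through any determinantal structure, which is exactly what makes the argument work for all $\beta>0$.

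First I would record that $\log|P_N(r_N e^{\i\vartheta})| = -\sum_{j}\phi_{r_N}(\theta_j-\vartheta)$ is a linear statistic whose test function $\phi_{r_N}$ is, up to a harmless smooth remainder, a bump at scale $L=L(N)=(1-r_N)^{-1}\sim N/(\log N)^6$, so that $N^{-1}L(N)(\log N)^3\to 0$ and Theorem~\ref{clt:meso} applies. Applying it to linear combinations $\sum_i t_i\,\phi_{r_N}(\cdot-\vartheta_i)$ (and to the analogous statistic built from $\arg$, i.e. the imaginary part) gives joint Gaussian limits; the key computation is that the limiting covariance, extracted from the $H^{1/2}$-norm in \eqref{var}, matches $\frac2\beta$ times the covariance \eqref{Gcov} of $\G$ smoothed at scale $1-r_N$. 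This identifies the Gaussian parameter as $\breve\gamma=\gamma\sqrt{2/\beta}$ and pins down the normalization $\E_N^\beta[|P_N(r_N e^{\i\theta})|^\gamma]$ to leading order as $(1-r_N^2)^{-\gamma^2/2\beta}=(1-r_N^2)^{-\breve\gamma^2/4}$, consistent with the GMC renormalization in \eqref{gmc0}. The subcriticality constraint $|\gamma|\le\sqrt{2\beta}$ is precisely $\breve\gamma\le 2=\gamma_*$.

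For the uniform integrability and tightness input, I would use the biased measure $\P^\beta_{N,w}$ with $w=\gamma\phi_{r_N}$ together with the rigidity estimates behind Proposition~\ref{prop:rig} and \eqref{rig}: these control the counting function uniformly and let one bound $\E_N^\beta[|P_N(r_N e^{\i\vartheta})|^{q\gamma}]$ by the Gaussian prediction with an error that is subexponential in $\log N$, for $q$ slightly larger than $1$ in the range $|\gamma|\le\sqrt{2\beta}$. Combined with a Kolmogorov-type continuity estimate for the two-point function (again from Theorem~\ref{clt:meso} applied to differences), this gives the uniform $L^q$-boundedness of the densities in \eqref{gmc1} needed to upgrade finite-dimensional convergence to convergence in law of the measures, via the standard GMC machinery. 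The approximation step (iii) is then the statement that, on the event of rigidity, $-\sum_j\phi_{r_N}(\theta_j-\vartheta)$ plus its Gaussian counterpart differ by a field that is asymptotically negligible in the relevant Sobolev/variance sense — this follows by comparing the loop-equation expansion used to prove Theorem~\ref{clt:meso} against the exact Gaussian computation.

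The main obstacle, I expect, is not the Gaussian limit itself but obtaining the exponential moment bound \emph{uniformly in $\vartheta$} and up to the edge of the subcritical window $|\gamma|=\sqrt{2\beta}$: near criticality the naive second-moment method fails, and one must either exploit the convexity of the log-gas (the biased measure $\P^\beta_{N,w}$ remains a log-gas with a bounded perturbation, so FKG- or Brascamp–Lieb-type concentration applies) or run a truncated/Seiberg-type bound restricting to the complement of the thick-point set, as in \cite{LOS18,CFLW}. Controlling the error terms in the loop equation \emph{uniformly over the translates} $\vartheta\in\T$, rather than for a single test function, is the other delicate point; this is where the precise $(\log N)^6$ choice of scale is used, to leave enough room between the regularization scale and $N$ for the loop-equation remainder to be summed against the bump function with a uniform constant. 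Once these uniform estimates are in place, assembling the three ingredients into convergence in law of $\mu_N^\gamma\to\mu_\G^{\breve\gamma}$ is routine given \cite{LOS18}.
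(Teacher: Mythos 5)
Your overall skeleton (reduce to the criterion of \cite{LOS18} and feed it exponential--moment asymptotics obtained from the loop--equation machinery at the scale $1-r_N=(\log N)^6/N$) is the same as the paper's, but the central step as you describe it would fail. Theorem~\ref{clt:meso} does \emph{not} apply to $\phi_{r_N}(\theta)=\log|1-r_Ne^{\i\theta}|^{-1}$: this is not a rescaled $\Co^{3+\alpha}_c$ bump ``up to a harmless smooth remainder''. Its sup--norm is $\log\frac{1}{1-r_N}\sim\log N$, its $H^{1/2}$--type norm diverges, and $\E^\beta_N[e^{\gamma\int\phi_{r_N}d\widetilde\mu_N}]$ grows like $N^{\gamma^2/2\beta}$, so there is no bounded--variance CLT to invoke; what is needed is the mod--Gaussian asymptotics \eqref{modG1} with a divergent variance and an error tending to zero. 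The paper gets this by re-running Lemma~\ref{lem:Jo} and Proposition~\ref{prop:W} for the specific test functions $w=\sum_\ell\gamma_\ell\phi_{r_\ell}(\cdot-\vartheta_\ell)$, $g=\sum_\ell\gamma_\ell\psi_{r_\ell}(\cdot-\vartheta_\ell)$, and the genuine technical content is the new estimates of Lemmas~\ref{lem:phi1} and~\ref{lem:phi2} — in particular the bound $R_0(\phi_r),R_0(\psi_r)\lesssim(\log(1-r))^2/\big((1-r)\log\log(1-r)^{-1}\big)$ for the term \eqref{R0} — which, with $1-r_N=(\log N)^6/N$, make $\delta_N\to0$ (only at rate $1/\log\log N$). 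Your proposal does not identify this step. Relatedly, the covariance is not ``extracted from the $H^{1/2}$-norm'': for distinct macroscopic centers $\vartheta_\ell\neq\vartheta_{\ell'}$ the order--one cross covariances $\tfrac12\log|1-r_\ell r_{\ell'}e^{\i(\vartheta_{\ell'}-\vartheta_\ell)}|^{-1}$, which build the off--diagonal structure of \eqref{Gcov}, would be lost in a mesoscopic $H^{1/2}$ limit; the paper obtains them from the exact Fourier computation of $\sigma^2(w)$.

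The second gap is in what you plan to verify versus what \cite[Theorem 1.7]{LOS18} actually requires. That criterion asks only for the joint exponential--moment asymptotics \eqref{modG1}, uniformly over \emph{all} radii $0<r_1,\dots,r_n\le r_N$ (not just the single scale $r_N$), all positions, and $\boldsymbol\gamma$ in compacts; this multi--scale uniformity, essential for the Berestycki-type construction inside \cite{LOS18}, is absent from your plan. Conversely, your ingredients (ii) and (iii) — separate $L^q$/tightness bounds, Kolmogorov continuity, FKG or Brascamp--Lieb concentration, thick--point truncations near $|\gamma|=\sqrt{2\beta}$, and above all a field--level comparison of $\log|P_N(r_Ne^{\i\cdot})|$ with a mollified Gaussian field — are not needed (that work is packaged inside \cite{LOS18}, which covers the whole subcritical regime as a black box once \eqref{modG1} holds), and the proposed Gaussian comparison (iii) is a substantially stronger statement than anything proved or needed here, with no indicated proof. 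Finally, uniformity over translates $\vartheta\in\T$ is immediate from rotation invariance of the estimates; the delicate point is the logarithmic singularity of $\phi_r$ at scale $1-r$, not the translates.
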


We can also obtain an analogous result for the imaginary part of the logarithm of the characteristic polynomial of the \Cbeta. Let
\begin{equation} \label{imcharpoly}
\Psi_{N,r}(\vartheta) =  \textstyle{ \sum_{j=1}^N } \Im\log(1-r e^{\i(\vartheta-\theta_j)}) , \qquad  r \in [0,1), \quad \vartheta\in\T, 
\end{equation}
where $\log(\cdot)$ denotes the principle branch\footnote{This is the usual convention used  e.g. in \cite{HKO01, ABB17, CMN18, NSW}.} of the logarithm so that the function $\Im\log(1-z)$ is analytic for $z\in\D$. We also let
\begin{equation} \label{Psi}
\Psi_N(\vartheta) = \lim_{r\to 1} \Psi_{N,r}(\vartheta)  =  \textstyle{ \sum_{j=1}^N } \psi(\vartheta - \theta_j) 
, \qquad  \vartheta\in\T, 
\end{equation}
where  $\psi(\theta) =  \Im \log(1-e^{\i\theta}) = \frac{\theta-\pi}{2} $ for all $\theta \in (0, 2\pi)$.

\begin{theorem} \label{thm:gmc2}
Let $r_N = 1- \frac{(\log N)^4}{N}$ and define the random measure for any $\gamma \in\R$, 
\begin{equation} \label{gmc2}
\widetilde{\mu}_N^\gamma(d \theta) = \frac{e^{\gamma \Psi_{N,r_N}(\theta)}}{\E_N^\beta\big[e^{\gamma \Psi_{N,r_N}(\theta)}\big]} \frac{d\theta}{2\pi} .
\end{equation}
For any $|\gamma| \le \sqrt{2\beta}$, $\widetilde{\mu}_N^\gamma$ converges in law as $N\to+\infty$ to a GMC measure $\mu_\G^{\breve\gamma}$  with parameter $\breve{\gamma} = \gamma  \sqrt{\frac{2}{\beta}}$. 
\end{theorem}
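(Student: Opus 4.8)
The plan is to establish Theorem~\ref{thm:gmc2} along the same lines as Theorem~\ref{thm:gmc1}, replacing the linear statistic $\log|P_N(r_N e^{\i\cdot})|$ by $\Psi_{N,r_N}$ and checking the hypotheses of the general Gaussian multiplicative chaos (GMC) convergence criterion of \cite{LOS18}. The key point is that $\Psi_{N,r}$ is itself a centred linear statistic: since $\psi_r(\theta):=\Im\log(1-re^{\i\theta}) = -\sum_{k\ge1}\tfrac{r^k}{k}\sin(k\theta)$ has zero average on $\T$, one has $\Psi_{N,r}(\vartheta) = \int\psi_r(\vartheta-\cdot)\,\d\widetilde\mu_N$, with $\psi_r\to\psi$ as $r\to1$, $\psi$ being the sawtooth of \eqref{Psi}. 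Hence, for base points $\vartheta_1,\dots,\vartheta_m\in\T$ and $\gamma\in\R$, the joint exponential moment $\E_N^\beta\big[\exp\big(\gamma\sum_i\Psi_{N,r_N}(\vartheta_i)\big)\big]$ is the Laplace transform of $\int w\,\d\widetilde\mu_N$ with $w = \gamma\sum_i\psi_{r_N}(\vartheta_i-\cdot)$. This test function depends on $N$ through the transition scale $1-r_N = (\log N)^4/N$ and so is not covered by the literal statements of Theorems~\ref{clt:meso} and~\ref{clt:global}; however, the loop--equation analysis underlying them — together with the quantitative error control announced right after Theorem~\ref{clt:meso} — applies directly to such superpositions of smoothed--jump profiles, exactly as, in the proof of Theorem~\ref{thm:gmc1}, it applies to superpositions of the profiles $\phi_{r_N}(\vartheta_i-\cdot) = \log|1-r_N e^{\i(\vartheta_i-\cdot)}|^{-1}$. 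The relevant scale is $L\sim(1-r_N)^{-1} = N(\log N)^{-4}$, which is admissible since $N^{-1}L(\log N)^3\sim(\log N)^{-1}\to0$.

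I would then identify the Gaussian limit. A direct computation with \eqref{sigma} — using that the $k$th Fourier coefficient of $\psi_{r_N}$ has modulus $r_N^{|k|}/(2|k|)$ for $k\ne0$ — gives $\sigma(w)^2 = \tfrac{\gamma^2}{2}\sum_{i,j}\log|1-r_N^2 e^{\i(\vartheta_i-\vartheta_j)}|^{-1}$, hence
\[
\E_N^\beta\Big[\exp\Big(\gamma{\textstyle\sum_i}\Psi_{N,r_N}(\vartheta_i)\Big)\Big]\ \longrightarrow\ \exp\Big(\tfrac{\gamma^2}{2\beta}{\textstyle\sum_{i,j}}\log\big|1-r_N^2 e^{\i(\vartheta_i-\vartheta_j)}\big|^{-1}\Big).
\]
Equivalently $\tfrac\beta2\,\mathrm{Cov}_N^\beta\big(\Psi_{N,r_N}(\vartheta),\Psi_{N,r_N}(\vartheta')\big) = \E\big[\G(r_N e^{\i\vartheta})\G(r_N e^{\i\vartheta'})\big]+o(1)$ by \eqref{Gcov}, so $\sqrt{\beta/2}\,\Psi_{N,r_N}$ behaves like the GFF $\G$ on $\T$ regularized at scale $1-r_N$, and $e^{\gamma\Psi_{N,r_N}} = e^{\breve\gamma\cdot\sqrt{\beta/2}\,\Psi_{N,r_N}}$ with $\breve\gamma = \gamma\sqrt{2/\beta}$ and $\tfrac{\gamma^2}{2\beta} = \tfrac{\breve\gamma^2}{4}$, which explains the parameter of the limiting measure. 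The subcriticality condition $|\breve\gamma|<\gamma_*=2$ reads $|\gamma|<\sqrt{2\beta}$; the endpoint $|\gamma|=\sqrt{2\beta}$, where $\breve\gamma=2$ is critical, is absorbed into the same scheme and yields the degenerate limit $\mu_\G^2=0$, and $\gamma<0$ reduces to $\gamma>0$ by the symmetry $\theta_j\mapsto 2\pi-\theta_j$ of \eqref{CbN}, which sends $(\Psi_{N,r}(\vartheta))_{\vartheta\in\T}$ to $(-\Psi_{N,r}(-\vartheta))_{\vartheta\in\T}$ in law.

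With these inputs I would run the truncation argument of Berestycki \cite{Berestycki17} in the quantitative form of \cite{LOS18}: (i) introduce the good event on which $\Psi_{N,r_N}$ stays, on a fine net of base points, below a level of order $\log N$ chosen strictly between the level $\tfrac\gamma\beta\log N$ favoured by the biased measure $\widetilde\mu_N^\gamma$ and the typical maximal level $\sqrt{2/\beta}\,\log N$ of the field, using a priori tail and maximum bounds for $\Psi_{N,r_N}$ from the large--deviation estimates of Section~\ref{sect:LD} and Proposition~\ref{prop:rig} applied to $\psi_{r_N}(\vartheta-\cdot)$; (ii) compute $\E_N^\beta[\widetilde\mu_N^\gamma(A)]$ and the second moment of the truncated measure using Steps~1--2 with $w=\gamma\psi_{r_N}(\vartheta-\cdot)$ and $w=\gamma\big(\psi_{r_N}(\vartheta-\cdot)+\psi_{r_N}(\vartheta'-\cdot)\big)$, and verify convergence to the corresponding (truncated) moments of $\mu_\G^{\breve\gamma}$, which together with uniform integrability gives convergence in law of the truncated measures; (iii) bound the $L^1$ contribution of the complement of the good event using the exponential--moment estimates and $|\breve\gamma|<2$; (iv) conclude $\widetilde\mu_N^\gamma\to\mu_\G^{\breve\gamma}$ in law.

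The main obstacle — as for Theorem~\ref{thm:gmc1} — will be to obtain the mesoscopic CLT of Steps~1--2 with errors that are \emph{uniform} in the base points $\vartheta,\vartheta'$, including the near--diagonal regime $|\vartheta-\vartheta'|\asymp 1-r_N$ where the two singular profiles merge, and uniform in $\gamma$ up to the threshold $\sqrt{2\beta}$, since these errors must survive the integration over $\vartheta,\vartheta'\in\T$ in the moment computations; this is precisely where the quantitative error bound behind Theorem~\ref{clt:meso}, fed with local--scale derivative estimates on $\psi_{r_N}$, is needed rather than the bare limiting statement. A further point is that the limit profile $\psi$ has a jump discontinuity rather than a logarithmic singularity, so one must track how the constants in the loop--equation estimates depend on the smoothing scale $1-r_N$; in fact this makes the present analysis milder than for the real part — $\psi$ is bounded and $\|\psi_{r_N}'\|_{L^1(\T)}=O(1)$, whereas $\|\phi_{r_N}'\|_{L^1(\T)}$ grows like $\log N$ — which is why $r_N$ can here be pushed all the way to $1-(\log N)^4/N$, closer to the unit circle than in Theorem~\ref{thm:gmc1}.
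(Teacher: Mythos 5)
Your proposal follows essentially the same route as the paper: write $\Psi_{N,r_N}(\vartheta)=\sum_j\psi_{r_N}(\vartheta-\theta_j)$ as a (mesoscopic) linear statistic and run the loop--equation machinery (Lemma~\ref{lem:Jo}, Proposition~\ref{prop:W} together with the $\psi_r$--estimates of Lemmas~\ref{lem:phi1}--\ref{lem:phi2}) to get the uniform joint Laplace--transform asymptotics \eqref{modG2} with limiting variance \eqref{sigmaG}, exploiting exactly as the paper does that $\|\psi_{r}'\|_{L^1(\T)}=O(1)$ permits the smaller cutoff $1-r_N=(\log N)^4/N$. The only real difference is that the paper then cites \cite[Theorem~1.7]{LOS18} as a black box, whereas you sketch re-deriving that GMC convergence criterion via Berestycki's truncation scheme, which is the same underlying argument.
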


It is known that the supports of the random measures $\mu_N^\gamma$ correspond to the \emph{thick points} of the characteristic polynomial $P_N$, see e.g. \cite[section~3]{CFLW}. 
By analogy with \eqref{TP1}, these  \emph{thick points} are the atypical points where $|P_N|$ takes extremely large values. 
Concretely, for any $\gamma>0$, we say that $\theta \in \T$ is a $\gamma$--thick if the value of $\log  | P_N(e^{\i\theta})|$ is at least
$ \gamma \E^\beta_N \left[(\log  | P_N(e^{\i\theta})|)^2 \right] = \frac{\gamma}{\beta} \log N +\O(1)$. 
In section~\ref{sect:TP}, we show how to deduce from Theorem~\ref{thm:gmc1} and the asymptotics from \cite[Theorem 1.2]{Su09}  that the size of the sets of   \emph{thick points} are given  according to  the predictions of log--correlated Gaussian fields.

\begin{proposition} \label{prop:TP1}
For any $\gamma>0$, let 
\begin{equation} \label{TP2}
\mathscr{T}^\gamma_N = \left\{ \theta \in \T :  | P_N(e^{\i\theta})|   \ge N^{\gamma/\beta} \right\} 
\end{equation}
and $|\mathscr{T}^\gamma_N|$ be the Lebesgue measure of the set $\mathscr{T}^\gamma_N$. 
Then for any $\gamma < \gamma_* = \sqrt{2\beta}$,  we have 
$\displaystyle\frac{\log|\mathscr{T}^\gamma_N |}{ \log N} \to - \frac{\gamma^2}{2\beta}$
 in probability as $N\to+\infty$. 
Moreover, we have in probability  as $N\to+\infty$,
\begin{equation} \label{max1}
\frac{\max_{\theta\in\T} \log| P_N(e^{\i \theta}) | }{\log N} \to \frac{\gamma_*}{\beta} = \sqrt{\frac{2}{\beta}}  .
\end{equation}
\end{proposition}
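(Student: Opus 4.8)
The plan is to deduce both statements from the GMC convergence in Theorem~\ref{thm:gmc1} together with the rigidity-type estimates already available, and from the large deviation asymptotics for $\log|P_N(e^{\i\vartheta})|$ borrowed from \cite{Su09}. The starting point is to relate the set $\mathscr{T}^\gamma_N$ defined with $|z|=1$ to its mesoscopically regularized analogue defined with $r_N = 1-(\log N)^6/N$. Concretely I would first show that $\log|P_N(e^{\i\vartheta})| - \log|P_N(r_N e^{\i\vartheta})| = \O(\log\log N)$ uniformly in $\vartheta\in\T$, with overwhelming probability, using the rigidity estimate \eqref{rig} (or Corollary~\ref{cor:rig}): since all eigenvalues are within $\O(R/N)$ of the equally spaced grid, the difference of the two linear statistics with kernels $\phi_1 - \phi_{r_N}$ can be controlled because $\phi_1(\theta)-\phi_{r_N}(\theta)$ is supported (up to small errors) near $\theta = 0$ at scale $1-r_N \asymp (\log N)^6/N$, and only $\O((\log N)^6)$ eigenvalues contribute, each by $\O(1)$ — in fact the logarithmic divergence is cut off so that the total is $\O(\log\log N) = o(\log N)$. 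Hence $|\mathscr{T}^\gamma_N|$ and the level set $\{\vartheta : |P_N(r_N e^{\i\vartheta})| \ge N^{\gamma/\beta}\}$ differ only by replacing $\gamma$ with $\gamma \pm o(1)$, which is harmless once we prove the limit is continuous in $\gamma$.

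For the first claim, write $|\mathscr{T}^\gamma_N| \approx \int_\T \1\{ \gamma \log|P_N(r_N e^{\i\vartheta})| \ge \gamma^2 \beta^{-1}\log N\}\, \d\vartheta$. Comparing with the GMC mass: by Theorem~\ref{thm:gmc1}, for $\breve\gamma = \gamma\sqrt{2/\beta} < 2$ the measure $\mu_N^\gamma(\d\vartheta) = \frac{|P_N(r_N e^{\i\vartheta})|^\gamma}{\E_N^\beta[|P_N(r_N e^{\i\vartheta})|^\gamma]}\frac{\d\vartheta}{2\pi}$ converges in law to the nondegenerate measure $\mu_\G^{\breve\gamma}$. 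Since $\E_N^\beta[|P_N(r_N e^{\i\vartheta})|^\gamma] = N^{\gamma^2/2\beta + o(1)}$ (which follows from the second moment / Szeg\H{o}-type asymptotics, or directly from Theorem~\ref{clt:global} applied to the linear statistic $\gamma\log|P_N(r_N e^{\i\cdot})|$, giving $\log\E = \gamma^2\beta^{-1}\sigma^2(\phi_{r_N})/2 + o(1) = \frac{\gamma^2}{2\beta}\log N + o(\log N)$), the total mass of the \emph{unnormalized} measure $|P_N(r_N e^{\i\vartheta})|^\gamma\,\d\vartheta$ is $N^{\gamma^2/2\beta + o(1)}$ with high probability. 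The upper bound on $|\mathscr{T}^\gamma_N|$ then follows from Markov's inequality applied to this unnormalized measure: $|\mathscr{T}^\gamma_N| \le N^{-\gamma/\beta\cdot\gamma}\int_\T |P_N(r_N e^{\i\vartheta})|^\gamma\,\d\vartheta = N^{-\gamma^2/\beta}\cdot N^{\gamma^2/2\beta+o(1)} = N^{-\gamma^2/2\beta + o(1)}$. For the matching lower bound, I would invoke the nondegeneracy of the limiting GMC measure: a positive proportion of the GMC mass sits on points where $\log|P_N(r_N e^{\i\vartheta})|$ is within $o(\log N)$ of $\frac{\gamma}{\beta}\log N$ — this is the characterization of the support of $\mu_\G^{\breve\gamma}$ via thick points \eqref{TP1}, transported through the convergence; concretely one can compare $\mu_N^\gamma$ with $\mu_N^{\gamma'}$ for $\gamma' $ slightly larger than $\gamma$ and use $\mu_N^{\gamma'}(\mathscr{T}^\gamma_N)^c \to \mu_\G^{\breve{\gamma'}}(\text{set of points with insufficient thickness}) < \text{full mass}$, so with positive probability $\mathscr{T}^\gamma_N$ carries a nonvanishing fraction of $\mu_N^{\gamma'}$-mass, whence $|\mathscr{T}^\gamma_N| \ge N^{-\gamma^2/2\beta - o(1)}$. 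Taking logarithms and dividing by $\log N$ gives convergence in probability to $-\gamma^2/(2\beta)$.

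For \eqref{max1}, the upper bound $\max_\vartheta \log|P_N(e^{\i\vartheta})| \le (\gamma_*/\beta + \epsilon)\log N$ with high probability is equivalent to $\mathscr{T}^{\gamma_* + \beta\epsilon}_N = \emptyset$; by the first part (extended slightly past $\gamma_*$, or directly by the Markov bound above, which gives $\E|\mathscr{T}^\gamma_N| \le N^{-\gamma^2/2\beta + \gamma^2/2\beta + o(1)}$ — one must be careful here and instead use the Markov bound at the original radius combined with a first-moment estimate $\E_N^\beta[|P_N(e^{\i\vartheta})|^\gamma] = N^{\gamma^2/2\beta + o(1)}$ valid for $\gamma < \gamma_*$ from \cite{Su09}), the expected Lebesgue measure of $\mathscr{T}^\gamma_N$ tends to $0$ for $\gamma$ slightly less than $\gamma_*$ is not quite enough; so instead I would use a union bound over an $N^{-C}$-net of $\T$ together with the modulus of continuity of $\log|P_N|$, controlled by rigidity, to upgrade to the pointwise maximum. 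The lower bound $\max_\vartheta \log|P_N(e^{\i\vartheta})| \ge (\gamma_*/\beta - \epsilon)\log N$ follows from the first claim: for $\gamma = \gamma_* - \delta$ we have $|\mathscr{T}^\gamma_N| = N^{-\gamma^2/2\beta + o(1)} > 0$ with high probability, so there exists $\vartheta$ with $\log|P_N(e^{\i\vartheta})| \ge \gamma\beta^{-1}\log N = (\gamma_*/\beta - \delta/\beta)\log N$; letting $\delta\to 0$ finishes.

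The main obstacle I anticipate is the \emph{lower bound} part — showing $|\mathscr{T}^\gamma_N|$ is not too small, i.e.\ that the thick points genuinely carry the advertised amount of Lebesgue measure. Convergence in law of $\mu_N^\gamma$ to $\mu_\G^{\breve\gamma}$ controls where the \emph{GMC mass} concentrates but translating this into a statement about \emph{Lebesgue measure} of a level set of $\log|P_N(r_N e^{\i\cdot})|$ requires a two-sided control: one needs that on a set of near-maximal GMC mass the field value is not much \emph{larger} than $\frac{\gamma}{\beta}\log N$ either, which is precisely the statement that $\mu_N^{\gamma}$ does not charge $(\gamma+\epsilon)$-thick points — this is the content of the support property \eqref{TP1} of the limiting measure and of the comparison $\mu_N^{\gamma'} \ll \mu_N^\gamma$-type inequalities (Kahane's convexity / Girsanov shift for the discrete model), which one must verify survives the limit. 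Making this rigorous for general $\beta>0$, where we only have the mesoscopic CLT and no determinantal structure, is the delicate point; everything else is a matter of combining the Markov bound, the rigidity estimate \eqref{rig}, and the first-moment asymptotics of \cite{Su09}.
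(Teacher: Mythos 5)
Your overall architecture (first--moment upper bounds from Su's formula, GMC convergence of $\mu_N^\gamma$ for the lower bound, and deducing \eqref{max1} from the two) is the same as the paper's, but there is a genuine gap at the single most delicate point, and it is precisely the one you flag yourself: the bridge between the regularized field $\Upsilon_{N,r_N}(\vartheta)=\log|P_N(r_Ne^{\i\vartheta})|$, where Theorem~\ref{thm:gmc1} lives, and the boundary field $\Upsilon_N(\vartheta)=\log|P_N(e^{\i\vartheta})|$, which defines $\mathscr{T}^\gamma_N$. Your proposed bridge --- a uniform, two--sided $\O(\log\log N)$ comparison of the two fields with overwhelming probability, proved from the rigidity estimate \eqref{rig} --- is false as stated: $\Upsilon_N$ equals $-\infty$ at every eigenvalue and is arbitrarily negative at points exponentially close to an eigenvalue, so no pointwise bound ``regularized thick $\Rightarrow$ boundary thick'' can hold; and even away from the singularities, the rigidity route only gives $\big|\int(\phi_1-\phi_{r_N})(\cdot-\vartheta)\,d\widetilde\mu_N\big|\le \|\phi_1'-\phi_{r_N}'\|_{L^1(\T)}\max_\T|h_N|\lesssim (\log N)^2/\beta$, since $\|\phi_1'-\phi_{r_N}'\|_{L^1(\T)}\asymp\log\frac{1}{1-r_N}\asymp\log N$, which is useless at the level--set scale $\frac{\gamma}{\beta}\log N$. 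The fallback you mention (Kahane convexity / Girsanov for the discrete model) is not developed and does not by itself convert GMC mass into Lebesgue measure of a level set of the \emph{boundary} field. The paper's substitute for this missing bridge is Proposition~\ref{prop:LB}: since $\Upsilon_N$ is harmonic in $\D$, $\Upsilon_{N,r_N}(x)$ is the Poisson average of $\Upsilon_N$ at scale $\epsilon_N=1-r_N$; on the high--probability event $\{\max_\T\Upsilon_N\le(\gamma_*+\tfrac\delta2)\log N\}$ (which comes from the first--moment bound of Lemma~\ref{lem:UB}, not from GMC), a $(\gamma+\delta)$--thick point of $\Upsilon_{N,r_N}$ forces $\Upsilon_N\ge\frac\gamma\beta\log N$ on a subset of measure $\gtrsim\delta\,\epsilon_N$ of the window of width $L\epsilon_N$ around it; a covering argument over $\asymp\epsilon_N^{-1}$ windows then converts the lower bound on $|\widetilde{\mathscr{T}}_N^{\gamma+\delta}|$ supplied by Lemma~\ref{lem:TP} (the GMC input, which is where your $\mu_N^{\gamma+\delta}$ versus $\widetilde{\mathscr{T}}_N^{\gamma+2\delta}$ comparison idea is indeed carried out, at radius $r_N$) into the lower bound $|\mathscr{T}_N^\gamma|\ge N^{-(\gamma+\delta)^2/2\beta}$ for the boundary set. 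This one--sided averaging argument, using the a priori max bound in place of any pointwise field comparison, is the idea your proposal is missing.

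A secondary, fixable issue: for the upper bound in \eqref{max1} you propose a union bound over an $N^{-C}$--net together with ``the modulus of continuity of $\log|P_N|$ controlled by rigidity''; but $\log|P_N|$ has no modulus of continuity (again the $-\infty$ spikes), so this mechanism does not work as written. The correct and standard device, used in the paper, is the deterministic polynomial inequality $\max_\T|P_N|\le 14\max_{k\le 2N}|P_N(e^{\i\pi k/N})|$ from \cite[Lemma~4.3]{CMN18}, followed by a union bound over $2N$ points, Markov's inequality with $\gamma=\gamma_*$ and the bound $\E_N^\beta[|P_N(e^{\i\vartheta})|^{\gamma}]\le C_\beta N^{\gamma^2/2\beta}$ coming from \cite[Theorem~1.2]{Su09}. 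Your upper bound on $|\mathscr{T}^\gamma_N|$ (Markov with Su's moments at radius $1$) and your derivation of the lower bound in \eqref{max1} from $|\mathscr{T}^{\gamma_*-\delta}_N|>0$ are both correct and agree with Lemma~\ref{lem:UB} and the end of the paper's proof.
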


The interpretation of Proposition~\ref{prop:TP1} is that the \emph{multi--fractal spectrum} of the sets of $\gamma$-thick points of the \Cbeta\ characteristic polynomial is given by the function $\gamma \mapsto (1- \gamma^2/2\beta)_+$ for $\gamma \ge 0$. This is in accordance with the behavior of Gaussian log--correlated fields.  
Proposition~\ref{prop:TP1} was first obtained  by Arguin--Belius--Bourgade \cite[Theorem 1.3]{ABB17} for the CUE $(\beta=2)$. We generalize this result for all $\beta>0$. 
Then, by \cite[Corollary 1.4]{ABB17}, we also obtain the limit of the so-called \emph{free energy}:
\begin{equation*} \label{freezing}
\lim_{N \to+\infty} \frac{1}{\log N} \log\left( \int_\T | P_N(e^{\i \theta}) |^\gamma \frac{d\theta}{2\pi} \right)
= \begin{cases}
\frac{\gamma^2}{2\beta} &\text{if } \gamma \in [0,\gamma_*] \\
\sqrt{\frac{2\gamma^2}{\beta}} -1  &\text{if } \gamma > \gamma_* 
\end{cases} . 
\end{equation*}
This shows an interesting \emph{transition} at the critical value $\gamma_* = \sqrt{2\beta}$. For log--correlated fields, the fact that the free energy becomes linear in the \emph{super--critical regime} $(\gamma>\gamma_*)$ is usually called \emph{freezing}. 
In particular, this \emph{freezing phenomenon} plays a crucial role in predicting the precise asymptotic behavior of $|P_N|$, see  Fyodorov--Keating \cite{FK14}. 
We can also obtain a result analogous to Proposition~\ref{prop:TP1} for the imaginary part of the logarithm of the characteristic polynomial \eqref{Psi}. 

\begin{proposition} \label{prop:TP2}
For any $\gamma>0$, let 
\[
\widetilde{\mathscr{T}}^{\gamma}_N = \left\{ \theta \in \T :  \Psi_N(\theta)  \ge \frac \gamma\beta \log N \right\} .
\]
Then for any $\gamma < \gamma_*$,   we have  $\frac{\log|\widetilde{\mathscr{T}}^\gamma_N |}{ \log N} \to -\frac{\gamma^2}{2\beta}$ in probability as $N\to+\infty$.
Moreover, we have in probability  as $N\to+\infty$,
\begin{equation} \label{max2}
\frac{\max_\T \Psi_N(\theta)}{\log N} \to \frac{\gamma_*}{\beta} = \sqrt{\frac{2}{\beta}} . 
\end{equation} 
\end{proposition}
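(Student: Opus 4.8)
The plan is to mirror the argument for Proposition~\ref{prop:TP1} (which is itself deduced from Theorem~\ref{thm:gmc1} together with the one-point moment asymptotics) but now using Theorem~\ref{thm:gmc2} and the analogous one-point asymptotics for $e^{\gamma\Psi_{N,r}}$. First I would fix a small mesoscopic radius $r_N = 1-\frac{(\log N)^4}{N}$ as in Theorem~\ref{thm:gmc2}, and note that $\widetilde{\mu}_N^\gamma([0,2\pi))\to\mu_\G^{\breve\gamma}(\T)$ in law with $\breve\gamma=\gamma\sqrt{2/\beta}$. Since the total mass of a subcritical GMC measure is a.s.\ positive and finite, this gives two-sided control: for $\gamma<\gamma_*=\sqrt{2\beta}$, i.e.\ $\breve\gamma<2$, the random variable $\int_\T e^{\gamma\Psi_{N,r_N}(\theta)}\frac{d\theta}{2\pi}$ is, after dividing by its normalization $Z_N(\gamma):=\E_N^\beta[e^{\gamma\Psi_{N,r_N}(\theta)}]$ (which is $\theta$-independent by rotation invariance), tight and bounded away from zero. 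By the CLT for a single linear statistic (Theorem~\ref{clt:global} applied pointwise, or the variance computation $\E_N^\beta[(\Psi_N(\theta))^2]=\tfrac1\beta\log N+\O(1)$), one has $\log Z_N(\gamma)=\tfrac{\gamma^2}{2\beta}\log N+\O(1)$. Hence $\log\int_\T e^{\gamma\Psi_{N,r_N}(\theta)}\frac{d\theta}{2\pi} = \tfrac{\gamma^2}{2\beta}\log N + O_{\mathbf P}(1)$, which already delivers the free-energy-type statement in the subcritical range and is the key input for both the thick-point count and the maximum.

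For the thick-point count, I would relate the Lebesgue measure $|\widetilde{\mathscr T}^\gamma_N|$ of $\{\Psi_N(\theta)\ge\frac\gamma\beta\log N\}$ to a moment of the mesoscopically-regularized field. The upper bound $\limsup \frac{\log|\widetilde{\mathscr T}^\gamma_N|}{\log N}\le -\frac{\gamma^2}{2\beta}$ is a first-moment (Markov) bound: $\E_N^\beta|\widetilde{\mathscr T}^\gamma_N|=2\pi\,\P_N^\beta[\Psi_N(\theta)\ge\frac\gamma\beta\log N]$, and a sharp upper tail estimate for $\Psi_N(\theta)$ — obtainable from the exponential moment bound $\E_N^\beta[e^{\lambda\Psi_N(\theta)}]\le C\,N^{\lambda^2/2\beta}$ valid for $|\lambda|<\sqrt{2\beta}$ (this is precisely the content behind Theorem~\ref{thm:gmc2}, extended to the endpoint by \cite[Theorem 1.2]{Su09} or \cite{Su09}-type Gaussian approximation; cf. the use of \cite{Su09} advertised before Proposition~\ref{prop:TP1}) — gives $\P_N^\beta[\Psi_N(\theta)\ge\frac\gamma\beta\log N]\le N^{-\gamma^2/2\beta+o(1)}$ by optimizing $\lambda$. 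The matching lower bound is the more delicate half: I would use that $\mu_\G^{\breve\gamma}$ is supported exactly on the $\breve\gamma$-thick points of $\G$, so positivity of $\mu_N^\gamma$'s limit forces the \emph{existence}, with the right density, of $\theta$ with $\Psi_N(\theta)\gtrsim\frac\gamma\beta\log N$; quantitatively, a second-moment / Paley–Zygmund argument applied to $\int_{\widetilde{\mathscr T}^{\gamma-\epsilon}_N}e^{\gamma\Psi_{N,r_N}}\frac{d\theta}{2\pi}$ versus $\int_\T e^{\gamma\Psi_{N,r_N}}\frac{d\theta}{2\pi}$ shows the bulk of the GMC mass sits on the $(\gamma-\epsilon)$-thick set, forcing $|\widetilde{\mathscr T}^\gamma_N|\ge N^{-\gamma^2/2\beta-o(1)}$ for every $\gamma$ in the open subcritical interval; letting $\epsilon\downarrow0$ closes the gap.

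For \eqref{max2}, the upper bound $\limsup\frac{\max_\T\Psi_N}{\log N}\le\sqrt{2/\beta}$ follows from a union bound: using the rigidity-type discretization (cover $\T$ by $\sim N$ arcs and use continuity/Lipschitz control of $\Psi_N$ between consecutive eigenvalues, since $\psi$ is piecewise linear with slope $\tfrac12$), $\P_N^\beta[\max_\T\Psi_N\ge(\sqrt{2/\beta}+\delta)\log N]\le N\cdot N^{-(\sqrt{2/\beta}+\delta)^2\beta/2+o(1)}\to0$. The lower bound is immediate from the thick-point statement: since $|\widetilde{\mathscr T}^\gamma_N|>0$ with probability $\to1$ for every $\gamma<\gamma_*$, there exists $\theta$ with $\Psi_N(\theta)\ge\frac\gamma\beta\log N$, so $\liminf\frac{\max_\T\Psi_N}{\log N}\ge\frac\gamma\beta$; let $\gamma\uparrow\gamma_*=\sqrt{2\beta}$ to get $\frac{\gamma_*}{\beta}=\sqrt{2/\beta}$. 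The main obstacle, as in the $P_N$ case, is controlling the \emph{endpoint} $\gamma\to\gamma_*$ and the passage from the mesoscopic regularization $\Psi_{N,r_N}$ to the genuine boundary field $\Psi_N$: one must show that replacing $r_N$ by $1$ costs only $o(\log N)$ in the relevant exponents, which is exactly where the choice $r_N=1-\frac{(\log N)^4}{N}$ and the $\Co^{3+\alpha}$ smoothness hypothesis of Theorem~\ref{clt:meso} are used, together with the a priori tail/rigidity bounds \eqref{rig} to rule out exceptional configurations near $|z|=1$.
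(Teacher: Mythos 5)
Your overall architecture is the paper's: Theorem~\ref{thm:gmc2} for the mesoscopically regularized field, first--moment upper bounds coming from Su's exact formula \eqref{ZG1} (equivalently the exponential--moment bound \eqref{exp_moment}), a union bound over a grid of $\sim N$ points exploiting the piecewise--linear structure of $\Psi_N$ (this is exactly the deterministic bound \eqref{UB8} used in Remark~\ref{rk:imagpart}), and letting $\gamma\uparrow\gamma_*$ to get the lower bound in \eqref{max2}. Those parts are fine and match Lemma~\ref{lem:UB} and Remark~\ref{rk:imagpart}.

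The genuine gap is in your lower bound for $|\widetilde{\mathscr{T}}^\gamma_N|$. The measure $\widetilde{\mu}_N^\gamma$ is built from $\Psi_{N,r_N}$, so the mass--concentration argument (the analogue of Lemma~\ref{lem:TP}, via convergence in law of the total mass and the first--moment bounds of \cite[Lemma 3.2]{CFLW}) only produces a lower bound on the Lebesgue measure of thick points of the \emph{regularized} field $\Psi_{N,r_N}$, whereas the proposition concerns the boundary field $\Psi_N$. You correctly flag this passage as the main obstacle, but you never carry it out, and the tools you point to are not the right ones: the rigidity bound \eqref{rig} and the $\Co^{3+\alpha}$ hypothesis of Theorem~\ref{clt:meso} play no role here, and a uniform pointwise comparison ``replacing $r_N$ by $1$ costs only $o(\log N)$'' is not available -- $\Psi_{N,r_N}(\theta)-\Psi_N(\theta)$ is a Poisson average of increments of $\Psi_N$ over a window of width $\asymp (\log N)^4/N$, and at the exceptional points relevant for thick points those increments are themselves of order $\log N$. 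The paper's bridge is a different, deterministic averaging argument (Proposition~\ref{prop:LB}, adapted as indicated in Remark~\ref{rk:imagpart}): since $\Psi_{N,r}(x)=\int_\T \Psi_N(\theta)\,\mathrm{P}_r(\theta-x)\frac{d\theta}{2\pi}$, on the a--priori event $\{\max_\T\Psi_N\le(\gamma_*+\tfrac\delta2)\log N\}$ (supplied by the first--moment method) any point $x$ that is $(\gamma+\delta)$--thick for $\Psi_{N,r_N}$ forces a proportion of order $\delta$ of the window $\{|\theta-x|\le L(1-r_N)\}$ to lie in $\widetilde{\mathscr{T}}^\gamma_N$; a covering/pigeonhole argument over $\asymp (1-r_N)^{-1}$ windows then converts the regularized lower bound into the stated one. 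Without this step your argument proves the result for the regularized thick points only. A secondary caution: the Paley--Zygmund/second--moment route you sketch cannot cover the whole subcritical range, since second moments of $\int_\T e^{\gamma\Psi_{N,r_N}}\frac{d\theta}{2\pi}$ diverge once $\gamma\ge\sqrt{\beta}$ (i.e.\ $\breve\gamma\ge\sqrt2$, the $L^2$ threshold); the paper instead uses convergence in law of the total mass from \cite{LOS18} together with first--moment bounds on the complementary sets, which is the argument you should follow.
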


\begin{remark} \label{rk:symmetry}
Since the function $-\psi(\theta) = \psi(2\pi-\theta)$ for all $\theta\in(0,2\pi)$, we see that as random field: $\Psi_N(\theta) \overset{\rm law}{=} -\Psi_N(-\theta)$. 
By~\eqref{max2}, this implies for instance that $\displaystyle\frac{\min_\T \Psi_N(\theta)}{\log N} \to \sqrt{\frac{2}{\beta}}$ in probability as $N\to+\infty$. 
\hfill $\blacksquare$\end{remark}

The law of large numbers \eqref{max1} and \eqref{max2} for the maximums of the real and imaginary parts of the logarithm of the characteristic polynomial of the \Cbeta have already been obtained in \cite{CMN18}  by a completely different method. 
In fact, the complete asymptotic behavior of the maximum of the field $\log|P_N|$ when $\beta=2$ was predicted in \cite{FK14} by analogy with Gaussian log--correlated fields and part of this conjecture was verified by Chhaibi--Madaule--Najnudel \cite[Theorem~1.2]{CMN18} who showed that $\max_\T  \log| P_N|$ and $\max_\T \Psi_N$, once re-centered, are tight random variables. 
Let us also point out that extensive numerical studies of the extreme value statistics of the \Cbeta\ characteristic polynomial for large $N\in\N$  have been done by 
Fyodorov--Gnutzmann--Keating \cite{FGK18} and they indicate some interesting relationships between the extreme values of the logarithm of the characteristic polynomial and large gaps in the spectrum. 

\medskip

Finally, as observed in \cite[Theorem~1.5]{ABB17} or  \cite[Corollary~1.3]{CMN18},  the asymptotics \eqref{max2} imply \emph{optimal rigidity estimates} for the \Cbeta\ eigenvalues.

\begin{corollary} \label{cor:rig}
For any $\beta >0$ and $\delta>0$, 
\begin{equation} \label{oprig}
\lim_{N\to+\infty} \P_N^\beta\left[ (2-\delta) \sqrt{\frac{2}{\beta}}  \frac{\log N}{N}  \le \max_{k=1,\dots , N}  \big| \theta_k - \tfrac{2\pi k}{N} \big| \le (2+\delta) \sqrt{\frac{2}{\beta}}  \frac{\log N}{N} \right] =1 . 
\end{equation}
\end{corollary}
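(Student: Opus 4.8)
The plan is to deduce the optimal rigidity estimate \eqref{oprig} from the asymptotics \eqref{max2} in Proposition~\ref{prop:TP2} for the maximum of the imaginary part $\Psi_N$ of the logarithm of the characteristic polynomial, together with the symmetry in Remark~\ref{rk:symmetry}. The key observation is that $\Psi_N$ is, up to an explicit deterministic correction, the eigenvalue counting function. Indeed, from \eqref{Psi} and $\psi(\theta) = \frac{\theta-\pi}{2}$ on $(0,2\pi)$, one computes that for $\vartheta \in [\theta_k, \theta_{k+1})$ one has $\Psi_N(\vartheta) = \sum_{j=1}^N \frac{(\vartheta - \theta_j \bmod 2\pi) - \pi}{2}$, which rearranges to
\begin{equation} \label{cr:count}
\Psi_N(\vartheta) = \pi\Big( \#\{ j : \theta_j \le \vartheta\} - \frac{N\vartheta}{2\pi}\Big) + c_N,
\end{equation}
where $c_N = \frac{N\vartheta}{2}\big|_{\text{(telescoping)}} - \tfrac12 \sum_j \theta_j$ collects the terms that do not depend on $\vartheta$; more precisely $c_N = \pi \widetilde{\mu}_N([0,\vartheta))$ has a version where the additive constant is pinned down by $\int_\T \Psi_N = 0$ (which follows from $\widehat{\psi}_0 = 0$). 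Thus $\Psi_N(\vartheta) = \pi \, \widetilde{\mu}_N([0,\vartheta))$ up to the $O(1)$ choice of branch/constant. First I would make this identity precise and note that it is exactly the standard relation between the argument of the characteristic polynomial and the counting function $\mathrm{N}_N(\vartheta) := \#\{j : 0 < \theta_j \le \vartheta\}$.

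Second, I would translate the rigidity event into a statement about $\sup_\vartheta |\widetilde{\mu}_N([0,\vartheta))|$. The event $\big|\theta_k - \tfrac{2\pi k}{N}\big| \le \tfrac{2\pi R}{N}$ for all $k$ is essentially equivalent (up to $\pm 1$ shifts in the index, which contribute $o(\log N)$ to all the relevant scales) to $\sup_\vartheta |\mathrm{N}_N(\vartheta) - \tfrac{N\vartheta}{2\pi}| \le R$: if the counting function stays within $R$ of its mean then no $\theta_k$ can be displaced by more than $\approx \tfrac{2\pi R}{N}$, and conversely. Concretely, $\max_k \big|\theta_k - \tfrac{2\pi k}{N}\big| \le \tfrac{2\pi R}{N}$ iff $\sup_\vartheta \big(\mathrm{N}_N(\vartheta) - \tfrac{N\vartheta}{2\pi}\big) < R + 1$ and $\inf_\vartheta \big(\mathrm{N}_N(\vartheta) - \tfrac{N\vartheta}{2\pi}\big) > -R-1$, say; I would spell out this elementary two-sided comparison carefully, since getting the constant right is the whole point of the corollary. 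Combining with \eqref{cr:count}, we get
\[
\max_{k}\big|\theta_k - \tfrac{2\pi k}{N}\big| = \frac{2}{N}\Big( \max_\T \Psi_N \vee \big({-\min_\T \Psi_N}\big)\Big) + O\Big(\frac1N\Big).
\]

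Third, I would invoke Proposition~\ref{prop:TP2}: $\frac{\max_\T \Psi_N}{\log N} \to \sqrt{2/\beta}$ in probability, and by Remark~\ref{rk:symmetry} also $\frac{-\min_\T \Psi_N}{\log N} \to \sqrt{2/\beta}$ in probability. Hence $\frac{N}{\log N}\max_k |\theta_k - \tfrac{2\pi k}{N}| \to 2\sqrt{2/\beta}$ in probability, which is precisely \eqref{oprig}: for any $\delta > 0$, with probability tending to $1$ the maximal displacement lies between $(2-\delta)\sqrt{2/\beta}\,\tfrac{\log N}{N}$ and $(2+\delta)\sqrt{2/\beta}\,\tfrac{\log N}{N}$. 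The $O(1/N)$ error from the branch constant and the $\pm1$ index shifts is negligible against the $\tfrac{\log N}{N}$ scale, so it does not affect the limit.

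The main obstacle is purely bookkeeping rather than conceptual: one must be careful about the exact relationship between the indexing of the ordered eigenvalues $\theta_k$, the counting function $\mathrm{N}_N$, and the piecewise-linear field $\Psi_N$ — in particular which constant appears in \eqref{cr:count} and whether the supremum of $\Psi_N$ is attained just before or just after an eigenvalue — because an error of a constant here would still change the multiplicative constant in \eqref{oprig} by a factor. Once the identity $\Psi_N = \pi\widetilde{\mu}_N([0,\cdot)) + (\text{pinned constant})$ is nailed down and the two-sided comparison between displacements and the counting function is written out, the corollary follows immediately from Proposition~\ref{prop:TP2} and Remark~\ref{rk:symmetry}.
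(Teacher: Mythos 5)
Your overall route is the same as the paper's (relate $\Psi_N$ to the centred counting function $h_N$ of \eqref{h}, then feed in Proposition~\ref{prop:TP2} and Remark~\ref{rk:symmetry}), but there is a genuine gap in the step where you dismiss the additive constant as $O(1)$. The exact relation is $\Psi_N(\vartheta)=\Psi_N(0)-\pi h_N(\vartheta)$ (the paper's \eqref{eigcount}); equivalently, if you pin the constant by $\int_\T\Psi_N=0$, the constant is $\pi$ times the circle--average of $h_N$, namely $\Psi_N(0)=\tfrac12\big(\pi N-\sum_j\theta_j\big)$. This is a \emph{random} quantity, not an $O(1)$ one: it is a linear statistic of the discontinuous function $\psi$, with variance of order $\beta^{-1}\log N$, so it is typically of size $\sqrt{\log N}$ and a priori could be of order $\log N$. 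Without controlling it, you only get $\max_\T|h_N|\le \tfrac1\pi\big(\max_\T\Psi_N-\min_\T\Psi_N\big)$, and if $\Psi_N(0)$ were comparable to $\max_\T\Psi_N$ the displacement could be as large as $\approx 4\sqrt{2/\beta}\,\tfrac{\log N}{N}$; hence your claimed identity $\max_k|\theta_k-\tfrac{2\pi k}{N}|=\tfrac2N\big(\max_\T\Psi_N\vee(-\min_\T\Psi_N)\big)+O(1/N)$ is not justified, and the upper bound $(2+\delta)\sqrt{2/\beta}\,\tfrac{\log N}{N}$ in \eqref{oprig} — which is the delicate half of the corollary — does not follow as written. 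Note also that the crude rigidity bound of Lemma~\ref{lem:maxh} cannot close this hole, since it only yields $|\Psi_N(0)|\le\pi\max_\T|h_N|\lesssim \pi\beta^{-1/2}\log N$, which is not $o(\log N)$.

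The missing ingredient is a single--point tail estimate showing $|\Psi_N(0)|=o(\log N)$ with probability tending to one. The paper supplies exactly this: from Su's formula \eqref{ZG1} one gets the exponential moment bound \eqref{exp_moment}, hence $\P_N^\beta[\Psi_N(0)\ge\tfrac\delta\beta\log N]\le C_\beta N^{-\delta^2/2\beta}$ (estimate \eqref{UB6}), and the symmetry $\Psi_N(\theta)\overset{\rm law}{=}-\Psi_N(-\theta)$ of Remark~\ref{rk:symmetry} handles the negative tail (a Chebyshev bound using the variance of $\Psi_N(0)$ would also do). Once $|\Psi_N(0)|\le\epsilon\log N$ on an event of probability $1-o(1)$, your comparison becomes $\max_\T|h_N|=\tfrac1\pi\max_\T|\Psi_N|+O(\epsilon\log N)$, and the rest of your argument (the $\pm1$ index bookkeeping, $h_N(\theta_k)=\tfrac{N}{2\pi}(\tfrac{2\pi k}{N}-\theta_k)$, and the limits $\max_\T\Psi_N/\log N\to\sqrt{2/\beta}$, $-\min_\T\Psi_N/\log N\to\sqrt{2/\beta}$) goes through and gives \eqref{oprig}. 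A minor further point: with the convention $\psi(\theta)=\tfrac{\theta-\pi}{2}$ used after \eqref{Psi}, the field $\Psi_N$ jumps by $-\pi$ at each eigenvalue, so the sign in your formula for $\Psi_N$ in terms of the counting function is reversed; this is harmless for the final constant because of the symmetry, but worth fixing when you write the identity precisely.
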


\subsection{Sine$_\beta$ point processes}  \label{sect:sine}

The Sine$_\beta$ processes describe the bulk scaling limits of the eigenvalues of  $\beta$-ensembles. 
This family of translation invariant point processes on $\R$ was first introduced independently by Killip--Stoiciu \cite{KS09} as the scaling limits of the \Cbeta\ and by Valk\'o--Vir\'ag \cite{VV09} as that of Gaussian $\beta$-ensembles. 
For general $\beta>0$, universality of the  Sine$_\beta$ processes in the bulk $\beta$--ensembles on $\R$ was obtained by Bourgade--Erd\H{o}s--Yau  \cite{BEY14}  for the class of analytic one--cut regular potential by  coupling two different ensembles  using the Dyson Brownian motion. 
Our proof of Theorem~\ref{clt:sine} relies on a similar idea. Using the coupling  from Valk\'o--Vir\'ag \cite{VV18} between the  Sine$_\beta$  and \Cbeta\ point processes, we can transfer our mesoscopic CLT (Theorem~\ref{clt:meso}) into a CLT for the Sine$_\beta$ processes.

\medskip

The  Sine$_\beta$ process is usually defined through its counting function which satisfies a system of stochastic differential equations \cite{KS09, VV09}. 
Recently, Valk\'o--Vir\'ag \cite{VV17}  introduced an alternate characterization as the eigenvalues of a stochastic differential operator.
It turns out that the \Cbeta\ also corresponds to the eigenvalues  of an operator of the same kind as the  Sine$_\beta$ and it is possible to couple these two operators  in such a way that their eigenvalues are close to each other. 
This coupling was studied in detail by Valk\'o--Vir\'ag  \cite{VV18} and they obtain the following result. 

\begin{theorem} \label{thm:coupling}
Fix $\beta>0$ and recall that $0<\theta_1<\cdots <\theta_N <2\pi$ denotes the eigenvalues of  \Cbeta. Let us extend this configuration periodically by setting   $\theta_{k+ \ell N} = \theta_k + 2\pi \ell$ for all $k\in [N]$ and $\ell\in\Z$.
By $\cite[Corollary~2]{VV18}$, there exists a coupling $\P$ of the \Cbeta\  with the   Sine$_\beta$ process $(\lambda_k)_{k\in\Z}$ such that for any $\epsilon>0$, there exists a random integer $\mathrm{N}_\epsilon$ and we have for all $N\ge \mathrm{N}_\epsilon$, 
\[
 \big| \tfrac{N}{2\pi}\theta_k - \lambda_k \big| \le \frac{1+k^2}{N^{1/2-\epsilon}}  , \qquad \forall |k| \le N^{1/2-\epsilon} . 
\]
\end{theorem}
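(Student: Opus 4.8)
\emph{Proof proposal.} Since the statement is quoted verbatim from \cite[Corollary~2]{VV18}, the plan is only to recall the structure of their argument. The starting point is the operator realization of both point processes. By \cite{VV17}, the Sine$_\beta$ process is the spectrum of the \emph{Sine$_\beta$ operator}, a Dirac-type first-order differential operator on functions $[0,1)\to\R^2$ whose coefficients are built from a path $x=(x_t)_{t\in[0,1)}$ performing Brownian motion in the hyperbolic plane (the Brownian carousel of \cite{VV09}). The first step is to recall from \cite{VV18} that the rescaled \Cbeta\ eigenvalues $\frac{N}{2\pi}\theta_k$ likewise coincide in law with the spectrum of an operator of \emph{exactly the same form}, say $\mathrm{Circ}_{N,\beta}$, whose driving path $x^N$ is an explicit time-changed and truncated version of hyperbolic Brownian motion, the truncation encoding the finite size $N$; this identification is the technical heart of \cite{VV18}.

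Next I would construct the coupling $\P$ by building the coefficients of $\mathrm{Circ}_{N,\beta}$ and of the Sine$_\beta$ operator from one and the same hyperbolic Brownian motion. Under this coupling the two operators differ only through the difference of their otherwise explicit coefficient functions, and one verifies that this difference is $\O(N^{-1/2+\epsilon})$ uniformly on the portion of $[0,1)$ that carries the low-index eigenvalues.

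The key step is then to transfer operator closeness into eigenvalue closeness via the phase/winding characterization of the spectrum: for a spectral parameter $\lambda$ one solves a linear ODE driven by the path $x$ (resp.\ $x^N$), and $\lambda$ is an eigenvalue precisely when the associated hyperbolic-plane-valued path completes an integer number of turns, so that the eigenvalue counting function at $\lambda$ equals, up to normalization, the total winding. Comparing the counting functions of $\mathrm{Circ}_{N,\beta}$ and of the Sine$_\beta$ operator therefore reduces to comparing two winding paths which solve ODEs with the \emph{same} driving noise and coefficients differing by $\O(N^{-1/2+\epsilon})$. A Gr\"onwall estimate in the radial coordinate together with Burkholder--Davis--Gundy and exponential-martingale bounds for the angular coordinate should show that, on the spectral window $|\lambda|\lesssim N^{1/2-\epsilon}$, the two winding paths stay within $\O((1+k^2)N^{-1/2+\epsilon})$ of each other with overwhelming probability; translating this back into an eigenvalue difference and applying a union bound (Borel--Cantelli) over $|k|\le N^{1/2-\epsilon}$ then yields the random integer $\mathrm{N}_\epsilon$ and the bound $|\frac{N}{2\pi}\theta_k-\lambda_k|\le(1+k^2)N^{-1/2+\epsilon}$.

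I expect the last step to be the main obstacle: the discrepancy between the two phase ODEs accumulates over a ``time'' that grows with the spectral parameter and is further amplified by the $\lambda$-dependence of the carousel dynamics, so one needs stochastic-analysis estimates that are quantitative and uniform in $\lambda$ over a range growing with $N$ — this uniform control is exactly what is established in \cite{VV18}.
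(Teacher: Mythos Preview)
Your reading is correct: the paper does not prove Theorem~\ref{thm:coupling} at all---it is imported wholesale from \cite[Corollary~2]{VV18} and used as a black box in Section~\ref{sect:VV}. Your sketch of the operator/carousel argument behind \cite{VV18} is a reasonable high-level summary of their method, so there is nothing to compare against here.
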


As a consequence of the coupling of Theorem~\ref{thm:coupling}  from  \cite{VV18} and Theorem~\ref{clt:meso}, we easily obtain the following result.
The details of the proof will be given in section~\ref{sect:VV}.

\begin{theorem} \label{clt:sine}
Let $(\lambda_k)_{k\in\Z}$ be a configuration of the 
Sine$_\beta$ process and let $w\in \Co^{3+\alpha}_c(\R)$  for some $\alpha>0$. 
We have as $\nu \to+\infty$
\begin{equation*} 
\sum_{k\in \Z} w(\lambda_k \nu^{-1}) -  \nu \int_\R w dx\ \to\  \mathscr{N}\big(0,\tfrac{2}{\beta}\| w\|_{H^{1/2}(\R)}^2\big).
\end{equation*}
The convergence holds  in distribution and the limiting variance \eqref{var} is the same as in Theorem~\ref{clt:meso}. 
\end{theorem}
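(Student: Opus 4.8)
The plan is to transfer the mesoscopic CLT (Theorem~\ref{clt:meso}) to the Sine$_\beta$ process through the coupling of Theorem~\ref{thm:coupling}. Fix $w\in\Co^{3+\alpha}_c(\R)$ with $\operatorname{supp}w\subset[-M,M]$ and, given $\nu>0$, set $N=N(\nu):=\lceil\nu^5\rceil$ and $L=L(\nu):=\tfrac{N(\nu)}{2\pi\nu}$, so that $N(\nu)\to+\infty$ polynomially in $\nu$. For $\nu$ large one has $L>M/\pi$, hence the $2\pi$--periodisation $w_L$ of $x\mapsto w(Lx)$ is unambiguously defined (its support, a neighbourhood of $0\in\T$ of radius $M/L$, fits strictly inside one period), belongs to $\Co^{3+\alpha}(\T)$, and the change of variables $x=L\theta$ gives $N\int_\T w_L(\theta)\tfrac{d\theta}{2\pi}=\tfrac{N}{2\pi L}\int_\R w\,dx=\nu\int_\R w\,dx$. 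With this choice $L(\nu)\to+\infty$ and $N(\nu)^{-1}L(\nu)(\log N(\nu))^3=\tfrac{(\log N(\nu))^3}{2\pi\nu}\to0$, so Theorem~\ref{clt:meso} applies and shows that, under $\P^\beta_{N(\nu)}$, the random variables
\begin{equation*}
\Sigma_\nu:=\int w_L\, d\widetilde\mu_{N(\nu)}=\sum_{j=1}^{N(\nu)}w_L(\theta_j)-\nu\int_\R w\,dx
\end{equation*}
converge in law to $\mathscr N\big(0,\tfrac2\beta\|w\|^2_{H^{1/2}(\R)}\big)$ as $\nu\to+\infty$.

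Next, realise $\P^\beta_{N(\nu)}$ and the Sine$_\beta$ process on the common probability space $\P$ of Theorem~\ref{thm:coupling} and compare $\Sigma_\nu$ with $S_\nu:=\sum_{k\in\Z}w(\lambda_k\nu^{-1})-\nu\int_\R w\,dx$; both are a.s.\ finite sums. Unfolding the periodic extension $\theta_{k+\ell N}=\theta_k+2\pi\ell$, the parts of $\operatorname{supp}w_L\subset\T$ lying near $0$ and near $2\pi$ correspond --- through the rigidity estimate~\eqref{rig} for $\P^\beta_{N(\nu)}$ and a crude bound on the Sine$_\beta$ counting function --- to indices $k\ge1$ and $k\le0$ of the extended configuration $(\theta^{\mathrm{ext}}_k)$; concretely, with probability tending to one \emph{every} index contributing to $S_\nu$ or to $\Sigma_\nu$ lies in $J_\nu:=\{k\in\Z:|k|\le2M\nu\}$, and under the injection $k\mapsto j$ ($j=k$ for $k\ge1$, $j=k+N$ for $k\le0$, which maps into $\{1,\dots,N\}$ since $4M\nu<N(\nu)$) one has $w_L(\theta_j)=w\big(\tfrac{N}{2\pi\nu}\theta^{\mathrm{ext}}_k\big)$. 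Since $2M\nu\le N(\nu)^{1/2-\epsilon}$ for $\nu$ large (fix $\epsilon=\tfrac1{20}$) and $\mathrm N_\epsilon<\infty$ almost surely, Theorem~\ref{thm:coupling} gives $|\lambda_k-\tfrac{N}{2\pi}\theta^{\mathrm{ext}}_k|\le\tfrac{1+k^2}{N^{1/2-\epsilon}}$ for every $k\in J_\nu$ once $\nu$ is large, so by the mean value theorem
\begin{equation*}
|S_\nu-\Sigma_\nu|=\Big|\sum_{k\in J_\nu}\Big[w\big(\tfrac{\lambda_k}{\nu}\big)-w\big(\tfrac{N\theta^{\mathrm{ext}}_k}{2\pi\nu}\big)\Big]\Big|\le\frac{\|w'\|_{L^\infty}}{\nu}\sum_{k\in J_\nu}\Big|\lambda_k-\tfrac{N}{2\pi}\theta^{\mathrm{ext}}_k\Big|\le\frac{\|w'\|_{L^\infty}}{\nu\,N(\nu)^{1/2-\epsilon}}\sum_{|k|\le2M\nu}(1+k^2)\lesssim\frac{\nu^2}{N(\nu)^{1/2-\epsilon}},
\end{equation*}
which tends to $0$ because $N(\nu)\sim\nu^5$ and $\epsilon=\tfrac1{20}$. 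Hence $S_\nu-\Sigma_\nu\to0$ in $\P$--probability, and together with the previous step Slutsky's lemma yields $S_\nu\to\mathscr N\big(0,\tfrac2\beta\|w\|^2_{H^{1/2}(\R)}\big)$ in law, which is the claim, with limiting variance~\eqref{var}.

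Two points need care rather than being purely mechanical. First, the growth of $N(\nu)$ must be matched to the quadratic-in-$k$ error of the coupling: summing $1+k^2$ over $|k|\le2M\nu$ costs $\nu^3$, so one needs $N^{1/2-\epsilon}\gg\nu^2$, which is fully compatible with the scale constraint $(\log N)^3\ll\nu$ of Theorem~\ref{clt:meso} since taking $N$ polynomial in $\nu$ only loses a polylogarithmic factor there. Second, the bookkeeping of the unfolding: one must check that no index contributes to exactly one of $S_\nu,\Sigma_\nu$ apart from a boundary set on which $w$ is small --- this is automatic once the sum is extended to all of $J_\nu$, the extra terms being either zero or bounded by $\|w'\|_{L^\infty}|\lambda_k-\tfrac{N}{2\pi}\theta^{\mathrm{ext}}_k|/\nu$. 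The only genuinely external ingredient is the Valk\'o--Vir\'ag coupling of Theorem~\ref{thm:coupling}; I expect the unfolding bookkeeping, though modest, to be the main obstacle.
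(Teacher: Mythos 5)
Your proposal is correct and follows essentially the same route as the paper: transfer of Theorem~\ref{clt:meso} through the Valk\'o--Vir\'ag coupling of Theorem~\ref{thm:coupling}, a truncation of both sums to $O(\nu)$ indices (the content of the paper's Lemma~\ref{lem:trunc}, which uses rigidity, the coupling at a single boundary index and the monotone labelling), a mean-value comparison, and Slutsky's lemma. The only differences are cosmetic: the paper takes $N(\nu)=\lfloor\exp(\nu^{1/4})\rfloor$ and bounds the discrepancy by $2\nu\max_{|k|<\nu}\big|\tfrac{N}{2\pi}\theta_k-\lambda_k\big|\,\|w'\|_{L^\infty(\R)}$, whereas you take $N(\nu)\sim\nu^{5}$ and sum the coupling errors $\tfrac{1+k^2}{N^{1/2-\epsilon}}$ directly; both choices respect the scale restriction of Theorem~\ref{clt:meso} and make the coupling error vanish.
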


Let us mention that for $\beta=2$, other couplings  between the CUE and 
Sine$_2$ existed prior to \cite{VV17,VV18}.  
For instance, the work of Maple--Najnudel--Nikeghbali \cite{MNN18} based on \emph{virtual isometries}  and the work of Meckes--Meckes \cite{MM16} which uses the determinantal structure of these processes. 
Moreover, it is possible to obtain Theorem~\ref{clt:sine} directly  by using the determinantal structure of the Sine$_2$ process,  see Kac \cite{Kac54} and Soshnikov \cite{Soshnikov00}. 

\medskip

Finally, it should be mentioned that there have been plenty of recent developments in the study of the Sine$_\beta$ for general $\beta>0$.  
Using the SDE representation, large deviation estimates  for the number of eigenvalues in a box were obtained in \cite{VV10, HV15, HV17}, as well as a CLT  in \cite[Theorem 17]{KVV12}. The rigidity property for Sine$_\beta$ in the sense of Gosh--Peres was proved by Chhaibi--Najnudel \cite{CN18} and Holcomb--Paquette  \cite{HP18} computed the leading order of the maximum eigenvalues counting function. 
Finally, Lebl\'e \cite{Leble18} gave  recently an alternate proof of Theorem~\ref{clt:sine} for test functions of class $\Co^{4}_c(\R)$ which relies on the \emph{DLR equations} for the Sine$_\beta$ process established by Dereudre-Hardy-Lebl\'e-Ma\"ida \cite{DHLM}.

\subsection{Organization of the paper}

In section~\ref{sect:CLT}, we prove our main results Theorems \ref{clt:meso} and \ref{clt:global} by using the method of \emph{loop equation} which we review in section~\ref{sect:leq}.
In section~\ref{sect:chaos}, we discuss applications from the perspective of Gaussian multiplicative chaos.  
Specifically, in sections~\ref{sect:gmc1} and~\ref{sect:gmc2}, we explain how to modify the proof of  Theorem~\ref{clt:meso} in order to obtain Theorem~\ref{thm:gmc1} and Theorem~\ref{thm:gmc2} respectively. Then, we give the proofs of Propositions~\ref{prop:TP1} and~\ref{prop:TP2} in section~\ref{sect:TP}.
In section~\ref{sect:LD}, we obtain rigidity results for the circular $\beta$--ensemble  by studying the large deviations of the eigenvalue counting function.
In particular, we prove Proposition~\ref{prop:rig} which is a key input in our proof of Theorem \ref{clt:meso}. 
Finally, in section~\ref{sect:VV}, we give the short proof of Theorem~\ref{clt:sine}. 
\subsection{Acknowledgment}

G.L. is  supported by the University of Zurich Forschungskredit grant FK-17-112 and  by the grant SNSF Ambizione  S-71114-05-01.
G.L. wishes to thank E. Paquette and C. Webb for interesting discussions about the problems studied in this article.

\section{Proof of Theorem~\ref{clt:meso}} \label{sect:CLT}

\subsection{Loop equation} \label{sect:leq}

\begin{lemma}[Loop equation] \label{lem:Leq}
Let $w\in C^1(\T)$ and  $\P^\beta_{N, w}$ be as in \eqref{biasmeasure}. 
Recall that we let $\mu_N = \sum_{j=1}^N \delta_{\theta_j}$.
For any $g\in C^1(\T)$ and any $N\in\N$,  we have
\[
\E^\beta_{N, w} \left[ \frac{\beta}{2} \iint \frac{g(x)-g(t)}{2 \tan(\frac{x-t}{2})} \mu_N(dx) \mu_N(dt)
+ (1-\frac \beta 2) \int g' d\mu_N + \int gw' d\mu_N  \right] =0 . 
\]
\end{lemma}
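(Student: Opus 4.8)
The plan is to derive the identity from the invariance of the probability measure $\P^\beta_{N,w}$ under smooth reparametrizations of the circle; this is the standard ``change of variables'' (Ward identity) derivation of the loop equation, adapted to $\T$. Since the density \eqref{CbN} is symmetric under permutations of $(\theta_1,\dots,\theta_N)$, I first regard $\P^\beta_{N}$, and hence $\P^\beta_{N,w}$, as a symmetric probability measure on $\T^N$ rather than on the ordered simplex; this removes any issue with the ``cut'' at $0\equiv 2\pi$ when the configuration is deformed. Fix $g\in C^1(\T)$ and, for $\epsilon\in\R$ with $|\epsilon|$ small enough that $1+\epsilon g'>0$ on $\T$, let $\phi_\epsilon(\theta)=\theta+\epsilon g(\theta)$, an orientation--preserving diffeomorphism of $\T$. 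Changing variables $\theta_j=\phi_\epsilon(x_j)$ in $\int d\P^\beta_{N,w}=1$, so that the Vandermonde factor, the factor $e^{\int w\,d\mu_N}$ and the Lebesgue measure $\prod_j\frac{d\theta_j}{2\pi}$ all transform (the last one producing the Jacobian $\prod_j(1+\epsilon g'(x_j))$), gives for all small $\epsilon$
\[
\E^\beta_{N,w}\!\left[\exp\!\Big(\beta\!\!\sum_{j<k}\log\Big|\tfrac{e^{\i\phi_\epsilon(\theta_j)}-e^{\i\phi_\epsilon(\theta_k)}}{e^{\i\theta_j}-e^{\i\theta_k}}\Big|+\sum_{j}\big(w(\phi_\epsilon(\theta_j))-w(\theta_j)\big)\Big)\prod_{j=1}^{N}\big(1+\epsilon g'(\theta_j)\big)\right]=1 .
\]
Because $\P^\beta_{N,w}$ charges only configurations with pairwise distinct points (for $\beta>0$), and because $\phi_\epsilon$ is bi--Lipschitz uniformly in $|\epsilon|\le\epsilon_0$, the ratio inside the exponential and its $\epsilon$--derivative are bounded uniformly (even when $\theta_j-\theta_k$ is small, both being $O(1)$), so one may differentiate under the expectation. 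Differentiating at $\epsilon=0$ yields
\[
0=\E^\beta_{N,w}\!\left[\beta\!\!\sum_{j<k}\frac{d}{d\epsilon}\Big|_{\epsilon=0}\log\big|e^{\i\phi_\epsilon(\theta_j)}-e^{\i\phi_\epsilon(\theta_k)}\big|+\sum_{j}w'(\theta_j)g(\theta_j)+\sum_{j}g'(\theta_j)\right],
\]
where the last two sums are $\int g w'\,d\mu_N$ and $\int g'\,d\mu_N$.

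The computation that matters is the first term. For $x\ne t$, writing $e^{\i\phi_\epsilon(x)}=e^{\i x}e^{\i\epsilon g(x)}$ and using $\frac{d}{d\epsilon}\big|_{0}\log|F(\epsilon)|=\Re\big(F'(0)/F(0)\big)$,
\[
\frac{d}{d\epsilon}\Big|_{\epsilon=0}\log\big|e^{\i\phi_\epsilon(x)}-e^{\i\phi_\epsilon(t)}\big|=\Re\!\left[\frac{\i\big(g(x)e^{\i x}-g(t)e^{\i t}\big)}{e^{\i x}-e^{\i t}}\right].
\]
Splitting $g(x)e^{\i x}-g(t)e^{\i t}=\tfrac{g(x)+g(t)}{2}(e^{\i x}-e^{\i t})+\tfrac{g(x)-g(t)}{2}(e^{\i x}+e^{\i t})$ and using the elementary identity $\frac{e^{\i x}+e^{\i t}}{e^{\i x}-e^{\i t}}=-\i\cot\!\big(\tfrac{x-t}{2}\big)$, the right--hand side becomes $\Re\big[\i\tfrac{g(x)+g(t)}{2}\big]+\Re\big[\tfrac{g(x)-g(t)}{2\tan((x-t)/2)}\big]=\frac{g(x)-g(t)}{2\tan(\frac{x-t}{2})}$, since the first term is purely imaginary. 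As this kernel is symmetric in $(x,t)$, the first sum above equals $\frac{\beta}{2}\sum_{j\ne k}\frac{g(\theta_j)-g(\theta_k)}{2\tan((\theta_j-\theta_k)/2)}$.

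To finish, extend $K(x,t):=\frac{g(x)-g(t)}{2\tan(\frac{x-t}{2})}$ continuously to the diagonal by $K(x,x):=g'(x)$, legitimate since $g\in C^1(\T)$; then $\iint K\,\mu_N(dx)\mu_N(dt)=\sum_{j\ne k}K(\theta_j,\theta_k)+\int g'\,d\mu_N$, hence $\sum_{j\ne k}K(\theta_j,\theta_k)=\iint K\,\mu_N(dx)\mu_N(dt)-\int g'\,d\mu_N$. Substituting this and collecting the $\int g'\,d\mu_N$ terms (total coefficient $1-\tfrac{\beta}{2}$) gives exactly the stated loop equation. The heart of the argument is the trigonometric identity above; the one point that genuinely needs care is the justification of differentiating under the expectation near the diagonal $\{\theta_j=\theta_k\}$, which is why I prefer the change--of--variables route to the (equivalent) one of integrating $\partial_{\theta_j}\big(g(\theta_j)\,p_{N,w}\big)$ over $\T$ against the density $p_{N,w}$ of $\P^\beta_{N,w}$, using $\partial_{\theta_j}\log|e^{\i\theta_j}-e^{\i\theta_k}|=\tfrac{1}{2\tan((\theta_j-\theta_k)/2)}$ and summing over $j$; the latter requires handling the mild non--smoothness of $p_{N,w}$ on the diagonals.
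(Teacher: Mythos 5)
Your proof is correct: the trigonometric reduction of $\Re\big[\i(g(x)e^{\i x}-g(t)e^{\i t})/(e^{\i x}-e^{\i t})\big]$ to $\frac{g(x)-g(t)}{2\tan(\frac{x-t}{2})}$ is right, the Jacobian and $w$-terms produce $\int g'\,d\mu_N$ and $\int g w'\,d\mu_N$, and your bookkeeping of the diagonal (extending the kernel by $K(x,x)=g'(x)$, which is indeed the intended reading of the double integral in the Lemma) yields exactly the coefficient $1-\frac{\beta}{2}$. The route, however, is not the one the paper indicates: the paper proves the lemma by integration by parts against the explicit density, i.e.\ summing $\oint \partial_{\theta_j}\big(g(\theta_j)\,p_{N,w}\big)\,d\theta_j=0$ over $j$, using $\partial_{\theta_j}\log|e^{\i\theta_j}-e^{\i\theta_k}|=\frac{1}{2\tan((\theta_j-\theta_k)/2)}$ and then symmetrizing in $(j,k)$ — this is Johansson's formula (2.18) adapted to $\T$. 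Your change-of-variables (Ward identity) derivation via $\theta\mapsto\theta+\epsilon g(\theta)$ is equivalent (its $\epsilon$-derivative at $0$ \emph{is} the integration by parts), but it shifts where the analytic care goes: instead of checking absolute continuity of $p_{N,w}$ in each variable and integrability of $|2\sin u|^{\beta}\cot u\sim|u|^{\beta-1}$ across coincidence points (which does hold for every $\beta>0$, so the paper's route is not actually problematic), you must justify differentiating under the expectation, which your uniform bi-Lipschitz bound on $\phi_\epsilon$ and the estimate $|g(x)-g(t)|\lesssim|x-t|\lesssim|\phi_\epsilon(x)-\phi_\epsilon(t)|$ do correctly. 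A minor virtue of your version is that symmetrizing the measure over $\T^N$ and deforming by a circle diffeomorphism makes the periodicity/cut issue at $0\equiv 2\pi$ transparent; a minor cost is that the domination argument near the diagonal, while correct, is the one step a reader must check, whereas the integration-by-parts route only uses an elementary one-dimensional fact. Both arguments are valid for all $\beta>0$ and give the identity in the stated form.
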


The proof of Lemma~\ref{lem:Leq} is straightforward, it relies on the definition of the biased measure $\P^\beta_{N,w}$, the explicite density \eqref{CbN} and an integration by parts -- we refer to \cite[formula (2.18)]{Johansson98} for the  analogous formula  for $\beta$--ensembles on~$\R$. 
In order to obtain Theorem~\ref{clt:meso} from Lemma~\ref{lem:Leq},  we need to let\footnote{ $g= \U w$ is also called the harmonic conjugate of $w$. To understand why this choice is relevant, we refer to formula \eqref{W5} below.} $g= \U w$, the \emph{Hilbert transform} of the function $w$. 
The Hilbert transform  $\U$ on $L^2(\T)$  is a bounded operator defined in such a way that for any $k\in\Z$, 
\begin{equation} \label{UT}
\U(e^{\i k\theta}) =- \i \sgn(k)  e^{\i k\theta}  ,
\end{equation}
where $\sgn(k)= \pm$ if $k\in \Z_{\pm}$ and $\sgn(0) =0$.
This operator is invertible on $L^2_0(\T)$ with $\U^{-1} = - \U$ and it has the following integral representation: for any $ f \in \Co^\alpha(\T)$ with $\alpha>0$,
\begin{equation} \label{U}
\U f (x) =- \int_\T  \frac{f(x)-f(t)}{\tan(\frac{x-t}{2})} \frac{dt}{2\pi} ,
\qquad x\in\T.  
\end{equation}
Further properties of the Hilbert transform that we shall use in the proofs are recorded by the next Proposition.

\begin{proposition}\label{prop:U}
We have $(\widehat{\U f})_0 = 0$ for any $f\in L^2(\T)$. 
Moreover, if $f$ is differentiable with $f' \in L^2(\T)$, then 
$(\U f)' = \U(f')$ and $\| (\U f)' \|_{L^2(\T)} = \|f'\|_{L^2(\T)}$. 
In particular, this implies that the function $\U f$ is absolutely continuous on $\T$ and $\| \U f\| \le \sqrt{2\pi} \|f'\|_{L^2(\T)}$. 
\end{proposition}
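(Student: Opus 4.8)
The plan is to establish Proposition~\ref{prop:U} by working on the Fourier side, where the Hilbert transform acts diagonally, and then transferring the $L^2$ identities to the pointwise statements about absolute continuity. Write $f = \sum_{k\in\Z} \widehat{f}_k e^{\i k\theta}$ in $L^2(\T)$. By the defining relation \eqref{UT} and linearity (plus continuity of $\U$ on $L^2$), $\U f = \sum_{k\in\Z} (-\i\sgn(k))\widehat{f}_k e^{\i k\theta}$, so the $k=0$ mode is annihilated and $(\widehat{\U f})_0 = 0$; this is immediate. For the derivative identity, assume $f' \in L^2(\T)$, which is equivalent to $\sum_k k^2 |\widehat{f}_k|^2 < +\infty$ and gives $\widehat{(f')}_k = \i k \widehat{f}_k$. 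Then $\U(f')$ has Fourier coefficients $(-\i\sgn(k))(\i k \widehat{f}_k) = |k|\,\widehat{f}_k = \i k\,((-\i\sgn(k))\widehat{f}_k)$, which are exactly the coefficients $\i k\, (\widehat{\U f})_k$ of the distributional derivative of $\U f$; since $\sum_k k^2|(\widehat{\U f})_k|^2 = \sum_k k^2|\widehat{f}_k|^2 < +\infty$, the function $\U f$ lies in $H^1(\T)$ and hence is (absolutely continuous with) an $L^2$ derivative, and $(\U f)' = \U(f')$ holds as an identity in $L^2(\T)$. The norm identity $\|(\U f)'\|_{L^2(\T)} = \|f'\|_{L^2(\T)}$ then follows from Parseval, since $||k|\widehat{f}_k| = |k|\,|\widehat{f}_k| = |\i k \widehat{f}_k|$ coefficientwise.

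For the final assertion, I would control the sup norm $\|\U f\|$ of the (now continuous) function $\U f$. Since $(\widehat{\U f})_0 = 0$, the function $\U f$ has zero mean, so it attains the value $0$ somewhere on $\T$, say at $x_0$. Then for any $x\in\T$, $|\U f(x)| = |\U f(x) - \U f(x_0)| = |\int_{x_0}^x (\U f)'(t)\,dt| \le \int_\T |(\U f)'(t)|\,dt = \|(\U f)'\|_{L^1(\T)}$. By Cauchy--Schwarz on $\T$ (total mass $2\pi$), $\|(\U f)'\|_{L^1(\T)} \le \sqrt{2\pi}\,\|(\U f)'\|_{L^2(\T)} = \sqrt{2\pi}\,\|f'\|_{L^2(\T)}$, which is the claimed bound. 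The absolute continuity used here is exactly the membership of $\U f$ in $H^1(\T) \subset W^{1,1}(\T)$ established in the previous step, which justifies the fundamental theorem of calculus.

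The only genuinely delicate point is making sure the passage from the $L^2$-level statements to the pointwise ones is clean: one must check that $f' \in L^2(\T)$ really does force $\U f \in H^1(\T)$ (this is the Parseval computation $\sum k^2 |(\widehat{\U f})_k|^2 = \sum k^2|\widehat{f}_k|^2$, using $\sgn(k)^2 = 1$ for $k\neq 0$ and that the $k=0$ term contributes nothing), and that $H^1$ functions on $\T$ admit an absolutely continuous representative for which the fundamental theorem of calculus applies — both are standard facts about one-dimensional Sobolev spaces. A secondary technical care is the hypothesis under which \eqref{U} itself is used: for $f\in\Co^\alpha(\T)$ the singular integral converges and represents $\U f$, but for the proposition as stated we only need the Fourier characterization \eqref{UT}, so I would phrase the argument entirely through \eqref{UT} and Parseval and not invoke \eqref{U} at all, which keeps the proof short and avoids any issue about convergence of the principal-value integral.
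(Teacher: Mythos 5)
Your proof is correct; the paper in fact omits the argument for Proposition~\ref{prop:U} (declaring it "easy to verify"), and your Fourier-side verification through \eqref{UT} and Parseval, transferring to absolute continuity via the one-dimensional Sobolev embedding $H^1(\T)\subset W^{1,1}(\T)$, is exactly the standard route the statement is meant to be checked by, including the correct constant $\sqrt{2\pi}$ from Cauchy--Schwarz. The only microscopic caveat is that your "zero mean forces a zero of $\U f$" step uses that $\U f$ is real-valued (true for the real test functions considered in the paper); for complex-valued $f$ one instead writes $\U f(x)=\frac{1}{2\pi}\int_\T\bigl(\U f(x)-\U f(t)\bigr)\,dt$ and bounds this by $\|(\U f)'\|_{L^1(\T)}\le\sqrt{2\pi}\,\|f'\|_{L^2(\T)}$, reaching the same conclusion.
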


These basic properties are easy to verify, so we skip the proof of Proposition~\ref{prop:U}.
Our CLT follows from the following lemma and  technical estimates on the random variable \eqref{W4} that we discuss in sections~\ref{sect:globalclt} and~\ref{sect:mesoclt}.

\begin{lemma}\label{lem:Jo}
Let $\widetilde{\mu}_N$ be as in \eqref{LS}.
Let  $w\in \Co^{2}(\T)$ be a function which may depend on $N\in\N$, let  $g=\U w$  and define for any $t>0$, 
\begin{equation} \label{W4}
\widetilde{\W}_N(w) =
\frac{\beta}{2} \iint \frac{g(x)-g(t)}{2 \tan(\frac{x-t}{2})} \widetilde{\mu}_N(dx) \widetilde{\mu}_N(dt) + (1-\frac \beta 2) \int g' d\widetilde{\mu}_N + t \int gw' d\widetilde{\mu}_N . 
\end{equation}
If $ \displaystyle  \delta_N(w)  =  \frac{2}{\beta N}  \sup_{t\in [0,1]} \left|  \E^\beta_{N, tw}[ \widetilde{\W}_N ] \right|$ and $\sigma^2(w)$ is given by formula \eqref{sigma}, then 
\vspace*{-.3cm}
\begin{equation*} 
\left| \log \E^\beta_N\left[ e^{ \int w d\widetilde{\mu}_N} \right] - \beta^{-1}\sigma^2(w) \right| \le \delta_N .
\end{equation*}
\end{lemma}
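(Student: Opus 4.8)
The plan is to differentiate the logarithm of the Laplace transform with respect to the strength of the test function, reduce the derivative to the quantity $\widetilde{\W}_N(w)$ via the loop equation (Lemma~\ref{lem:Leq}), and then identify the resulting leading term with $\beta^{-1}\sigma^2(w)$. Concretely, for $t\in[0,1]$ set $\varphi(t) = \log \E^\beta_N[e^{t\int w\,d\widetilde\mu_N}]$, so that $\varphi(0)=0$ and we want to estimate $\varphi(1)$. By the standard exponential-tilting identity, $\varphi'(t) = \E^\beta_{N,tw}[\int w\,d\widetilde\mu_N]$. Now I apply Lemma~\ref{lem:Leq} with biasing function $tw$ and test function $g = \U w$: since $\int g w'\,d\mu_N$ differs from $\int g w'\,d\widetilde\mu_N$ only by the deterministic constant $N\widehat{(gw')}_0$, and similarly for the double integral and the $g'$ term, the loop equation rewrites $\E^\beta_{N,tw}[\,\cdot\,]$ in terms of the centered statistic $\widetilde\W_N(w)$ (with the parameter also called $t$ in \eqref{W4}) plus deterministic corrections. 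The key algebraic point is that the deterministic constants combine so that the $t\int g w'\,d\widetilde\mu_N$ piece, when $g=\U w$, produces exactly $\frac{\beta N}{2}\cdot(\text{something})$ matching $\widehat{w}_k$-bilinear forms.

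The heart of the computation is the identity, valid for $g = \U w$ with $w\in\Co^2(\T)$,
\begin{equation*}
  \frac{\beta}{2}\iint \frac{g(x)-g(t)}{2\tan(\frac{x-t}{2})}\,\frac{d x}{2\pi}\,N\,\frac{d t}{2\pi}\cdot 2 N
  \;+\;\text{(lower order)}\;=\;\frac{\beta N}{2}\sum_{k\neq 0}|k|\,|\widehat w_k|^2 + \cdots,
\end{equation*}
i.e. the $\widetilde\mu_N$-vs-$\frac{Nd\theta}{2\pi}$ cross terms in the double integral of \eqref{W4}, together with the $\int g w'\,d\widetilde\mu_N$ term, contribute the deterministic quantity $\frac{N}{2}\sigma^2(w)$ after using $\widehat{(\U w)}_k = -\i\,\sgn(k)\widehat w_k$ and $\widehat{w'}_k = \i k \widehat w_k$. (The term $\int \frac{g(x)-g(t)}{2\tan}\frac{Nd\theta}{2\pi}\frac{Nd\theta}{2\pi}$ vanishes by antisymmetry, and the $(1-\beta/2)\int g'\,d\widetilde\mu_N$ term averages to a deterministic constant times $N$ times $\widehat{g'}_0=0$, hence $0$.) Tracking these constants carefully, one arrives at
\begin{equation*}
  \varphi'(t) \;=\; \frac{2}{\beta N}\,t^{-1}\Big(\E^\beta_{N,tw}[\widetilde\W_N(w)] \;-\; \tfrac{N}{2}\sigma^2(w)\cdot(\text{coeff})\Big),
\end{equation*}
or more cleanly, after choosing the normalisation in \eqref{W4} so that the bilinear-form parameter equals $t$: $\varphi'(t) = \beta^{-1}\sigma^2(w) - \frac{2}{\beta N}\E^\beta_{N,tw}[\widetilde\W_N(w)]$, where the sign and the factor are fixed precisely so that the deterministic part integrates to $\beta^{-1}\sigma^2(w)$.

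Granting this, the proof concludes by integrating over $t\in[0,1]$:
\begin{equation*}
  \varphi(1) - \beta^{-1}\sigma^2(w) = -\frac{2}{\beta N}\int_0^1 \E^\beta_{N,tw}[\widetilde\W_N(w)]\,d t,
\end{equation*}
and bounding the right-hand side in absolute value by $\sup_{t\in[0,1]}\frac{2}{\beta N}|\E^\beta_{N,tw}[\widetilde\W_N]| = \delta_N(w)$. I expect the main obstacle to be purely bookkeeping: getting every deterministic constant right in the passage from $\mu_N$ to $\widetilde\mu_N$ in all three terms of the loop equation, and verifying that the $t$-dependence in \eqref{W4} is calibrated so that the $t^{-1}$ apparent singularity cancels and the $\sigma^2(w)$ term emerges with the correct prefactor $\beta^{-1}$. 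One has to be careful that the harmonic-conjugate choice $g=\U w$ is exactly what makes the cross term and the $\int g w'$ term conspire into $\frac12 \sigma^2(w)$ rather than some other quadratic form in $\widehat w$; this uses $\|(\U f)'\|_{L^2} = \|f'\|_{L^2}$ from Proposition~\ref{prop:U} together with $\|f'\|_{L^2(\T)}^2 = \sum_k k^2|\widehat f_k|^2$ and the Fourier identities above. No analytic input beyond Lemma~\ref{lem:Leq} and elementary Fourier analysis is needed here — the genuinely hard estimates on $\E^\beta_{N,tw}[\widetilde\W_N]$, which invoke the rigidity bounds of Proposition~\ref{prop:rig}, are deferred to sections~\ref{sect:globalclt} and~\ref{sect:mesoclt}.
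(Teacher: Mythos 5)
Your overall strategy is the paper's: differentiate $\varphi(t)=\log\E^\beta_N\big[e^{t\int w\,d\widetilde{\mu}_N}\big]$, apply the loop equation with biasing function $tw$ and test function $g=\U w$, identify $\sigma^2(w)$ by Parseval, integrate over $t\in[0,1]$ and bound the remainder by $\delta_N$. But the central step as you wrote it is not correct. The cross terms $\widetilde{\mu}_N\otimes\frac{N\,dx}{2\pi}$ in the double integral are \emph{not} deterministic and do not ``contribute $\frac N2\sigma^2(w)$'': by the integral representation \eqref{U} they equal the random linear statistic $\frac{N\beta}{2}\int \U g\,d\widetilde{\mu}_N=-\frac{N\beta}{2}\int w\,d\widetilde{\mu}_N$ (since $\U g=\U\U w=-w$ up to an additive constant, and constants integrate to zero against $\widetilde{\mu}_N$). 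This term is the whole point of the choice $g=\U w$: its tilted expectation is $-\frac{N\beta}{2}\varphi'(t)$, and it is precisely what lets you solve the loop equation for $\varphi'(t)$. The quantity $\sigma^2(w)$ comes only from the deterministic part of the third term, $tN\int g w'\,\frac{dx}{2\pi}=tN\sigma^2(w)$ (Parseval together with \eqref{UT}), while the purely Lebesgue--Lebesgue part of the double integral vanishes and $(\widehat{g'})_0=0$ kills the shift of the $g'$ term; your parenthetical remarks on these two points are right, but they contradict your displayed ``heart of the computation'', which as written is false.

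Consequently the clean identity you propose, $\varphi'(t)=\beta^{-1}\sigma^2(w)-\frac{2}{\beta N}\E^\beta_{N,tw}[\widetilde{\W}_N]$, is also wrong: the correct decomposition $\W_N=\frac{N\beta}{2}\int\U g\,d\widetilde{\mu}_N+tN\sigma^2(w)+\widetilde{\W}_N$ combined with $\E^\beta_{N,tw}[\W_N]=0$ gives $\varphi'(t)=\frac{2t}{\beta}\sigma^2(w)+\frac{2}{\beta N}\E^\beta_{N,tw}[\widetilde{\W}_N]$, which is linear in $t$ and has the opposite sign on the error term; the $t$ in \eqref{W4} sits in front of the \emph{fluctuating} part $\int gw'\,d\widetilde{\mu}_N$, not as a knob that can make the deterministic part constant, and your first display with the spurious factor $t^{-1}$ has no counterpart in the actual computation. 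The prefactor $\beta^{-1}$ is produced by the integration $\int_0^1 2t\,dt=1$, not by calibration. Once this bookkeeping is repaired, the conclusion goes through exactly as you say (and as in the paper): integrating over $t\in[0,1]$ and bounding by $\sup_{t\in[0,1]}\frac{2}{\beta N}\big|\E^\beta_{N,tw}[\widetilde{\W}_N]\big|=\delta_N(w)$ yields the claim, the sign discrepancy being immaterial for the absolute-value bound.
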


\begin{proof}
The result of Lemma~\ref{lem:Jo} is  classical, we give a quick proof for completeness. 
Let 
\begin{equation} \label{W1}
\W_N = \frac{\beta}{2} \iint \frac{g(x)-g(u)}{2 \tan(\frac{x-u}{2})} \mu_N(dx) \mu_N(du) + (1-\frac \beta 2) \int g' d\mu_N +  t \int gw' d\mu_N . 
\end{equation}
First of all, we observe that replacing
$ \mu_N(dx) =  \widetilde{\mu}_N(dx) + \frac{dx}{2\pi}$ in formula \eqref{W1}, 
by \eqref{U} and since $(\widehat{g'})_0 =0$, we obtain
\begin{equation} \label{W5}
\W_N =  \frac{N\beta}{2} \int  \U g  d\widetilde{\mu}_N -   \frac{N^2\beta}{4} \int  \U g(x) \frac{dx}{2\pi} + t N  \int g(x)w'(x)  \frac{dx}{2\pi} 
+   \widetilde{\W}_N ,
\end{equation}
where $\widetilde{\W}_N$ is given by \eqref{W4}.
Since $(\widehat{\U g})_0 = 0$  and $\E^\beta_{N, t w} \left[  \W_N \right] =  0$ for any $t>0$  by Lemma~\ref{lem:Leq},  this implies that 
\begin{equation} \label{W2}
-  \E^\beta_{N, w}\left[ \int \U g d\widetilde{\mu}_N \right]   =  \frac{2t}{\beta}  \int g(x)w'(x)  \frac{dx}{2\pi}+  \frac{2}{\beta N} \E^\beta_{N, w}[\widetilde{\W}_N ] . 
\end{equation}

Now, by Parseval's theorem and \eqref{UT}, observe that  according to formula \eqref{sigma}, we have
\begin{equation} \label{sigmaU}
 \int_\T g(x)w'(x)  \frac{dx}{2\pi} =  \sum_{k\in\Z} (-\i k) \widehat{w}_k \overline{\widehat{g}_k} =  \sum_{k\in\Z}  |k| |  \widehat{w}_k |^2 = \sigma(w)^2 .
 \end{equation}
Since $\U g = -w $ by definition of the Hilbert transform, by \eqref{W2}--\eqref{sigmaU}, we obtain 
\begin{equation} \label{le3}
\left|  \E^\beta_{N, tw}\left[ \int w d\widetilde{\mu}_N \right]   -  \frac{2t}{\beta} \sigma^2(w)  \right| \le \delta_N . 
\end{equation}
Now, by \cite[formula (2.16)]{Johansson98},  observe that for any $t\in (0,1]$,
\begin{equation*} 
\frac{\d}{\d t} \log \E^\beta_N\left[ e^{ t\int w d\widetilde{\mu}_N} \right]
= \E^\beta_{N, tw}\left[ \int w d\widetilde{\mu}_N \right] .
\end{equation*}
So, if we integrate the LHS of \eqref{le3} with respect to $t\in (0,1]$, this completes the proof.
\end{proof}

Hence, in order to prove Theorems~\ref{clt:meso} and~\ref{clt:global}, we have to estimate the error term $\delta_N$ from Lemma~\ref{lem:Jo}  in the mesoscopic, respectively global, regimes. 
This will be done carefully in the next two sections. 

\subsection{Estimates in the global regime: Proof of Theorem~\ref{clt:global}}
\label{sect:globalclt}

In this section, we use our rigidity estimates form Proposition~\ref{prop:rig} to estimate the error term in Lemma~\ref{lem:Jo}. 

\begin{proposition} \label{prop:W}
Let $w\in \Co^{3+\alpha}(\T)$ for some $\alpha>0$ be a function which may depend on $N\in\N$ in such a way that  $\| w'\|_{L^1(\T)} \le c$ for some fixed $c\ge 1$ and  let  $g=\U w$. 
Let $N_\beta \in \N$ be as in Proposition~\ref{prop:rig} and  let $\delta_N(w)$ be as in Lemma~\ref{lem:Jo}.  There exists  a constant $C_\beta>0$ which only depends on $\beta>0$ and $c>0$ such that all $N\ge N_\beta$ and $t\in[0,1]$, 
\begin{equation} \label{W3}
\delta_N(w)
\le C_\beta \left(   R_0 \log N+ R_1 +  R_2  N^{-5} \right)\frac{\log N}{N}  ,
\end{equation}
where
\begin{equation} \label{R12}
R_1(w) =   \| g''\|_{L^1(\T)} + \| (gw')'\|_{L^1(\T)} , \qquad 
R_2(w) =  \| g'\|_{\infty} + \| g\|_\infty \| w'\|_\infty
\end{equation}
and 
\begin{equation} \label{R0}
R_0(w)=    \iint_{\T^2} \bigg|\frac{ g(x_1)-g(x_2) -  (g'(x_1) + g'(x_2)  \tan(\frac{x_1-x_2}{2})}{4 \sin^2(\frac{x_1-x_2}{2}) \tan(\frac{x_1-x_2}{2})}\bigg|\d x_1 \d x_2  . 
\end{equation}
\end{proposition}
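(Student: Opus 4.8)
The plan is to bound $\delta_N(w) = \frac{2}{\beta N} \sup_{t\in[0,1]} |\E^\beta_{N,tw}[\widetilde{\W}_N]|$ by separately controlling each of the three terms in the definition \eqref{W4} of $\widetilde{\W}_N$. The second and third terms are already linear statistics against $\widetilde{\mu}_N$, namely $(1-\tfrac\beta2)\int g'\,d\widetilde{\mu}_N$ and $t\int gw'\,d\widetilde{\mu}_N$, so the plan is to apply Proposition~\ref{prop:rig} with $n=1$ directly to these. For a $\Co^1$ function $f$ one has $|\int f\,d\widetilde{\mu}_N| = |\int f\,d\widetilde{\mu}_N|$ controlled with high probability by $\tfrac{\sqrt2}{\beta}\|f'\|_{L^1(\T)}\log N$ times a constant, under the biased measure $\P^\beta_{N,tw}$, since $\|(tw)'\|_{L^1}\le c$ is bounded. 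Taking $f = g'$ gives a contribution proportional to $\|g''\|_{L^1}\log N$ and $f = gw'$ gives $\|(gw')'\|_{L^1}\log N$; these feed into the $R_1\log N$-type terms. The $R_2 N^{-5}$ term will come from handling the complementary (small-probability) event, on which we bound the linear statistics crudely by $\|g'\|_\infty + \|g\|_\infty\|w'\|_\infty$ times $N$ (trivially $|\int f\,d\widetilde{\mu}_N|\le 2N\|f\|_\infty$), multiplied by the probability bound $\sqrt{\eta\log N}\,N^{1-\eta^2/2\beta}$ from Proposition~\ref{prop:rig}, which with $\eta$ of fixed size gives a power savings; choosing the implicit constants appropriately yields the $N^{-5}$.

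The main work is the double-integral (energy) term $\frac\beta2\iint \frac{g(x)-g(t)}{2\tan(\frac{x-t}{2})}\widetilde{\mu}_N(dx)\widetilde{\mu}_N(dt)$. The plan here is the standard loop-equation trick: one cannot directly apply the $n=2$ case of Proposition~\ref{prop:rig} because the kernel $\frac{g(x)-g(t)}{2\tan(\frac{x-t}{2})}$ is not smooth enough (it has only the regularity of $g$, and dividing by $\tan$ near the diagonal is delicate). Instead, one symmetrizes and uses the identity that on the diagonal the kernel equals $g'(x)$, then writes the double integral as an integral of a function that is genuinely $\Co^1$ on $\T^2$ after subtracting the diagonal contribution. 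Concretely, I would split
\[
\iint \frac{g(x)-g(t)}{2\tan(\frac{x-t}{2})}\widetilde{\mu}_N(dx)\widetilde{\mu}_N(dt) = \iint h(x,t)\,\widetilde{\mu}_N(dx)\widetilde{\mu}_N(dt),
\]
where $h(x,t) = \frac{g(x)-g(t)}{2\tan(\frac{x-t}{2})}$ extends continuously with $h(x,x) = \tfrac12 g'(x)$; one checks $h\in\Co^1(\T^2)$ using $g\in\Co^{2+\alpha}$ (which follows from $w\in\Co^{3+\alpha}$ via Proposition~\ref{prop:U}), and then applies Proposition~\ref{prop:rig} with $n=2$ after verifying that $\iint |\partial_x\partial_t h|\,dx\,dt$ is controlled by the quantity $R_0(w)$ in \eqref{R0} — the explicit form of $R_0$ is exactly what you get when you differentiate $h$ twice, since $\partial_x\partial_t\big(\frac{g(x)-g(t)}{2\tan((x-t)/2)}\big)$ produces the numerator $g(x_1)-g(x_2) - (g'(x_1)+g'(x_2))\tan(\frac{x_1-x_2}{2})$ over $4\sin^2(\frac{x_1-x_2}{2})\tan(\frac{x_1-x_2}{2})$. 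Then Proposition~\ref{prop:rig} with $n=2$ gives, on the high-probability event, a bound of order $R_0 (\log N)^2$, and combined with the prefactor $\frac{2}{\beta N}\cdot\frac\beta2 = \frac1N$ this contributes $R_0(\log N)^2/N$; absorbing one factor of $\log N$ into the stated $R_0\log N\cdot\frac{\log N}{N}$ form of \eqref{W3}. The low-probability event again contributes to the $R_2 N^{-5}$ term after a crude $\|g'\|_\infty$-type bound on the double integral (using $|h|\le\|g'\|_\infty$ uniformly) times $N^2$ times the small probability.

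The hard part will be the careful verification that $h(x,t)$ and its mixed partial derivative are controlled by the claimed quantities $R_0$, $R_1$ uniformly on $\T^2$, including near the diagonal where $\tan(\frac{x-t}{2})$ vanishes — this requires a Taylor expansion of $g$ to third order around the diagonal (hence the $\Co^{3+\alpha}$ hypothesis: $g = \U w$ is then $\Co^{2+\alpha}$, and one needs one more derivative of $w$ to make the diagonal terms in $\partial_x\partial_t h$ bounded, using the $\alpha$-Hölder continuity of the top derivative to get integrability of $R_0$). One must also be attentive that Proposition~\ref{prop:rig} is stated under the biased measure $\P^\beta_{N,tw}$ with the constraint $\|(tw)'\|_{L^1}\le\eta$, so $\eta$ here is essentially $c$ (fixed), which is why the exponent $1-\eta^2/2\beta$ is negative and bounded away from zero uniformly, giving room to produce $N^{-5}$ by adjusting constants; I would make the dependence of $C_\beta$ on $c$ and $\beta$ explicit as claimed. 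Finally, assembling the three pieces and taking the supremum over $t\in[0,1]$ (noting all bounds are uniform in $t$ since $\|(tw)'\|_{L^1}\le\|w'\|_{L^1}\le c$) yields \eqref{W3}.
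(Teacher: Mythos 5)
Your plan is the same as the paper's: split $\widetilde{\W}_N$ into its three terms, apply Proposition~\ref{prop:rig} with $n=1$ to $\int g'\,d\widetilde{\mu}_N$ and $\int gw'\,d\widetilde{\mu}_N$ (producing $\|g''\|_{L^1}$ and $\|(gw')'\|_{L^1}$, i.e.\ $R_1$), apply it with $n=2$ to the kernel $h(x,t)=\frac{g(x)-g(t)}{2\tan(\frac{x-t}{2})}$ after identifying $\iint|\partial_x\partial_t h|$ with $R_0$, and handle the exceptional event by the crude bounds $|\int f\,d\widetilde{\mu}_N|\le 2N\|f\|_\infty$ and $\sup|h|\le\|g'\|_\infty$, which is exactly how $R_2$ enters. (Minor slips: the diagonal value is $h(x,x)=g'(x)$, not $\tfrac12 g'(x)$, and membership of $h$ in $\Co^2(\T^2)$ -- as required by $\mathscr{F}_{2,R}$ -- uses $g\in\Co^3(\T)$, which Proposition~\ref{prop:U} gives from $w\in\Co^{3+\alpha}$; neither affects the structure.)

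There is, however, one genuine flaw: your explanation of where $N^{-5}$ comes from. You assert that ``$\eta$ here is essentially $c$ (fixed), which is why the exponent $1-\eta^2/2\beta$ is negative and bounded away from zero uniformly, giving room to produce $N^{-5}$ by adjusting constants.'' This is false on two counts. First, with $\eta\approx c$ the exponent $1-c^2/2\beta$ need not be negative at all (take $c=1$ and $\beta\ge 1$), and for the double-integral term the crude bound on the bad event is of order $N^2$, so the relevant exponent is $3-\eta^2/2\beta$, which with $\eta\approx c$ can be strictly positive -- the ``error'' term would then swamp everything. Second, even when the exponent is negative, no adjustment of multiplicative constants can upgrade a fixed power of $N$ to $N^{-5}$. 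The correct mechanism is that $\eta$ in Proposition~\ref{prop:rig} is a free parameter constrained only from below by $\|(tw)'\|_{L^1(\T)}\le c$, so you may (and must) choose it large depending on $\beta$: the paper takes $\eta=c+4\sqrt{\beta}$, which gives $\eta^2/2\beta\ge 8$ and hence $N^{3-\eta^2/2\beta}\le N^{-5}$, while the price is only that the main terms become $R_0(\eta\log N)^2$ and $R_1\,\eta\log N$, with $\eta$ absorbed into the constant $C_\beta$ depending on $\beta$ and $c$. With this correction your argument goes through and coincides with the paper's proof.
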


\begin{proof}
Since 
$ \displaystyle \Big| \int f d\widetilde{\mu}_N  \Big|  \le 2\| f\|_\infty N$
for any $f\in \Co(\T)$,  by Proposition~\ref{prop:rig} applied with $\eta \ge c$,  we obtain for all $N\ge N_\beta$, 
 \begin{align*} \notag
\E^\beta_{N, tw} \left[  \Big| \int g' d\widetilde{\mu}_N  \Big| \right] 
& \le 2\| g'\|_\infty N \P^\beta_{N, tw} \left[  \Big| \int g' d\widetilde{\mu}_N  \Big| \ge  \tfrac{\sqrt{2}}{\beta} \eta \| g''\|_{L^1(\T)}  \log N\right] + \tfrac{\sqrt{2}}{\beta} \eta  \| g''\|_{L^1(\T)}  \log N \\
& \le   2\| g'\|_\infty  \sqrt{\eta \log N} N^{2- \eta^2/2\beta} + \tfrac{\sqrt{2}}{\beta} \eta  \| g''\|_{L^1(\T)}  \log N
\end{align*}  
Similarly, we have
\begin{equation*}
\E^\beta_{N, w} \left[  \Big| \int gw' d\widetilde{\mu}_N  \Big| \right]  \le 
2 \| g\|_\infty \| w'\|_\infty \sqrt{\eta \log N} N^{2- \eta^2/2\beta}+ \tfrac{\sqrt{2}}{\beta} \eta  \| (gw')'\|_{L^1(\T)} \log N
\end{equation*}
and
\[
\E^\beta_{N, w} \left[  \Big| \iint \frac{g(x)-g(u)}{2 \tan(\frac{x-u}{2})} \widetilde{\mu}_N(dx) \widetilde{\mu}_N(du)  \Big| \right]  \le  
4  \| g'\|_\infty\sqrt{\eta \log N} N^{3- \eta^2/2\beta}
+  \frac{2}{\beta^2} \eta^2 R_0 ,
\]
where we used that $\sup_{x, u \in\T}  \Big| \frac{g(x)-g(u)}{2 \tan(\frac{x-u}{2})}  \Big| \le \| g'\|_\infty $ in Proposition~\ref{prop:rig} with $n=2$   and
\[
R_0 =   \iint_{\T^2} \bigg| \frac{\d}{\d x_1}\frac{\d}{\d x_2} \frac{ g(x_1)-g(x_2)}{2 \tan(\frac{x_1-x_2}{2})}\bigg|\d x_1 \d x_2  .
\]
By an explicit computation, we verify that $R_0$ is given by \eqref{R0}. According to \eqref{W4}, using the triangle inequality and collecting all the terms, we obtain that there exists a universal constant $C>0$ such that for all $N\ge N_\beta$ and $t\in[0,1]$, 
\begin{equation} \label{W6}
\E^\beta_{N, t w} \left[ | \widetilde{\W}_N | \right] 
 \le C \sqrt{\eta\log N} R_2(w) N^{3-\eta^2/2\beta} + \tfrac{2}{\beta^2}R_0(w) (\eta \log N)^2 +\tfrac{\sqrt{2}}{\beta} \eta R_1(w) \log N . 
\end{equation}
Taking $\eta= c+ 4\sqrt{\beta}$, since $ \displaystyle  \delta_N(w)  =  \frac{2}{\beta N}  \sup_{t\in [0,1]} \left|  \E^\beta_{N, tw}[ \widetilde{\W}_N ] \right|$,  we obtain the inequality \eqref{W3}. 
\end{proof}

We are now ready to give the proof of Theorem~\ref{clt:global}. 

\begin{proof}[Proof of Theorem~\ref{clt:global}] 
Since we assume that $w\in \Co^{3+\alpha}(\T)$, by Proposition~\ref{prop:U}, we have $g\in \Co^{3}(\T)$ and  the terms \eqref{R12} satisfy\footnote{This is a straightforward computation using that $\|f\|_{L^1(\T)} \le \sqrt{2\pi} \|f\|_{L^2(\T)} \le  2\pi \| f\|_\infty$.}
\[
R_1(w) , R_2(w) \le C (1 +\|w'\|_\infty^2 + \| w''\|_\infty + \|w'''\|_\infty )
\]
for some universal constant $C>0$. 
In order to estimate $R_0$, observe that by Taylor's theorem, the integrand in \eqref{R0} is uniformly bounded by $\|g'''\|_{\infty}$ so that
$R_0(w) \le C \|g'''\|_{\infty}$.
Combining these estimates with Lemma~\ref{lem:Jo},  
we obtain \eqref{clt:global} with 
\begin{equation} \label{Cw}
C_{\beta, w} = C_\beta(1+ \|w'\|_\infty^2 +  \|w'''\|_\infty +  \|\U w '''\|_{\infty} ) 
\end{equation} 
and $C_\beta =\frac{C}{\beta^{3/2}} (1+\frac{1}{\beta^{3/2}})$ for some universal constant $C>0$. 
This completes the proof.
\end{proof}

\subsection{Estimates in the mesoscopic regime: Proof of Theorem~\ref{clt:meso}} \label{sect:mesoclt}

In comparison to the argument given in the previous section, to obtain our mesoscopic CLT at small scales,  we need more precise estimates for the error $\delta_N(w_L)$, see \eqref{W3}, especially for the term $R_0$ \eqref{R0}.

\medskip

In this section, we fix $ w\in \Co_c^{3+\alpha}(\R)$ for some $\alpha>0$. 
Without loss of generality, we assume that $\operatorname{supp}(w) \subseteq [-\frac \pi2, \frac \pi2]$.
For any $L\ge 1$, we let $w_L(\cdot) = w(\cdot L)$.
We may treat $w_L$ has a $2\pi$--periodic function in $\Co^{3+\alpha}(\T)$ and  set $g_L = \U w_L$ where $\U$ is the Hilbert transform \eqref{U}. 
In particular,  $g_L \in  \Co^{3}(\T)$ by Proposition~\ref{prop:U}.

\medskip

For any $f\in\Co^\alpha(\R)$ for some $\alpha>0$ with  $\operatorname{supp}(f) \subseteq [-\frac \pi2, \frac \pi2]$,
 we define
\begin{equation} \label{UL}
\U_L f (x) = - \int_{-\pi L}^{\pi L} \frac{f(x) - f(t)}{2\pi L \tan\left(\frac{x-t}{2L}\right)}dt  ,
\qquad x\in\R .
\end{equation}
The following proposition is useful for our proof.

\begin{proposition} \label{prop:gL}
With the above convention, for any $k=0,1,2,3$ and for all $x\in [-\pi , \pi ]$,
\begin{equation} \label{gL}
 g_L^{(k)}  (x)  =   L^k \U_L(w^{(k)}) (xL)  ,
\end{equation}
where $g_L^{(k)}$, $w^{(k)}$ denotes the derivatives of the functions $g_L \in \Co^3(\T)$ and $w \in \Co^{3+\alpha}(\R)$ respectively.  
Moreover, we have
\begin{equation} \label{gLest}
\| g_L^{(k)} \|_{L^1(\T)}  \le r_{k ,w}  L^{k-1} \log(\pi L)
\end{equation}
where $r_{k,w} =  2 \| w^{(k)}\|_{L^1(\R)} + 2\pi c  \| w^{(k)}\|_\infty +   2\pi c_\alpha \| w^{(k)}\|_{\Co^\alpha}$ for $k=0,1,2,3$ and universal constants $c,c_\alpha>0$. 
\end{proposition}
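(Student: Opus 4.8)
The plan is to establish the identity \eqref{gL} first, and then derive the $L^1$-estimate \eqref{gLest} from it by splitting the integral defining $\U_L$ into a "local" part near the diagonal and a "tail" part. For the identity, I would start from the integral representation \eqref{U} of the Hilbert transform applied to $w_L$. Fix $x\in[-\pi,\pi]$; since $\operatorname{supp}(w_L)\subseteq[-\tfrac{\pi}{2L},\tfrac{\pi}{2L}]\subseteq[-\tfrac\pi2,\tfrac\pi2]$ (using $L\ge1$), the integrand $\tfrac{w_L(x)-w_L(t)}{\tan((x-t)/2)}$ is supported in $t\in[-\tfrac\pi2,\tfrac\pi2]$ after accounting for the pole at $t=x$, so that $\U w_L(x) = -\int_{-\pi}^\pi \tfrac{w_L(x)-w_L(t)}{\tan((x-t)/2)}\tfrac{dt}{2\pi}$. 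Now substitute $t = s/L$, $x = y/L$ with $y = xL$: this gives exactly $\U_L w(xL)$ by definition \eqref{UL} (the factor $L$ in $dt = ds/L$ combines with the $L$ appearing when we rewrite $\tan((x-t)/2)=\tan((y-s)/(2L))$ and the normalisation). For the higher derivatives $k=1,2,3$, I would differentiate \eqref{U} under the integral sign; the key structural point is that since $w$ (hence $w_L$) has compact support strictly inside $(-\pi,\pi)$, differentiating in $x$ and then integrating by parts to move derivatives onto $w_L(t)$ produces no boundary terms, and one checks that $g_L^{(k)}(x) = \U(w_L^{(k)})(x)$ — this is precisely Proposition~\ref{prop:U} ($(\U f)' = \U(f')$) applied $k$ times, valid because $w_L^{(j)}\in\Co^\alpha(\T)$ for $j\le 3$. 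Then $w_L^{(k)}(t) = L^k w^{(k)}(tL)$, and running the same substitution $t=s/L$ as before yields $\U(w_L^{(k)})(x) = L^k\U_L(w^{(k)})(xL)$, which is \eqref{gL}.

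For the estimate \eqref{gLest}, by \eqref{gL} and the change of variables $y=xL$ we have $\|g_L^{(k)}\|_{L^1(\T)} = \int_{-\pi}^\pi |g_L^{(k)}(x)|\,dx = L^{k-1}\int_{-\pi L}^{\pi L}|\U_L(w^{(k)})(y)|\,dy$, so it suffices to show $\int_{-\pi L}^{\pi L}|\U_L f(y)|\,dy \le r_{k,w}\log(\pi L)$ for $f = w^{(k)}$. Here I would split the kernel in \eqref{UL} into the region $|y-t|\le 1$ and $|y-t|>1$. On $|y-t|>1$, use $|\tan(\tfrac{y-t}{2L})|^{-1}\le \tfrac{2L}{|y-t|}\cdot C$ for $|y-t|\le \pi L$ (since $\tan\theta\ge \tfrac2\pi\theta\cdot\frac{\pi}{2}$-type bounds hold on $[0,\pi/2]$, giving $|\tan\theta|\gtrsim|\theta|$), so that $\big|\tfrac{f(y)-f(t)}{2\pi L\tan((y-t)/2L)}\big| \lesssim \tfrac{|f(y)|+|f(t)|}{|y-t|}$; integrating $|f(t)|/|y-t|$ over $1\le|y-t|\le\pi L$ and then over $y$ produces the $\log(\pi L)\cdot\|f\|_{L^1}$ contribution, while the $|f(y)|$ piece similarly gives $\log(\pi L)\cdot\|f\|_{L^1}$ after integrating in $y$. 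On the local region $|y-t|\le 1$, one instead uses Hölder continuity: $|f(y)-f(t)|\le \|f\|_{\Co^\alpha}|y-t|^\alpha$ (and $|\tan((y-t)/2L)|^{-1}\lesssim L/|y-t|$ again), so the kernel is bounded by $C|y-t|^{\alpha-1}$ which is integrable in $t$ over $[y-1,y+1]$ uniformly, contributing $C_\alpha\|f\|_{\Co^\alpha}$ per unit length in $y$, hence $2\pi c_\alpha\|f\|_{\Co^\alpha}$ after integrating $y$ over the support-enlarged region; a slightly more careful bookkeeping near where $f$ itself is supported (of length $\le\pi$ after rescaling back, whence the $2\pi c\|f\|_\infty$ term) gives the constant $r_{k,w}$ as stated.

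The main obstacle, and the step requiring the most care, is the local region $|y-t|\lesssim 1$: this is exactly where the singular kernel $1/\tan$ meets the diagonal, and one cannot use the crude $L^1$-bound on $f$ there — the principal-value structure matters. The clean way is to exploit that $\U_L f(y) - \big(-\int_{|y-t|\le 1}\tfrac{f(y)-f(t)}{2\pi L\tan((y-t)/2L)}dt\big)$ is the "easy" tail, and for the local piece write $f(y)-f(t) = (f(y)-f(t))$ and bound directly by the $\Co^\alpha$-modulus, noting the oddness of $1/\tan$ is not even needed since we are taking absolute values and $|y-t|^{\alpha-1}$ is integrable for $\alpha>0$ — this is precisely why the hypothesis $w\in\Co^{3+\alpha}$ with $\alpha>0$ (rather than merely $\Co^3$) enters. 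Once the local integrability is handled this way the rest is routine bookkeeping of constants, and one should double-check that all three terms $\|w^{(k)}\|_{L^1(\R)}$, $\|w^{(k)}\|_\infty$, $\|w^{(k)}\|_{\Co^\alpha}$ genuinely appear with $N$-independent (i.e.\ $L$-independent) constants, which they do because the rescaling $t=s/L$ converts every $L$-dependence into the single overall factor $L^{k-1}\log(\pi L)$.
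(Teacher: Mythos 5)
Your proof of the identity \eqref{gL} is exactly the paper's: the change of variables $t=s/L$ in \eqref{U} together with Proposition~\ref{prop:U} applied $k$ times (the remark about integrating by parts with no boundary terms is superfluous once you invoke $(\U f)'=\U(f')$, but harmless). For the $L^1$ bound \eqref{gLest} you take a genuinely different route. The paper first establishes a \emph{pointwise} estimate on $\U_L f$ (its \eqref{Uest}), obtained by splitting according to the position of the evaluation point rather than the kernel variable: for $x\in[-\pi L,\pi L]\setminus[-\pi,\pi]$, i.e.\ away from $\operatorname{supp}(f)$, one simply has $\U_Lf(x)=\int_{-\pi/2}^{\pi/2}\tfrac{f(t)}{2\pi L\tan((x-t)/2L)}\,dt$ and the bound $\|f\|_{L^1(\R)}/|x|$, while for $x\in[-\pi,\pi]$ the tail of the kernel is integrated exactly (an explicit $\log|\sin|$ primitive, giving $c\|f\|_\infty$) and the local part is handled by the $\Co^\alpha$ modulus; integrating this pointwise bound over $[-\pi L,\pi L]$ then yields \eqref{gLest}. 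You instead cut the double integral at $|y-t|\le 1$ versus $|y-t|>1$ and estimate it directly, with the same two ingredients ($|\tan\theta|\gtrsim|\theta|$ on the tail, H\"older continuity plus the support restriction on the local part). Both arguments are correct and of comparable length; your constants come out slightly differently (the tail gives roughly $3\|w^{(k)}\|_{L^1(\R)}\log(2\pi L)$ rather than $2\|w^{(k)}\|_{L^1(\R)}\log(\pi L)$, but the excess is absorbed into the $\|w^{(k)}\|_\infty$ term via $\|w^{(k)}\|_{L^1(\R)}\le\pi\|w^{(k)}\|_\infty$, so the stated form of $r_{k,w}$ survives with different universal $c,c_\alpha$). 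Two small points: when you claim $|\tan(\tfrac{y-t}{2L})|^{-1}\lesssim L/|y-t|$, note that $\tfrac{|y-t|}{2L}$ can slightly exceed $\pi/2$ when one point is near $\pm\pi L$ and the other near $\mp\pi/2$ — the bound still holds there since $|\tan|\ge 1$, but your justification only covers $[0,\pi/2]$; and be aware that the paper's pointwise estimate \eqref{Uest} is not just an intermediate step for this proposition, it is re-used later (uniform boundedness of $\U_Lw$ in Corollary~\ref{cor:variance} and the bound on $\int\mathrm{M}_1(\U_L(w^{(3)}))$ in the proof of Theorem~\ref{clt:meso}), so proving only the $L^1$ statement would leave extra work downstream.
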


\begin{proof}
First of all, observe that by a change of variable, for any $x\in[-\pi, \pi]$, 
\[ \begin{aligned}
g_L(x) 
 & = \U w_L(x)= - \int_{-\pi L}^{\pi L} \frac{w(xL)-w(t)}{\tan\left(\frac{xL-t}{2L}\right)} \frac{dt}{2\pi L} \\
&= \U_L w(x L) . 
\end{aligned}\]
This establishes formula \eqref{gL} for $k=0$ -- the other cases follow in a similar way by observing that according to Proposition~\ref{prop:U}, the function $g_L \in \Co^3(\T)$ and $g_L^{(k)}= \U (w_L^{(k)}) $ for $k=1,2,3$. 
In order to obtain the estimate \eqref{gLest}, we use that for any $0<\alpha \le 1$, there exist universal constants $c,c_\alpha>0$ such that for any function $f\in\Co^\alpha(\R)$ with $\operatorname{supp}(f) \subseteq [-\frac \pi2, \frac \pi2]$, 
\begin{equation} \label{Uest}
\left| \U_L f (x)  \right| \le 
\begin{cases}
\displaystyle  \frac{\| f\|_{L^1(\R)}}{|x|} &\text{if } x\in [-\pi L, \pi L ] \setminus  [-\pi , \pi] \\
c_\alpha \| f\|_{\Co^\alpha} +  c\| f\|_\infty  &\text{if } x\in  [-\pi , \pi]
\end{cases} . 
\end{equation}

In order to obtain the first estimate,  observe that  if $x\in [-\pi L, \pi L] \setminus [-\pi, \pi]$, 
$ \displaystyle
\U_Lf(x) =  \int_{-\frac\pi2}^{\frac\pi2} \frac{f(t)}{2\pi L \tan(\frac{x-t}{2L})} dt
$
and then
\vspace*{-.3cm}
\[
\left| \U_Lf(x) \right| \le  \int_{-\frac\pi2}^{\frac\pi2} \frac{|f(t)|}{\pi |x-t|} dt \le 
\frac{\|f\|_{L^1(\R)}}{|x|} . 
\]
The second estimate in \eqref{Uest} follows from the fact if  $x\in [-\pi, \pi]$, we can decompose
\[
\U_Lf(x) = - \int_{-\pi }^{\pi } \frac{f(x)-f(t)}{2\pi L \tan(\frac{x-t}{2L})} dt 
+ f(x) \int_{\pi}^{\pi L}  \bigg( \frac{1}{  \tan(\frac{t-x}{2L})} - \frac{1}{\tan(\frac{t+x}{2L})}  \bigg)  \frac{dt}{2\pi L} . 
\]
On the one hand,  an explicit computation\footnote{Recall that $\frac{d}{dt} \log|\sin t| = \frac{1}{\tan t}$ for $t\in\T$, $t\neq 0$ and $\operatorname{supp}(f) \subseteq [-\frac \pi2, \frac \pi2]$.} gives for $x\in [-\frac\pi2, \frac \pi2]$,
\[ \begin{aligned}
\int_{\pi}^{\pi L}  \bigg( \frac{1}{  \tan(\frac{t-x}{2L})} - \frac{1}{\tan(\frac{t+x}{2L})}  \bigg)  \frac{dt}{2\pi L}
&=  \log\left|\frac{\sin(\frac{x+\pi}{2L})}{\sin(\frac{x-\pi}{2L})} \right|  \\
&= \log\left|\frac{x+\pi}{x-\pi} \right| + \O(L^{-2})
\end{aligned}\]
with a uniform error.  
On the other hand,
\[
\left|  \int_{-\pi }^{\pi } \frac{f(x)-f(t)}{2\pi L \tan(\frac{x-t}{2L})} dt  \right| \le
c_\alpha \| f\|_{\Co^\alpha}    
\qquad\text{where}\qquad
c_\alpha = \sup_{|x| \le \pi} \int_{-\pi }^{\pi } \frac{dt}{|x-t|^{1-\alpha}}  ,
\]
which shows that for any $x\in [-\pi, \pi]$, 
\[
\left| \U_Lf(x) \right| \le 
c_\alpha \|f\|_{\Co^\alpha} + c\|f\|_\infty . 
\]
Now, using formula \eqref{gL} and the estimate \eqref{Uest}, we obtain
\[\begin{aligned}
\| g_L^{(k)} \|_{L^1(\T)} 
& =   L^k \int_{-\pi}^{\pi} \left| \U_L(w^{(k)}) (xL) \right| dx 
= L^{k-1}  \int_{-\pi L}^{\pi L} \left| \U_L(w^{(k)}) (x) \right| dx 
\end{aligned}\]
which gives the bound  \eqref{gLest} by splitting the last integral in two parts.
\end{proof}

In order to identify the asymptotic variance in Theorem~\ref{clt:meso}, we also need the following easy consequence of Proposition~\ref{prop:U}.

\begin{corollary} \label{cor:variance}
According to the notations \eqref{var} and \eqref{sigma}, we have as $L\to+\infty$,
\[
\sigma^2(w_L)  \to \| w\|_{H^{1/2}(\R)}^2 . 
\]
\end{corollary}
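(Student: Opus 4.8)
The goal is to show $\sigma^2(w_L) \to \|w\|_{H^{1/2}(\R)}^2$ as $L\to+\infty$, i.e.\ that the discrete Sobolev seminorm on the torus of the rescaled function converges to the continuous $H^{1/2}$ seminorm of $w$ on $\R$. I would work from the two bilinear identities already recorded in the excerpt: the Parseval identity \eqref{sigmaU}, which gives $\sigma^2(w_L) = \int_\T g_L(x) w_L'(x)\,\tfrac{dx}{2\pi}$ with $g_L = \U w_L$, and the analogous statement on $\R$, namely $\|w\|_{H^{1/2}(\R)}^2 = \int_\R (\U_\infty w)(x) w'(x)\,\tfrac{dx}{2\pi}$, where $\U_\infty$ is the Hilbert transform on the line, $\widehat{\U_\infty w}(\xi) = -\i\,\sgn(\xi)\widehat{w}(\xi)$; the latter follows from \eqref{var} and Plancherel on $\R$ exactly as \eqref{sigmaU} follows from Parseval on $\T$.

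\textbf{Key steps.} First, using the change of variables $x \mapsto x/L$ in the torus integral and formula \eqref{gL} from Proposition~\ref{prop:gL}, I would rewrite
\[
\sigma^2(w_L) = \int_{-\pi}^{\pi} g_L(x) w_L'(x)\,\frac{dx}{2\pi} = \int_{-\pi L}^{\pi L} \big(\U_L w\big)(y)\, w'(y)\,\frac{dy}{2\pi},
\]
where I used $g_L(x) = \U_L w(xL)$ and $w_L'(x) = L\, w'(xL)$ and that $\operatorname{supp}(w)\subseteq[-\tfrac\pi2,\tfrac\pi2]$ so the integrand is supported in $[-\tfrac\pi2,\tfrac\pi2]$ for $L\ge1$; on that support $[-\pi L,\pi L]$ can be replaced by all of $\R$ harmlessly. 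Then the whole matter reduces to showing $\U_L w \to \U_\infty w$ as $L\to+\infty$ in a sense strong enough to pass the limit through the integral against $w'$. Since $w\in\Co^{3+\alpha}_c$, both $\U_L w$ and $\U_\infty w$ are bounded on $[-\tfrac\pi2,\tfrac\pi2]$ uniformly in $L$ — for $\U_L w$ this is the second line of estimate \eqref{Uest} — so by dominated convergence it suffices to establish \emph{pointwise} convergence $\U_L w(y) \to \U_\infty w(y)$ for each fixed $y$. That pointwise statement is elementary: writing both as principal-value integrals of $\tfrac{w(y)-w(t)}{\cdot}$, we have $\tfrac{1}{2\pi L \tan\!\big(\frac{y-t}{2L}\big)} \to \tfrac{1}{\pi(y-t)}$ pointwise in $t$ as $L\to+\infty$, with the $\Co^\alpha$-regularity of $w$ controlling the singularity at $t=y$ uniformly in $L$ and the compact support plus the bound $|\tan\theta|\ge c|\theta|$ for $|\theta|\le\pi/2$ controlling the tails; another dominated-convergence argument in the $t$-integral finishes it.

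\textbf{Alternative and main obstacle.} One could instead argue entirely on the Fourier side: $\sigma^2(w_L) = 2\sum_{k\ge1} k\,|\widehat{(w_L)}_k|^2$, and since $\widehat{(w_L)}_k = \tfrac1L \widehat{w}(k/L) + (\text{error from periodization})$, a Riemann-sum argument turns $2\sum_{k\ge1}\tfrac kL\cdot\tfrac1{L}|\widehat w(k/L)|^2$ into $2\int_0^\infty \xi|\widehat w(\xi)|^2 d\xi = \|w\|_{H^{1/2}(\R)}^2$. This is conceptually transparent but requires care with the periodization error and with the tail of the sum (the factor $k$ is unbounded), which is why I prefer the real-space route above. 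In either approach the only genuinely delicate point is the uniform control near the diagonal — ensuring the $C^\alpha$ modulus of $w$ bounds the kernels $\tfrac1{2\pi L\tan((y-t)/2L)}$ and $\tfrac1{\pi(y-t)}$ by a common integrable majorant independent of $L$ — but this is exactly the kind of estimate already packaged in \eqref{Uest}, so it should go through cleanly. I expect the proof to be short: reduce to pointwise convergence of $\U_L w$, invoke the uniform bound from Proposition~\ref{prop:gL}, apply dominated convergence twice.
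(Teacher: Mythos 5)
Your proposal is correct and follows essentially the same route as the paper: rewrite $\sigma^2(w_L)=\int_{-\pi L}^{\pi L}\U_L w(x)\,w'(x)\,\frac{dx}{2\pi}$ via \eqref{sigmaU} and the scaling identity of Proposition~\ref{prop:gL}, use the uniform bound \eqref{Uest} together with pointwise convergence $\U_L w\to \Hi w$ and dominated convergence, and identify the limit $\frac{1}{2\pi}\int_\R \Hi w\,w'\,dx$ with $\|w\|_{H^{1/2}(\R)}^2$. Your added detail on the pointwise convergence (and the Fourier-side alternative) goes beyond what the paper spells out but changes nothing essential.
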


\begin{proof}
By \eqref{UL}, it is immediate to verify that for any $x\in\R$,  $\U_L f (x) \to \Hi f (x)$ as $L\to+\infty$ where $\Hi$ denotes the Hilbert transform on $\R$. 
Now, by formula \eqref{sigmaU} and Proposition~\ref{prop:U},
\[
\sigma^2(w_L)  = \int_{-\pi}^{\pi} g_L(x)  w_L'(x)\frac{dx}{2\pi}
=  \int_{-\pi L}^{\pi L} \U_Lw(x)  w'(x)\frac{dx}{2\pi} . 
\]
 Since  $\operatorname{supp}(w) \subseteq [-\frac \pi2, \frac \pi2]$, by \eqref{Uest} the function $\U_Lw$ is uniformly bounded  and we conclude by the dominated convergence theorem that as $L\to+\infty$, 
\begin{equation} \label{norm}
\sigma^2(w_L)  = \int_{-\frac\pi2}^{\frac\pi2} \U_Lw(x)  w'(x)\frac{dx}{2\pi} 
\to \frac{1}{2\pi} \int_\R \Hi w(x) w'(x) dx . 
\end{equation}
It is well known that if $w\in \Co^1_c(\R)$, then the RHS of \eqref{norm} equals to $\|w\|_{H^{1/2}(\R)}^2$ which is also given by \eqref{var}. 
\end{proof}

Like in section~\ref{sect:globalclt},  our proof of Theorem~\ref{clt:meso}  relies on  Lemma~\ref{lem:Jo},  Proposition~\ref{prop:W} and the following proposition which provides a precise estimate for the term $R_0$ given by \eqref{W3}.

\begin{proposition} \label{prop:R}
Let $w$ be any function such that $g = \U w \in \Co^3(\T)$ and let $R_0(w)$ be given by \eqref{R0}. We have for any $0<\epsilon\le1$, 
\begin{equation} \label{R6}
R_0(w) \le \frac{\pi^3}{4} \left(  \epsilon^{-2} \|g\|_{L^1(\T)} + \epsilon^{-1}  \|g'\|_{L^1(\T)} + \frac{\epsilon}{3} \left(  \left\|  \mathrm{M}_\epsilon g''' \right\|_{L^1(\T)} +  \| g'\|_\infty  \right)   \right) ,
\end{equation}
where we denote  $ \displaystyle \mathrm{M}_\epsilon f(x)= \hspace{-.3cm} \sup_{|\zeta -x| \le \epsilon} \hspace{-.1cm}|f(\zeta)| $ for any $f\in \Co(\T)$. 
\end{proposition}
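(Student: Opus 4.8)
\textbf{Proof plan for Proposition~\ref{prop:R}.}
The plan is to analyze the integrand of $R_0(w)$ in \eqref{R0}, namely
\[
K(x_1,x_2) = \frac{g(x_1)-g(x_2) - (g'(x_1)+g'(x_2))\tan(\tfrac{x_1-x_2}{2})}{4\sin^2(\tfrac{x_1-x_2}{2})\tan(\tfrac{x_1-x_2}{2})},
\]
by splitting the domain $\T^2$ into the near-diagonal region $\{|x_1-x_2| \le \epsilon\}$ (taken mod $2\pi$) and the far region $\{|x_1-x_2| > \epsilon\}$. On the far region I would use the elementary bounds $|\sin(\tfrac{s}{2})| \gtrsim |s|$ and $|\tan(\tfrac{s}{2})| \gtrsim |s|$ for $|s|\le\pi$ (so the denominator is $\gtrsim |s|^3 \ge \epsilon^3$ up to the constant $\tfrac{1}{4}$ and a factor controlled by $\pi$), and then bound the numerator crudely by $|g(x_1)| + |g(x_2)| + (|g'(x_1)|+|g'(x_2)|)|\tan(\tfrac{x_1-x_2}{2})|$. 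Integrating term by term in the variable that is \emph{not} frozen gives contributions of the shape $\epsilon^{-2}\|g\|_{L^1(\T)}$ (from the $g$ terms, using $\int_{|s|>\epsilon} |s|^{-3}\,ds \lesssim \epsilon^{-2}$) and $\epsilon^{-1}\|g'\|_{L^1(\T)}$ (from the $g'$ terms, where one power of $|s|$ cancels against the $\tan$, leaving $\int_{|s|>\epsilon}|s|^{-2}\,ds \lesssim \epsilon^{-1}$); collecting the numerical constants into $\tfrac{\pi^3}{4}$ accounts for the first two terms on the right-hand side of \eqref{R6}.

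For the near-diagonal region the crude bound fails and I would instead exploit cancellation. Write $s = x_1 - x_2$ and expand: by Taylor's theorem applied to $g$ around the midpoint (or directly to $g(x_1)$ and $g'(x_1)$ around $x_2$), the numerator $g(x_1)-g(x_2) - (g'(x_1)+g'(x_2))\tan(\tfrac{s}{2})$ is $O(|s|^3)$ — this is exactly the second-order symmetric quadrature rule (trapezoidal-type) annihilating polynomials up to degree two, which is why the kernel in \eqref{R0} is the ``loop equation'' kernel after the explicit differentiation computing $R_0$. More precisely, the error of this quadrature is controlled by the third derivative of $g$ on the interval $[x_2,x_1]$, which is where the maximal function $\mathrm{M}_\epsilon g'''$ enters: on $\{|s|\le\epsilon\}$ one gets $|K(x_1,x_2)| \lesssim \mathrm{M}_\epsilon g'''(x_2) + (\text{boundary/remainder terms involving } g')$. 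Since $\sin^2(\tfrac{s}{2})\tan(\tfrac{s}{2}) \sim (\tfrac{s}{2})^3$ near $0$ and the numerator is $O(|s|^3)$ with the implied constant being $\mathrm{M}_\epsilon g'''$, the ratio is bounded uniformly, and integrating over $|s|\le\epsilon$ yields a factor $\epsilon$ times $\|\mathrm{M}_\epsilon g'''\|_{L^1(\T)}$ (integrating in $x_2$) plus the analogous $\epsilon \|g'\|_\infty$ term from the lower-order remainder; the constant $\tfrac{\epsilon}{3}$ and the $\tfrac{\pi^3}{4}$ prefactor come from carefully tracking $\int_{|s|\le\epsilon}\,ds = 2\epsilon$ against the $\tfrac13$ and the comparison constants between $s$ and $\sin(\tfrac s2),\tan(\tfrac s2)$ on $[-\pi,\pi]$.

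The main obstacle I anticipate is the near-diagonal Taylor estimate: one must organize the expansion of the numerator so that the remainder is genuinely expressed in terms of $g'''$ evaluated within distance $\epsilon$ of the diagonal (hence $\mathrm{M}_\epsilon g'''$, not $\|g'''\|_\infty$, which matters crucially in the mesoscopic application where $\|g_L'''\|_\infty$ is too large), while the lower-order pieces of the expansion either cancel by symmetry of the quadrature or get absorbed into the $\|g'\|_\infty$ term. Keeping the numerical constants sharp enough to land exactly on $\tfrac{\pi^3}{4}$ and $\tfrac{\epsilon}{3}$ is bookkeeping, but it requires the comparison inequalities $|s|/\pi \le |2\sin(\tfrac s2)| \le |s|$ and similar for $\tan$ on the relevant range, together with being careful that the domain is the torus so ``$|x_1-x_2|$'' always means the distance in $\T$. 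Once both regions are bounded, adding the two contributions and optimizing nothing further (the statement keeps $\epsilon$ free) gives \eqref{R6}.
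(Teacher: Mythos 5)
Your plan is correct and follows essentially the same route as the paper: the same decomposition into the far region $|x_1-x_2|\ge\epsilon$ (with the numerator split by the triangle inequality into a $g$-part giving $\epsilon^{-2}\|g\|_{L^1(\T)}$ and a $g'$-part where the $\tan$ cancels, giving $\epsilon^{-1}\|g'\|_{L^1(\T)}$, via $|\sin\vartheta|\ge\tfrac2\pi|\vartheta|$) and the near-diagonal region $|x_1-x_2|\le\epsilon$, handled by a third-order Taylor estimate whose remainder is controlled by $\mathrm{M}_\epsilon g'''$ plus a $\|g'\|_\infty$ term coming from $\tan(s/2)-s/2$. No gaps beyond the constant bookkeeping you already flag.
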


\begin{proof}
This is just a computation. Let us define 
\begin{align*} 
R_3(g)  =   \iint_{\T^2} \1_{|x_1-x_2| \ge \epsilon} \bigg| \frac{ g(x_1)-g(x_2)}{4 \sin^2(\frac{x_1-x_2}{2}) \tan(\frac{x_1-x_2}{2})}\bigg|\d x_1 \d x_2 ,
\qquad
R_4(g)   =  \iint_{\T^2} \1_{|x_1-x_2| \ge \epsilon} \bigg| \frac{ g'(x_1)+g'(x_2)}{4 \sin^2(\frac{x_1-x_2}{2})}\bigg|\d x_1 \d x_2 , 
\end{align*}
and 
\[
 R_5(g)   =  \iint_{\T^2} 
\1_{|x_1-x_2| \le \epsilon}
 \bigg|\frac{ g(x_1)-g(x_2) -  (g'(x_1) + g'(x_2)  \tan(\frac{x_1-x_2}{2})}{4 \sin^2(\frac{x_1-x_2}{2}) \tan(\frac{x_1-x_2}{2})}\bigg|\d x_1 \d x_2 .
\]
By \eqref{R0} and the triangle inequality, $R_0 \le R_3 + R_4+ R_5$, so it suffices to estimates each integral above individually.
Since $|\sin \vartheta | \ge \tfrac{2}{\pi} |\vartheta|$ for all $|\vartheta| \le \frac{\pi}{2}$, we have
\begin{equation} \label{R3}
 \begin{aligned}  
R_3(g)  & \le \frac{\pi^3}{4} \iint_{\T^2}  \1_{|x_1-x_2| \ge \epsilon} \bigg| \frac{ g(x_1)-g(x_2)}{(x_1-x_2)^3}\bigg| \d x_1\d x_2 \\
&\le \frac{\pi^3}{2} \int_\T |g(x_1)| \left( \int_{\T}    \1_{|x_1-x_2| \ge \epsilon}  \frac{\d x_2}{|x_2-x_1|^3} \right) \d x_1 \\
&\le\frac{\pi^3}{4 \epsilon^2} \| g\|_{L^1(\T)} . 
\end{aligned} 
\end{equation}
Similarly, we have
\begin{align} 
R_4(g) 
\label{R4}
&\le\frac{\pi^2}{2 \epsilon } \| g'\|_{L^1(\T)} . 
\end{align}
In order to estimate $R_5$, since we assume that $g \in \Co^{3}(\T)$, by Taylor theorem, this implies that for any $x_1,x_2 \in\T$ with $|x_1-x_2| \le \epsilon$, we have
\begin{equation*}
\left| g(x_1)-g(x_2) -  (g'(x_1) + g'(x_2)  \tan(\tfrac{x_1-x_2}{2})\right|
 \le \frac{|x_1-x_2|^3}{6} \bigg( \sup_{|\zeta-x_1| \le \epsilon} \hspace{-.1cm} |g'''(\zeta)|  + \| g'\|_{\infty} \bigg) . 
\end{equation*}
Since  $g \in \Co^{3}(\T)$, the function  $\mathrm{M}_\epsilon g'''$ is  also continuous function on $\T$ and the previous estimate shows that 
\begin{align} \notag
 R_5(g)  & \le   \frac{\pi^3}{24}  \iint_{\T^2} 
\1_{|x_1-x_2| \le \epsilon} \left(  \mathrm{M}_\epsilon g'''(x_1) + \| g'\|_\infty \right) \d x_1 \d x_2 \\
& \label{R5}
\le  \frac{\pi^3 \epsilon}{12} \left(  \left\|  \mathrm{M}_\epsilon g''' \right\|_{L^1(
\T)} +  \| g'\|_\infty  \right) . 
\end{align}
Collecting the estimates \eqref{R3}--\eqref{R5}, since $R_0 \le R_3 + R_4+ R_5$, we obtain \eqref{R6}.
 \end{proof}

We are now ready to give the proof of our main result.

\begin{proof}[Proof of Theorem~\ref{clt:meso}]
Let the sequence $L=L(N)$ be as in the statement of the Theorem.
Recall that we assume that $ w\in \Co_c^{3+\alpha}(\R)$ for some $\alpha>0$ with $\operatorname{supp}(w) \subseteq [-\frac \pi2, \frac \pi2]$. Since $\| w_L ' \|_{L^1(\T)}$ is fixed for any $N\in\N$, by  Lemma~\ref{lem:Jo} and Proposition~\ref{prop:W}, we have  for all $N\ge N_\beta$, 
\begin{equation} \label{le1}
\bigg| \log \E^\beta_N\left[ e^{ \int w_L d\widetilde{\mu}_N} \right] - \frac{\sigma^2(w_L)}{\beta}  \bigg| 
\le C_\beta \left(   R_0 \log N+ R_1 +  R_2  N^{-5} \right)\frac{\log N}{N} ,
\end{equation}
where $R_0, R_1$ and $R_2$ are as in \eqref{R0} and \eqref{R12}. 
Then by Corollary~\ref{cor:variance}, in order to obtain Theorem~\ref{clt:meso}, it suffices to show that the RHS of \eqref{le1} converges to 0 as $N\to+\infty$. 

\smallskip
\underline{Estimate for $R_0$.}
By Proposition~\ref{prop:R} with $\epsilon=1/L$, since $\| g'_L\|_\infty \le \| w''_L\|_\infty$, we have 
\begin{equation*}
R_0(w_L) \le \frac{\pi^3}{4} \left(  L^2 \|g_L\|_{L^1(\T)} + L \|g_L'\|_{L^1(\T)} + \frac{1}{3L} \left(  \left\|  \mathrm{M}_\epsilon g_L''' \right\|_{L^1(\T)} +  \| w''_L\|_\infty  \right)   \right) . 
\end{equation*}
Observe that by \eqref{gL} and a change of variable:
\[ \begin{aligned}
 \mathrm{M}_\epsilon(g_L''')(x) &= \sup_{|\zeta -x| \le \epsilon} \hspace{-.1cm}|g_L'''(\zeta)|  = L^3   \sup_{|\zeta -x| \le L^{-1}} \hspace{-.1cm}|  \U_L(w^{(3)})(L\zeta)|  \\
 & = L^3  \mathrm{M}_1( \U_L(w^{(3)})) (xL) , 
\end{aligned} \]
so that 
\[
 \left\|  \mathrm{M}_\epsilon g_L''' \right\|_{L^1(\T)}= L^2 \int _{-\pi L}^{\pi L}
 \mathrm{M}_1( \U_L(w^{(3)})) (x) dx .
\]
Thus, since $\|w''_L\|_\infty = L^2 \| w''\|_{\infty}$,  using the estimate~\eqref{gLest}, this shows that 
\begin{equation*}
R_0(w_L) \le \frac{\pi^3}{4} \log(\pi L)   \left(r_{0,w}  +  r_{1,w} + \|w''\|_{\infty}  \right)L
+ \frac{L}{3} \int _{-\pi L}^{\pi L}
 \sup_{|\zeta -x| \le 1} \hspace{-.1cm}| \U_L(w^{(3)})(\zeta)| dx .
\end{equation*}
Using the estimate \eqref{Uest}, we see that the integral in the previous formula is bounded by  $2r_{3 ,w} \log(\pi L)$ where $r_{3 ,w}$ is as in Proposition~\ref{prop:gL}. 
Therefore, we obtain
\begin{equation} \label{R8}
R_0(w_L) \le 8 \log(\pi L)  \left(r_{0,w}  +  r_{1,w} + r_{3,w} + \|w''\|_{\infty}  \right) L .
\end{equation}

\smallskip
\underline{Estimate for $R_1$.}
By Proposition~\ref{prop:U}, it is easy to check that if $w\in \Co^2(\T)$ and $g = \U w$,  by the Cauchy--Schwartz inequality, 
\[ \begin{aligned}
 \| (gw')'\|_{L^1(\T)}  &\le  \|w'g'\|_{L^1(\T)} + \| gw'' \|_{L^1(\T)}  \\
 & \le \|w'\|_{L^2(\T)}^2 + \| w\|_{L^2(\T)} \|w'' \|_{L^2(\T)}  . 
 \end{aligned}\]
Thus, we obtain
\begin{equation} \label{R1}
R_1(w_L) \le  \sqrt{2\pi}   \| g_L''\|_{L^1(\T)} +  \|w_L'\|_{L^2(\T)}^2 + \| w_L\|_{L^2(\T)} \|w_L'' \|_{L^2(\T)}  . 
\end{equation}
Since $\| w_L^{(k)}\|_{L^2(\T)} = L^{k-1/2}\|w^{(k)}\|_{L^2(\R)}$ for $k=0,1,2$, by \eqref{gLest}, this shows that
\begin{equation*}
R_1(w_L) \le  \left(  \sqrt{2\pi} r_{2 ,w}   \log(\pi L)+    \|w'\|_{L^2(\R)}^2 + \| w\|_{L^2(\T)} \|w'' \|_{L^2(\T)}   \right)L. 
\end{equation*}

\smallskip
\underline{Estimate for $R_2$.}
Similarly, by Proposition~\ref{prop:U}, we check that if $w\in \Co^2(\T)$ and $g = \U w$, 
\[
R_2(w) =  3\| g'\|_{\infty} + \| g\|_\infty \| w'\|_\infty \le 3\| w''\|_{\infty} +   \| w'\|_{\infty}^2 .
\]
Since we assume that $L\le N$, this shows that for some universal constant $C>0$,
\begin{equation} \label{R2}
R_2(w_L) \le  C \| w''\|_{\infty} N^2 . 
\end{equation}

\smallskip
\underline{Conclusion.}
Collecting the estimates \eqref{R8}--\eqref{R2}, using the inequality \eqref{le1}, we have shown that there exists a constant $C_w>0$ which only depends on the test function $w$ such that
\begin{equation} \label{le2}
\bigg| \log \E^\beta_N\left[ e^{ \int w_L d\widetilde{\mu}_N} \right] - \frac{\sigma^2(w_L)}{\beta}  \bigg| \le  C_\beta \left( C_w   (\log N)^2L+    C \|w''\|_\infty N^{-3}  \right) \frac{\log N}{N} . 
\end{equation}
Hence, in the regime where $N^{-1} L(N) (\log N)^{3} \to 0$ as $N\to+\infty$,  the RHS of \eqref{le2} converges to 0. Moreover, since  $\sigma^2(w_L)  \to \| w\|_{H^{1/2}(\R)}^2$  by Corollary~\ref{cor:variance}, this completes the proof.
\end{proof}

\section{GMC applications} \label{sect:chaos}

\subsection{Proof of Theorem~\ref{thm:gmc1}} \label{sect:gmc1}

Recall that we let $\phi_r(\vartheta) = \log|1- r e^{\i\vartheta}|^{-1}$ for any $0\le r<1$ and that  for any $\vartheta\in\T$,
\begin{equation} \label{logcharpoly}
\log|P_N(r_N e^{\i\vartheta})| 
 = -  \textstyle{ \sum_{j=1}^N } \phi_{r_N}(\theta_j-\vartheta)
\end{equation}
 is a smooth linear statistic for a  test function which depends on $N\in\N$. 
The proof of  Theorem~\ref{thm:gmc1} relies directly on \cite[Theorem 1.7]{LOS18}. 
Recall that $\G$ denotes the GFF on $\T$ and let $\mathrm{P}_r(\theta) = 1+  2\sum_{k=1}^{+\infty} r^k \cos(k\theta)$  be the Poisson kernel for $\T$.  $\G$  is a Gaussian log--correlated field on $\T$ whose covariance is given by \eqref{Gcov} and we have for any $0\le r <1$ and all $\vartheta\in\T$,
\[
\G(r e^{\i\vartheta}) =  \int_\T \mathrm{P}_r(\theta-\vartheta) \G(e^{\i\theta}) \frac{d\theta}{2\pi} . 
\] 
Let $\mu_N^\gamma$ be as in \eqref{gmc1}. 
In order to apply \cite[Theorem 1.7]{LOS18}, we need to establish the following asymptotics:  for any $\beta>0$ and any  $n\in\N$,
\begin{equation} \label{modG1}
\log \E_{N}^\beta\left[ \exp\left(  {\textstyle \sum_{\ell=1}^n} \gamma_\ell \int  \phi_{r_\ell}(\theta - \vartheta_\ell) \mu_N(d\theta) \right)  \right]
= \frac 1\beta \sum_{\ell, k = 1}^n \gamma_\ell\gamma_k
\E\left[ \G(r_ke^{\i\vartheta_k})\G(r_\ell e^{\i\vartheta_\ell}) \right]
+\underset{N\to+\infty}{o(1)} , 
\end{equation}
uniformly for all $\boldsymbol{\vartheta} \in \T^n$, $0<r_1, \dots , r_n \le r_N$  and  $\boldsymbol{\gamma}$ in compact subsets of $\R^n$. Then,  this implies that for any $|\gamma| \le \sqrt{2\beta}$ and  any function $f\in L^1(\T)$: 
\[
\int f(\theta) \mu_N^\gamma(d \theta) \to \int f(\theta) \mu_\G^{\breve\gamma}(d \theta)
\]
in distribution as $N\to+\infty$. 
From this result, one can infer that for any $|\gamma| \le \sqrt{2\beta}$, the random measure $ \mu_N^\gamma$ converges in law with respect to the topology of weak convergence to the GMC measure $ \mu_\G^{\breve\gamma}$, see e.g.~\cite[Sect.~6]{Berestycki17}.

\medskip

In order to obtain  the mod--Gaussian asymptotics \eqref{modG1} and to prove Theorem~\ref{thm:gmc1}, 
let us observe that the test functions $\phi_{r_\ell}(\cdot - \vartheta_\ell)$ behave for $0<r_\ell<r_N$ like smooth mesoscopic linear statistics and we can therefore adapt our proof of Theorem~\ref{clt:global}.
Indeed, let us observe that 
\begin{equation} \label{phi}
\phi_r  =  \sum_{k\ge 1} \frac{r^k}{k} \cos(k\cdot) , \qquad 
w=  {\textstyle \sum_{\ell=1}^n} \gamma_\ell  \phi_{r_\ell}(\cdot - \vartheta_\ell) , 
\end{equation}
 then according to formula \eqref{sigma}, we have
\begin{align}
\sigma^2\left(w\right) 
&\notag
= 2\sum_{\ell , \ell' =1}^n  \gamma_\ell \gamma_{\ell'} \Re\left\{  \sum_{k=1}^{+\infty}  k (\widehat{ \phi_{r_\ell}})_k  \overline{ (\widehat{ \phi_{r_{\ell'}}})_k } e^{\i k (\vartheta_{\ell'} - \vartheta_\ell)} \right\} \\
&\notag
=  \frac 12\sum_{\ell , \ell' =1}^n  \gamma_\ell \gamma_{\ell'}\Re\left\{  \sum_{k=1}^{\infty} \frac{r_\ell^k r_{\ell'}^k}{k} e^{\i k (\vartheta_{\ell'} - \vartheta_\ell)}\right\} \\
&\notag
 =   \frac 12 \sum_{\ell , \ell' =1}^n \log| 1- r_\ell r_{\ell'} e^{\i  (\vartheta_{\ell'} - \vartheta_\ell)}  |^{-1} \\
&\label{sigmaG}
 =  \sum_{\ell , \ell' =1}^n \E\left[ \G(r_ke^{\i\vartheta_k})\G(r_\ell e^{\i\vartheta_\ell}) \right] , 
\end{align}
where we used  \eqref{Gcov} for the last step.
In the remainder of this section,  we will use the method of \emph{loop equation} -- in particular Lemma~\ref{lem:Jo} and Proposition~\ref{prop:W} -- to obtain the asymptotics \eqref{modG1}. 
First, according to \eqref{UT}, we have that the Hilbert transforms of the functions  \eqref{phi} are given by
\begin{equation} \label{psi}
\psi_r = \U \phi_r  =   \sum_{k\ge 1} \frac{r^k}{k} \sin(k\cdot) , \qquad
g= \U w = {\textstyle \sum_{\ell=1}^n} \gamma_\ell  \psi_{r_\ell}(\cdot - \vartheta_\ell) . 
\end{equation} 
Then, in order to control the error terms in Proposition~\ref{prop:W}, we need the following Lemmas. 
The proofs of Lemma~\ref{lem:phi1} and Lemma~\ref{lem:phi2} follow from routine computations. For completeness, the details are provided in the appendix~\ref{app:lem}. 

\begin{lemma} \label{lem:phi1}
There exists a universal constant $C>1$ such that the following estimates hold for any $0\le r <1$, 
\begin{align}
\label{est1}  \|\psi_r\|_\infty , \| \psi_r'\|_{L^1(\T)}  & \le C ,  \\
\label{est2} \| \phi_r \|_\infty , \| \phi_r'\|_{L^1(\T)} &\le -2\log(1-r) + C ,  \\
 \label{est3} \|\psi_r'\|_\infty , \|\phi'_r\|_\infty &\le \frac{1}{1-r}  ,
\end{align}
and also 
\begin{equation} \label{est5}
\| \phi_r''\|_{L^1(\T)} ,  \| \psi_r''\|_{L^1(\T)}  \le   \frac{C}{1-r} . 
\end{equation}
\end{lemma}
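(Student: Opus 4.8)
The plan is to establish each bound in Lemma~\ref{lem:phi1} by working directly with the Fourier series
\[
\phi_r(\theta) = \sum_{k\ge1}\frac{r^k}{k}\cos(k\theta), \qquad
\psi_r(\theta) = \sum_{k\ge1}\frac{r^k}{k}\sin(k\theta),
\]
together with the closed forms $\phi_r(\theta) = -\frac12\log(1-2r\cos\theta+r^2)$ and $\psi_r(\theta) = \arctan\!\big(\frac{r\sin\theta}{1-r\cos\theta}\big)$, which follow from summing the geometric series for $-\log(1-re^{\i\theta})$. The key point is that $\psi_r$, being the imaginary part of the principal logarithm of $1-re^{\i\theta}$, takes values in $(-\pi/2,\pi/2)$, which immediately gives $\|\psi_r\|_\infty\le\pi/2$, uniformly in $r$; this handles the first half of \eqref{est1}. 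For $\phi_r$ the relevant feature is the logarithmic singularity at $\theta=0$ as $r\to1$: since $1-2r\cos\theta+r^2 \ge (1-r)^2$, we get $\phi_r(\theta)\le -\log(1-r)$, and a matching lower bound $\phi_r(\theta)\ge -\log 2$ comes from $1-2r\cos\theta+r^2\le (1+r)^2\le 4$, which yields \eqref{est2} for the sup norm.

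Next I would turn to the derivatives. Differentiating the closed forms (or the series) gives
\[
\phi_r'(\theta) = \frac{-r\sin\theta}{1-2r\cos\theta+r^2}, \qquad
\psi_r'(\theta) = \frac{r\cos\theta - r^2}{1-2r\cos\theta+r^2},
\]
so that $\phi_r'+\i\psi_r'$ is (up to sign) $\frac{re^{\i\theta}}{1-re^{\i\theta}}$; in particular both $|\phi_r'|$ and $|\psi_r'|$ are bounded by $\big|\frac{re^{\i\theta}}{1-re^{\i\theta}}\big| \le \frac{r}{|1-re^{\i\theta}|}\le \frac{r}{1-r}\le \frac1{1-r}$, which is \eqref{est3}. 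For the $L^1$ bounds on the first derivatives I would use that $\|\psi_r'\|_{L^1(\T)} = \int_\T|\psi_r'|\,d\theta$; here the slick route is the observation $|\psi_r'|\le \frac1{2}(\,|\text{something integrable}|\,)$ — more concretely, $\psi_r'$ changes sign at finitely many points (the zeros of $\cos\theta-r$), so $\int_\T|\psi_r'|$ equals a sum of at most two signed increments of $\psi_r$, hence is bounded by $4\|\psi_r\|_\infty\le 2\pi$, giving the second half of \eqref{est1}. The same monotonicity-interval argument applied to $\phi_r$: $\phi_r'$ vanishes only at $\theta=0,\pi$, so $\int_\T|\phi_r'| = 2(\phi_r(0)-\phi_r(\pi)) \le 2(-\log(1-r)+\log2)$, which is \eqref{est2} for the $L^1$ norm of $\phi_r'$.

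For the second-derivative $L^1$ bounds \eqref{est5} the cleanest approach is a direct pointwise estimate combined with integration. Differentiating once more, $\phi_r''$ and $\psi_r''$ are rational in $\cos\theta,\sin\theta$ with denominator $(1-2r\cos\theta+r^2)^2$; each is bounded pointwise by a constant times $\frac{r}{(1-2r\cos\theta+r^2)}$ (one can see this because $\phi_r''+\i\psi_r'' = \big(\frac{re^{\i\theta}}{1-re^{\i\theta}}\big)' = \frac{re^{\i\theta}}{(1-re^{\i\theta})^2}\cdot(\text{bounded})$, and $\big|\frac{re^{\i\theta}}{(1-re^{\i\theta})^2}\big|\le\frac{r}{|1-re^{\i\theta}|^2}$). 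Then
\[
\int_\T \frac{d\theta}{|1-re^{\i\theta}|^2} = \int_\T\frac{d\theta}{1-2r\cos\theta+r^2} = \frac{2\pi}{1-r^2}\le\frac{C}{1-r},
\]
a standard integral (e.g. via residues or the Poisson-kernel normalization $\frac1{2\pi}\int_\T \mathrm{P}_r = 1$ together with $\mathrm{P}_r(\theta) = \frac{1-r^2}{|1-re^{\i\theta}|^2}$). This yields \eqref{est5}. The main obstacle, such as it is, is purely bookkeeping: keeping track of the $r$-independent versus $\frac1{1-r}$-growing contributions and choosing a single universal $C$ that works simultaneously for all six estimates; there is no conceptual difficulty, which is why the paper defers the details to the appendix.
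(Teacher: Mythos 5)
Your proof is correct, and it shares the paper's overall strategy: explicit closed forms for $\phi_r,\psi_r$ and their derivatives, pointwise modulus bounds via $|1-re^{\i\theta}|\ge 1-r$, then integration. Where you genuinely diverge is in how the $L^1$ norms are controlled. The paper's appendix runs both \eqref{est1} and \eqref{est5} through the splitting estimate \eqref{est10}, $\int_0^\pi\frac{d\theta}{(1-r)^2+2r(1-\cos\theta)}\le\frac{\epsilon}{(1-r)^2}+\frac{\pi}{\epsilon}$ optimized at $\epsilon\asymp 1-r$, and handles $\|\phi_r'\|_{L^1(\T)}$ by an explicit change of variables. You instead note that $\psi_r'$ (resp.\ $\phi_r'$) changes sign at exactly two points of $\T$, so the $L^1$ norm is a total variation computed from at most two monotone arcs: $\|\psi_r'\|_{L^1(\T)}\le 4\|\psi_r\|_\infty\le 2\pi$ and $\|\phi_r'\|_{L^1(\T)}=2\big(\phi_r(0)-\phi_r(\pi)\big)=2\log\tfrac{1+r}{1-r}$; and for \eqref{est5} you use the exact Poisson--kernel identity $\int_\T|1-re^{\i\theta}|^{-2}\,d\theta=\frac{2\pi}{1-r^2}$ in place of \eqref{est10}. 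This buys shorter, essentially computation-free arguments with sharper constants (e.g.\ $\|\psi_r\|_\infty\le\pi/2$ from $\Re(1-re^{\i\theta})>0$, versus the paper's maximum-principle bound $\pi$), at the cost of being tied to the specific sign structure of these two functions, whereas the paper's integral bound is the kind of workhorse estimate it reuses in the same style for the third-derivative maximal functions in Lemma~\ref{lem:phi2}. One cosmetic correction: $\phi_r'+\i\psi_r'=\i\,\frac{re^{\i\theta}}{1-re^{\i\theta}}$, so the prefactor is $\i$ rather than a sign; this is harmless since you only ever use moduli, and likewise $\phi_r''+\i\psi_r''=-\frac{re^{\i\theta}}{(1-re^{\i\theta})^2}$ gives exactly the pointwise bound you need.
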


\begin{lemma} \label{lem:phi2}
Let $R_0$ be given by \eqref{R0}. There exists a universal constant $C>0$  such that for all $0 \le r<1$, 
\begin{equation} \label{R9}
R_0(\psi_r) , R_0(\phi_r) \le C  \frac{(\log(1-r))^2}{(1-r)\log \log  (1-r)^{-1}} . 
\end{equation}
\end{lemma}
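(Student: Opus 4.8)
The plan is to reduce the estimate on $R_0(\psi_r)$ and $R_0(\phi_r)$ to a uniform pointwise bound on the third derivatives near the singularity, then feed this into the general estimate of Proposition~\ref{prop:R}. Concretely, I would apply Proposition~\ref{prop:R} to each of $g=\psi_r$ and $g=\phi_r$ with a well-chosen cutoff parameter $\epsilon=\epsilon(r)$ tending to $0$ as $r\to1$. Proposition~\ref{prop:R} gives
\[
R_0(g) \le \frac{\pi^3}{4}\left( \epsilon^{-2}\|g\|_{L^1(\T)} + \epsilon^{-1}\|g'\|_{L^1(\T)} + \tfrac{\epsilon}{3}\big( \|\mathrm{M}_\epsilon g'''\|_{L^1(\T)} + \|g'\|_\infty \big)\right),
\]
so the task is to control each of the five quantities $\|g\|_{L^1}$, $\|g'\|_{L^1}$, $\|\mathrm{M}_\epsilon g'''\|_{L^1}$, $\|g'\|_\infty$ in terms of $r$. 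The first two and the fourth are already supplied by Lemma~\ref{lem:phi1}: we have $\|\psi_r\|_\infty \le C$ (hence $\|\psi_r\|_{L^1}\le 2\pi C$) and $\|\phi_r\|_{L^1}\le -2\log(1-r)+C$; similarly $\|\psi_r'\|_{L^1},\|\phi_r'\|_{L^1}$ and $\|\psi_r'\|_\infty,\|\phi_r'\|_\infty\le (1-r)^{-1}$ are controlled there. The only genuinely new ingredient is the $L^1$-norm of the maximal function $\mathrm{M}_\epsilon g'''$.

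The main work is therefore estimating $\|\mathrm{M}_\epsilon g'''\|_{L^1(\T)}$ for $g=\psi_r$ or $\phi_r$. For this I would use the explicit series representations $\phi_r = \sum_{k\ge1}\frac{r^k}{k}\cos(k\cdot)$ and $\psi_r=\sum_{k\ge1}\frac{r^k}{k}\sin(k\cdot)$, whose third derivatives are, up to sign, $\sum_{k\ge1} k^2 r^k \cos(k\cdot)$ or $\sin(k\cdot)$. These sums have closed forms: $\sum k^2 r^k e^{\i k\theta}$ is a rational function of $re^{\i\theta}$ with denominator $(1-re^{\i\theta})^3$, so one gets $|g'''(\theta)| \le C(1-r)^{-1}|1-re^{\i\theta}|^{-2}$ pointwise, and more precisely $|g'''(\theta)|\lesssim (1-r)^{-1}\big((1-r)^2+\theta^2\big)^{-1}$. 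The supremum over $|\zeta-\theta|\le\epsilon$ of this quantity is comparable to $(1-r)^{-1}\big((1-r)^2+\max(0,|\theta|-\epsilon)^2\big)^{-1}$, and integrating this over $\theta\in\T$ gives a bound of order $(1-r)^{-1}\cdot\frac{1}{1-r+\epsilon}\cdot\epsilon \;+\; (1-r)^{-1}\cdot\frac{1}{1-r}$ — roughly $\|\mathrm{M}_\epsilon g'''\|_{L^1}\lesssim \epsilon(1-r)^{-2} + (1-r)^{-2}\log(\text{something})$. The precise accounting shows $\|\mathrm{M}_\epsilon g'''\|_{L^1(\T)} \lesssim (1-r)^{-2}$ once $\epsilon \le 1-r$, and of order $\epsilon(1-r)^{-3}$ when $\epsilon \ge 1-r$; in either case substituting back, the $\tfrac{\epsilon}{3}\|\mathrm{M}_\epsilon g'''\|_{L^1}$ term contributes at most $\epsilon(1-r)^{-2}$ (or worse for larger $\epsilon$, which we will not choose).

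Finally I would optimize the choice of $\epsilon$. With the bounds above, $R_0(g) \lesssim \epsilon^{-2}(1-r)^{-1}\cdot(-\log(1-r)) + \epsilon^{-1}(1-r)^{-1} + \epsilon(1-r)^{-2}$ (the first term dominating because of the logarithmic factor coming from $\|\phi_r'\|_{L^1}$ and from the maximal-function estimate). Balancing the $\epsilon^{-2}$ and $\epsilon$ terms suggests $\epsilon^3 \sim (1-r)\cdot(-\log(1-r))$, but to land exactly on the claimed bound $\frac{(\log(1-r))^2}{(1-r)\log\log(1-r)^{-1}}$ one takes $\epsilon$ slightly larger — of order $\big(\tfrac{(1-r)\log\log(1-r)^{-1}}{-\log(1-r)}\big)^{?}$ — so that the $\epsilon^{-2}\|g\|_{L^1}$ term, carrying the extra $\log\log$ saving, becomes the bottleneck. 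The cleanest route is to pick $\epsilon$ so that $\epsilon^{-2}\cdot(-\log(1-r)) \asymp \epsilon\cdot(1-r)^{-1}\cdot(-\log(1-r))^{-1}\cdot(\log\log(1-r)^{-1})^{-1}$ and verify directly that all three terms are then $\le C\frac{(\log(1-r))^2}{(1-r)\log\log(1-r)^{-1}}$; this is a routine but slightly delicate bookkeeping step, and it is the one place where the exact form of the claimed bound (rather than a cruder power) is used. I expect this optimization/bookkeeping — making sure the $\log\log$ factor appears with the right sign and the maximal-function integral is estimated sharply enough — to be the main obstacle; everything else follows from Lemma~\ref{lem:phi1}, Proposition~\ref{prop:R}, and the explicit rational closed forms for the Poisson-type kernels.
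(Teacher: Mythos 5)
Your reduction to Proposition~\ref{prop:R} cannot reach the stated bound, and this is the genuine gap. In Proposition~\ref{prop:R} the off-diagonal region $|x_1-x_2|\ge\epsilon$ is handled crudely, by $|g(x_1)-g(x_2)|\le|g(x_1)|+|g(x_2)|$, which produces the term $\epsilon^{-2}\|g\|_{L^1(\T)}$. Your own analysis of the near-diagonal term forces $\epsilon\lesssim 1-r$ (otherwise $\epsilon\|\mathrm{M}_\epsilon g'''\|_{L^1}\approx\epsilon^2(1-r)^{-3}$ blows up), and then $\epsilon^{-2}\|g\|_{L^1}\gtrsim (1-r)^{-2}$ even in the best case $g=\psi_r$ with $\|\psi_r\|_{L^1}=O(1)$; if instead you let $\epsilon$ float, balancing $\epsilon^{-2}$ against $\epsilon^{2}(1-r)^{-3}$ gives at best order $(1-r)^{-3/2}$. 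Both are far above the target $\frac{(\log(1-r))^2}{(1-r)\log\log(1-r)^{-1}}$, and no choice of a single cutoff $\epsilon$ can manufacture the $\log\log$ in the denominator from the power-and-log ingredients you have: the ``routine bookkeeping'' step you defer is actually impossible. (As a side remark, the first term in your final display, $\epsilon^{-2}(1-r)^{-1}\log(1-r)^{-1}$, does not match any bound you derived --- $\|g\|_{L^1(\T)}$ is at most of order $\log(1-r)^{-1}$ with no factor $(1-r)^{-1}$ --- but correcting it does not help.)

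The missing idea, which is how the paper proves the lemma, is to keep the near-diagonal and $R_4$-type estimates from Proposition~\ref{prop:R} (with $g=\psi_r$, exactly as you do) but to re-estimate the off-diagonal integral $R_3$ in \eqref{R3} by a multi-scale argument that exploits the difference structure $g(x_1)-g(x_2)$ at \emph{every} scale, not just below $\epsilon$. One decomposes $\{|x_1-x_2|\ge\epsilon\}$ into annuli $\epsilon_k\le|x_1-x_2|\le\epsilon_{k+1}$ with $\epsilon_k=\epsilon(\log\epsilon^{-1})^k$, bounds $|\psi_r(x_1)-\psi_r(x_2)|\le|x_1-x_2|\,\mathrm{M}_{\epsilon_{k+1}}\psi_r'(x_1)$ on each annulus, and uses $\|\mathrm{M}_\delta\psi_r'\|_{L^1(\T)}\lesssim \delta(1-r)^{-1}+\log\delta^{-1}$; each annulus then costs $\lesssim\log\epsilon^{-1}\left(\tfrac1{1-r}+\tfrac1\epsilon\right)$, the number of scales is $\lesssim\frac{\log\epsilon^{-1}}{\log\log\epsilon^{-1}}$, and this is precisely where the factor $\frac{(\log\epsilon^{-1})^2}{\log\log\epsilon^{-1}}$ comes from. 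Taking $\epsilon=1-r$ and combining with the near-diagonal bound (your estimate of $\mathrm{M}_\epsilon\psi_r'''$ is fine for that part) gives \eqref{R9}; the same argument applies verbatim to $R_0(\psi_r)$. So your treatment of the third-derivative/maximal-function piece is sound, but without replacing the $\epsilon^{-2}\|g\|_{L^1}$ far-field bound by this multi-scale estimate the proof does not close.
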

 
 \medskip
 
We are now ready to give our Proof of Theorem~\ref{thm:gmc1}. 
We fix $n\in\N$, for any  $\boldsymbol{\gamma}\in \R^n$, $\boldsymbol{\vartheta} \in \T^n$, and $0<r_1, \dots , r_n \le r_N$, the function \eqref{phi} satisfies $w\in \Co^\infty(\T)$ and  by \eqref{est2}, we have
$\|w'\|_{L^1(\T)} \le  \eta = \| \boldsymbol{\gamma}\| (3 \log N +C)$ with $ \| \boldsymbol{\gamma}\| = \sum_{\ell=1}^n |\gamma_\ell|$. Hence, using the estimate \eqref{W6}, we obtain that there exists a constant $ C_{\beta,\boldsymbol{\gamma}}$ which depends only on $\|\boldsymbol{\gamma}\|$ and $\beta>0$ such that for all $N\ge N_\beta$ and all $t\in[0,1]$,
\begin{equation} \label{W7}
\E^\beta_{N, t w} \left[ | \widetilde{\W}_N | \right] 
 \le C_{\beta,\boldsymbol{\gamma}}  (\log N)^2 \left( R_2(w) N^{3-\eta^2/2\beta}  +R_0(w)  (\log N)^2 + R_1(w)  \right) , 
\end{equation}
where $R_0, R_1, R_2$ are given by \eqref{R12} and \eqref{R0} with $g = \U w=  {\textstyle \sum_{\ell=1}^n} \gamma_\ell  \psi_{r_\ell}(\cdot - \vartheta_\ell) $. 
In particular, we have $\| g\|_\infty \le  C \|\boldsymbol{\gamma}\| $ by \eqref{est1},  and using the estimate \eqref{est3}, we see that there exists a constant $C _{\boldsymbol{\gamma}}$ which only depends on $\|\boldsymbol{\gamma}\|$ and $n\in\N$ such that
\[\begin{aligned}
R_2(w)  & =  \| g'\|_{\infty} + \| g\|_\infty \| w'\|_\infty \\
&\le \|\boldsymbol{\gamma}\|  \max_{\ell=1,\dots, n}  \|  \psi_{r_\ell}' \|_\infty
+ C\|\boldsymbol{\gamma}\|^2  \max_{\ell=1,\dots, n}  \|  \phi_{r_\ell}' \|_\infty \\
&\le  \frac{C_{\boldsymbol{\gamma}}}{1-r_N} .
\end{aligned}\]
Similarly, 
$\| w'\|_\infty \le   \|\boldsymbol{\gamma}\| (1-r_N)^{-1} $ by \eqref{est3} and, with a possibly different constant  $C _{\boldsymbol{\gamma}}$, we deduce from \eqref{est1} and \eqref{est5} that
\[\begin{aligned}
R_1(w)  & =   \| g''\|_{L^1(\T)} + \| (gw')'\|_{L^1(\T)}  \\
&\le \|\boldsymbol{\gamma}\|   {\textstyle \sum_{\ell=1}^n} \left(   \| \psi_{r_\ell}''\|_{L^1(\T)} + \|g\|_{\infty} \| \phi_{r_\ell}''\|_{L^1(\T)} +  \|w'\|_{\infty} \| \psi_{r_\ell}'\|_{L^1(\T)}   \right) \\
& \le \frac{C_{\boldsymbol{\gamma}}}{1-r_N} . 
\end{aligned}\]
Since $\eta = \| \boldsymbol{\gamma}\| (3 \log N +C)$, this shows that the first term on the RHS of \eqref{W7} is negligible compared to the third term and we obtain for all $N\ge N_\beta$, 
\begin{equation} \label{W8}
\delta_N(w)  \le \frac{2C_{\beta,\boldsymbol{\gamma}} }{\beta N}  (\log N)^2 \left( R_0(w)  (\log N)^2 + \frac{2C_{\boldsymbol{\gamma}}}{1-r_N} \right) .
\end{equation}

Then, by \eqref{R0}, since 
$\displaystyle R_0(w) \le \| \boldsymbol{\gamma}\| \max_{\ell=1,\dots, n} R_0(\psi_{r_\ell})$, we deduce from Lemma~\ref{lem:phi2} that 
\[
R_0(w)  \le C \| \boldsymbol{\gamma}\| \frac{(\log(1-r_N))^2}{(1-r_N)\log \log  (1-r_N)^{-1}} .
\]
By \eqref{W8}, since $r_N = 1- \frac{(\log N)^6}{N}$, this implies that 
\begin{equation*}
\delta_N(w)  \le \frac{2C_{\beta,\boldsymbol{\gamma}} }{\beta}   \left(  \frac{C  \| \boldsymbol{\gamma}\|}{\log \log\left( N / (\log N)^6 \right)} + \frac{2C_{\boldsymbol{\gamma}}}{(\log N)^4}  \right) . 
\end{equation*}
According to Lemma~\ref{lem:Jo} since $(\widehat{\phi_r})_0 =0$ for any $r\in [0,1]$,  this proves that 
uniformly for all $\boldsymbol{\vartheta} \in \T^n$, $0<r_1, \dots , r_n \le r_N$ and $\boldsymbol{\gamma}\in \R^n$, we have  for all $N\ge N_\beta$, 
\begin{equation} \label{W9}
\left| \log \E^\beta_N\left[ e^{ \int w d\mu_N} \right] - \beta^{-1}\sigma^2(w) \right| \le \frac{2C_{\beta,\boldsymbol{\gamma}} }{\beta}   \left(  \frac{C  \| \boldsymbol{\gamma}\|}{\log \log\left( N / (\log N)^6 \right)} + \frac{2C_{\boldsymbol{\gamma}}}{(\log N)^4}  \right) .
\end{equation}
Since  $w=  {\textstyle \sum_{\ell=1}^n} \gamma_\ell  \phi_{r_\ell}(\cdot - \vartheta_\ell)$ and the RHS of \eqref{W9} converges to 0 as $N\to+\infty$,  by formula \eqref{sigmaG}, we obtain the asymptotics \eqref{modG1}. Whence, we deduce from \cite[Theorem 1.7]{LOS18} that  for any $|\gamma|< \sqrt{2\beta}$, the random measure $\mu_N^\gamma$ converges in law with respect to the topology of weak convergence to the GMC measure $ \mu_\G^{\breve\gamma}$,

\subsection{Proof of Theorem~\ref{thm:gmc2}} \label{sect:gmc2}

The proof of Theorem~\ref{thm:gmc2} is almost identical to that of Theorem~\ref{thm:gmc1} in the previous section, so we just go through the argument quickly. 
According to  \eqref{imcharpoly} and \eqref{psi}, we have for any $0\le r<1$ and all $\vartheta\in\T$,
\[
\Psi_{N,r}(\vartheta)=  \textstyle{ \sum_{j=1}^N }  \psi_{r}(\vartheta - \theta_j) .
\] 
 We claim that for any $\beta>0$ and any  $n\in\N$,
\begin{equation} \label{modG2}
\log \E_{N}^\beta\left[ \exp\left(  {\textstyle \sum_{\ell=1}^n} \gamma_\ell \Psi_{N, r_\ell}( \vartheta_\ell)  \right)  \right]
= \frac 1\beta \sum_{\ell, k = 1}^n \gamma_\ell\gamma_k
\E\left[ \G(r_ke^{\i\vartheta_k})\G(r_\ell e^{\i\vartheta_\ell}) \right]
+\underset{N\to+\infty}{o(1)} , 
\end{equation}
uniformly for all $\boldsymbol{\vartheta} \in \T^n$, $0<r_1, \dots , r_n \le r_N$  and  $\boldsymbol{\gamma}$ in compact subsets of $\R^n$.
Hence,  by applying \cite[Theorem~1.7]{LOS18}, we obtain for any $|\gamma| \le \sqrt{2\beta}$ the random measure $\widetilde{\mu}_N^\gamma$  given by \eqref{gmc2} converges in law with respect to the topology of weak convergence to the GMC measure $ \mu_\G^{\breve\gamma}$ associated with the GFF on $\T$. 
The proof of the asymptotics \eqref{modG2} is analogous to that of \eqref{modG1}.
Namely,  we have ${\textstyle \sum_{\ell=1}^n} \gamma_\ell \Psi_{N, r_\ell}( \vartheta_\ell) = \int g d\mu_N  $ where the function $g\in \Co^\infty(\T)$ is given by \eqref{psi}.
By \eqref{est1}, we have $\|g'\|_{L^1(\T)} \le C \| \boldsymbol{\gamma}\|$, so that by directly applying Lemma~\ref{lem:Jo}  and Proposition~\ref{prop:W},  we obtain for all $N\ge N_\beta$, 
\begin{equation} \label{est11}
\left| \log \E^\beta_N\left[ e^{ \int g d\mu_N} \right] - \beta^{-1}\sigma^2(g) \right| 
\le \delta_N(g) 
\le C_\beta \left(   R_0(g) \log N+ R_1(g) +  R_2(g)  N^{-5} \right)\frac{\log N}{N} . 
\end{equation}
Going through the estimates of section~\ref{sect:gmc1}, since the Hilbert transform of  $g$ is given by $\U g  = -w$, we have
\[
R_1(g), R_2(g) \le C_{\boldsymbol{\gamma}}  \frac{\log(1-r_N)^{-1}}{1-r_N} 
\qquad\text{and}\qquad
R_0(g)  \le C \| \boldsymbol{\gamma}\| \frac{(\log(1-r_N))^2}{(1-r_N)\log \log  (1-r_N)^{-1}} . 
\]
These estimates show that with $r_N = 1- \frac{(\log N)^4}{N}$, 
\[
\delta_N(g)  \le C_\beta \left(  \frac{ C \| \boldsymbol{\gamma}\|}{\log \log  (1-r_N)^{-1}} + \frac{2C_{\boldsymbol{\gamma}}}{(\log N)^2} \right) , 
\]
so that the LHS of \eqref{est11} converges to 0 as $N\to+\infty$. 
By definition of the Hilbert transform, $\sigma^2(g) = \sigma^2(w)$  is given by \eqref{sigmaG}. Hence, 
since ${\textstyle \sum_{\ell=1}^n} \gamma_\ell \Psi_{N, r_\ell}( \vartheta_\ell) = \int g d\mu_N$, we obtain the asymptotics \eqref{modG2} and this completes the proof of Theorem~\ref{thm:gmc2}.

\subsection{Thick points: Proofs of Proposition~\ref{prop:TP1} and Proposition~\ref{prop:TP2}} \label{sect:TP}

The goal of this section is to deduce from Theorem~\ref{thm:gmc1} some important properties of the \emph{thick points} of the characteristic polynomial of the \Cbeta. 
Recall that for any $\gamma>0$, the set of $\gamma$--thick points of the  characteristic polynomial  is 
\[
\mathscr{T}^\gamma_N = \left\{ \theta \in \T :  | P_N(e^{\i\theta})|   \ge N^{\gamma/\beta} \right\} .
\]
The connection between Theorem~\ref{thm:gmc1} and Proposition~\ref{prop:TP1} comes from the fact that the random measure $\mu_N^\gamma$ is essentially supported on $\mathscr{T}^\gamma_N $ for large $N\in\N$, see e.g. \cite[section~3]{CFLW}. 
In the following, we relie on this heuristic to obtain a lower for the Lebesgue measure $|\mathscr{T}^\gamma_N|$ when  $\gamma$ is less than the critical value $\gamma_* = \sqrt{2\beta}$. 
Then, we obtain the complementary upper--bound by using a result of Su \cite[Theorem 1.2]{Su09} -- see Lemma~\ref{lem:UB} below.
By combining these estimates, the proof of  Proposition~\ref{prop:TP1} will be given at the end of this section after we obtained the necessary lemmas. 
Since the proof of Proposition~\ref{prop:TP2} is almost identical to that of  Proposition~\ref{prop:TP1}, we skip it and only comment on the main differences in Remark~\ref{rk:imagpart} below.

\medskip

We let for any $N\in\N$,  $0<r<1$ and $\theta\in\T$, 
\begin{equation} \label{Upsilon}
\Upsilon_{N,r}(\theta) = \log|P_N(r e^{\i \theta})|
\end{equation}
and $\displaystyle \Upsilon_{N}(\theta) = \lim_{r\to1} \Upsilon_{N,r}(\theta) = \log|P_N(e^{\i \theta})|$.
Recall that $r_N = 1- \frac{(\log N)^6}{N}$. 
Observe that it follows immediately from the asymptotics \eqref{modG1} and formula  \eqref{Gcov} that there exists a constant $R_\beta >1 $ such that for all $|\gamma| \le 2 \gamma_*$ and $\theta \in \T$,
\begin{equation}\label{Rest}
 R_\beta^{-1}  (1-r_N)^{-\gamma^2/2\beta} \le \E^\beta_N \left[ e^{\gamma \Upsilon_{N,r_N}(\theta)} \right] \le R_\beta   (1-r_N)^{-\gamma^2/2\beta} .
\end{equation}

The following result follows essentially from \cite[Proposition 3.8]{CFLW}. Since our context is slightly different, we provide the main steps of the proof for completeness.

\begin{lemma} \label{lem:TP}
Let $\epsilon_N =\frac{(\log N)^6}{N}$ and $r_N= 1-\epsilon_N$.  Fix $\gamma >0$ and define
\[
\widetilde{\mathscr{T}}_N^{\gamma}  = \left\{ \theta \in \T :  \Upsilon_{N,r_N}(\theta) \ge \frac{\gamma}{\beta} \log N  \right\} . 
\]
  For any $\delta>0$ such that $\gamma+\delta< \gamma_*$, we have for any $C>0$ as $N\to+\infty$,
\[
\P_{N}^\beta\left[  |\widetilde{\mathscr{T}}_N^{\gamma} |  \le C N^{-(\gamma+\delta)^2/2\beta} \right] \to0.
\]
\end{lemma}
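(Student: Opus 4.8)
The plan is to prove the lower bound on $|\widetilde{\mathscr{T}}_N^\gamma|$ via a standard second moment (Paley--Zygmund type) argument applied to the total mass of the subcritical chaos measure $\mu_N^\gamma$, which is essentially concentrated on $\widetilde{\mathscr{T}}_N^\gamma$. Concretely, introduce the random variable $Y_N = \mu_N^\gamma(\T) = \int_\T \frac{e^{\gamma\Upsilon_{N,r_N}(\theta)}}{\E_N^\beta[e^{\gamma\Upsilon_{N,r_N}(\theta)}]}\,\frac{d\theta}{2\pi}$. By Fubini $\E_N^\beta[Y_N] = 1$, and by Theorem~\ref{thm:gmc1} (more precisely the mod-Gaussian asymptotics \eqref{modG1} applied with $n=2$, $\gamma_1=\gamma_2=\gamma$), one controls the second moment $\E_N^\beta[Y_N^2]$: it converges to $\int_{\T^2} \bigl|1 - e^{\i(\vartheta_1-\vartheta_2)}\bigr|^{-\breve\gamma^2/2}\,\frac{d\vartheta_1 d\vartheta_2}{(2\pi)^2}$ with $\breve\gamma = \gamma\sqrt{2/\beta}$, which is finite precisely because $\breve\gamma^2/2 < 2$, i.e. $\gamma < \gamma_* = \sqrt{2\beta}$. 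Thus $Y_N$ is bounded in $L^2$ and by Paley--Zygmund there is $c>0$ with $\P_N^\beta[Y_N \ge c] \ge c$ for $N$ large; in fact one can upgrade this to convergence in probability $Y_N \to \mu_\G^{\breve\gamma}(\T) > 0$ a.s., which gives $\P_N^\beta[Y_N \ge \delta_0] \to 1$ for some small $\delta_0>0$.

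Next I would split the mass of $\mu_N^\gamma$ according to whether $\theta$ lies in $\widetilde{\mathscr{T}}_N^\gamma$ or not. On the complement, $\Upsilon_{N,r_N}(\theta) < \frac\gamma\beta\log N$, so using the lower bound in \eqref{Rest}, $\frac{e^{\gamma\Upsilon_{N,r_N}(\theta)}}{\E_N^\beta[e^{\gamma\Upsilon_{N,r_N}(\theta)}]} \le R_\beta\, (1-r_N)^{\gamma^2/2\beta} N^{\gamma^2/\beta}$. Since $1-r_N = (\log N)^6/N$, this bound equals $R_\beta (\log N)^{3\gamma^2/\beta} N^{-\gamma^2/2\beta} N^{\gamma^2/\beta} = R_\beta(\log N)^{3\gamma^2/\beta} N^{\gamma^2/2\beta}$. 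Hmm — that grows; so the naive pointwise bound over the whole complement is too lossy. The fix is the usual one: use a \emph{truncated} version of the mass or, cleaner, bound the contribution of the complement in expectation. By Fubini and \eqref{Rest}, $\E_N^\beta\bigl[\int_{(\widetilde{\mathscr{T}}_N^\gamma)^c} \frac{e^{\gamma\Upsilon_{N,r_N}}}{\E_N^\beta[e^{\gamma\Upsilon_{N,r_N}}]}\frac{d\theta}{2\pi}\bigr]$ — but this is at most $1$, which is not small either. The genuinely correct route (as in \cite[Prop.~3.8]{CFLW}) is to estimate the contribution of the complement using a slightly larger exponent: for any $\gamma' \in (\gamma,\gamma_*)$, on $(\widetilde{\mathscr{T}}_N^\gamma)^c$ we have $e^{\gamma\Upsilon} = e^{(\gamma-\gamma')\Upsilon} e^{\gamma'\Upsilon} \le$ (on the part where $\Upsilon \ge 0$) $e^{\gamma'\Upsilon}$ times something, but more efficiently: restrict to the ``bulk'' where $\Upsilon_{N,r_N}(\theta) \le \frac{\gamma}{\beta}\log N$ and note that the chaos mass there is governed by the measure $\mu_N^{\gamma'}$ for a sub-threshold $\gamma'$ — one shows $\E_N^\beta\bigl[\mu_N^\gamma((\widetilde{\mathscr{T}}_N^\gamma)^c)\bigr] \to 0$ by a union bound over a dyadic partition of the range of $\Upsilon$ combined with the exponential moment bounds \eqref{Rest} and the fact that $\gamma<\gamma_*$ makes the relevant geometric-type sum converge. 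This is the step I expect to be the main obstacle: getting the ``leakage'' of $\mu_N^\gamma$ outside $\widetilde{\mathscr{T}}_N^\gamma$ to be negligible requires a careful multi-scale / dyadic decomposition argument rather than a one-line pointwise bound, and it is exactly the content imported from \cite{CFLW}.

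Granting that leakage estimate, the conclusion is immediate: with probability tending to $1$ we have $Y_N \ge \delta_0$ while the complement contributes at most $\delta_0/2$ (by Markov applied to the leakage expectation), so $\int_{\widetilde{\mathscr{T}}_N^\gamma} \frac{e^{\gamma\Upsilon_{N,r_N}(\theta)}}{\E_N^\beta[e^{\gamma\Upsilon_{N,r_N}(\theta)}]}\frac{d\theta}{2\pi} \ge \delta_0/2$. On $\widetilde{\mathscr{T}}_N^\gamma$ we have $\Upsilon_{N,r_N}(\theta) \ge \frac\gamma\beta\log N$, so we cannot upper bound the integrand there cheaply; instead we insert a further (high-probability) upper barrier. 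Precisely, let $\mathscr{A}_N = \{\max_\T \Upsilon_{N,r_N} \le \frac{\gamma+\delta/2}{\beta}\log N\}$ — wait, that would force $\widetilde{\mathscr{T}}_N^\gamma$ to be in a narrow band, which is fine: on $\widetilde{\mathscr{T}}_N^\gamma \cap \mathscr{A}_N$ the integrand is at most $R_\beta(1-r_N)^{\gamma^2/2\beta} e^{\gamma(\gamma+\delta/2)\log N/\beta}$, which up to $(\log N)^{O(1)}$ factors is $N^{-\gamma^2/2\beta + \gamma(\gamma+\delta/2)/\beta} = N^{\gamma^2/2\beta + \gamma\delta/2\beta} \le N^{(\gamma+\delta)^2/2\beta}$ for $N$ large (absorbing the polylog). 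Since $\gamma+\delta<\gamma_*$, the event $\mathscr{A}_N$ has probability $\to 1$ by \eqref{max1} (equivalently by the moment bound \eqref{Rest} and a Markov/union bound over an $N$-net in $\theta$, using continuity of $\Upsilon_{N,r_N}$). Therefore on an event of probability $\to 1$, $\delta_0/2 \le |\widetilde{\mathscr{T}}_N^\gamma| \cdot N^{(\gamma+\delta)^2/2\beta}$, i.e. $|\widetilde{\mathscr{T}}_N^\gamma| \ge \frac{\delta_0}{2} N^{-(\gamma+\delta)^2/2\beta}$, which for $C = \delta_0/2$ (and then any fixed $C$, after shrinking) shows $\P_N^\beta[|\widetilde{\mathscr{T}}_N^\gamma| \le C N^{-(\gamma+\delta)^2/2\beta}] \to 0$, as claimed. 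I would remark that all the inputs — the $L^2$ bound and convergence $Y_N \to \mu_\G^{\breve\gamma}(\T)$, the pointwise exponential moment bounds \eqref{Rest}, and the upper barrier from \eqref{max1} — are already available from Theorem~\ref{thm:gmc1} and the preceding discussion, so the only real work is the leakage estimate, for which I would cite and adapt \cite[Proposition~3.8]{CFLW}.
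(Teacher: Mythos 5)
Your overall strategy (chaos mass concentrated on the thick points, plus a pointwise bound on the density) is in the spirit of the paper's proof, but two of your key steps are not just incomplete, they are false as stated. First, the leakage estimate: you work with the measure $\mu_N^{\gamma}$, whose chaos parameter equals the threshold level, and claim $\E_N^\beta\big[\mu_N^{\gamma}\big((\widetilde{\mathscr{T}}_N^{\gamma})^c\big)\big]\to 0$. By Fubini this expectation is $\int_\T \E_N^\beta\big[\1_{\Upsilon_{N,r_N}<\frac{\gamma}{\beta}\log N}\,e^{\gamma\Upsilon_{N,r_N}}\big]\big/\E_N^\beta\big[e^{\gamma\Upsilon_{N,r_N}}\big]\,\frac{d\theta}{2\pi}$, i.e.\ the probability, under the $\gamma$--tilted law, that $\Upsilon_{N,r_N}(\theta)$ falls below $\frac{\gamma}{\beta}\log N$; since the tilt shifts the (approximately Gaussian, variance $\sim\frac1\beta\log N$) field to have mean exactly $\frac{\gamma}{\beta}\log N$, this probability is of order $\tfrac12$, not $o(1)$. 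No dyadic decomposition can rescue a false statement: the fix is to tilt strictly above the threshold, which is exactly why the paper works with $\mu_N^{\gamma+\delta}$ and imports from \cite[Lemma~3.2]{CFLW} the two--sided bound $\E_N^\beta\big[\mu_N^{\gamma+\delta}(\T\setminus\widetilde{\mathscr{T}}_N^{\gamma})\big],\,\E_N^\beta\big[\mu_N^{\gamma+\delta}(\widetilde{\mathscr{T}}_N^{\gamma+2\delta})\big]\le R_\beta\,\epsilon_N^{\delta^2/2\beta}$ (a one--line Chernoff/tilting computation with \eqref{Rest}, not a multiscale argument). Second, your upper barrier: the event $\mathscr{A}_N=\{\max_\T\Upsilon_{N,r_N}\le\frac{\gamma+\delta/2}{\beta}\log N\}$ has probability tending to $0$, not $1$, because the maximum is of order $\frac{\gamma_*}{\beta}\log N$ and $\gamma+\delta/2<\gamma_*$; \eqref{max1} gives the opposite of what you assert, and invoking \eqref{max1} here would in any case be circular, since Proposition~\ref{prop:TP1} is proved downstream of this lemma. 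The non-circular bound available (Lemma~\ref{lem:UB}) only caps the maximum near $\frac{\gamma_*}{\beta}\log N$, which is too weak: on points with $\Upsilon_{N,r_N}\approx\frac{\gamma_*}{\beta}\log N$ the normalized density is of order $N^{\gamma\gamma_*/\beta-\gamma^2/2\beta}\gg N^{(\gamma+\delta)^2/2\beta}$, so your final inequality does not follow.

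The paper's way around both problems is to confine the pointwise density bound to the band $\widetilde{\mathscr{T}}_N^{\gamma}\setminus\widetilde{\mathscr{T}}_N^{\gamma+2\delta}$, where the density of $\mu_N^{\gamma+\delta}$ is deterministically at most $R_\beta(\log N)^{6\gamma}N^{(\gamma+2\delta)^2/2\beta-\delta^2/2\beta}$, and then to show that the $(\gamma+\delta)$--tilted mass outside that band (both below level $\gamma$ and above level $\gamma+2\delta$) is $O(\epsilon_N^{\delta^2/2\beta})$ in expectation; combined with $\mu_N^{\gamma+\delta}(\T)\Rightarrow\mu_\G^{\widetilde{\gamma+\delta}}(\T)>0$ (Theorem~\ref{thm:gmc1}), Markov's inequality yields the claim. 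Two smaller points: your $L^2$/Paley--Zygmund remark is also off --- the limiting second moment $\int_{\T^2}|1-e^{\i(\vartheta_1-\vartheta_2)}|^{-\breve\gamma^2/2}\,d\vartheta_1 d\vartheta_2$ is finite only for $\breve\gamma^2/2<1$, i.e.\ in the $L^2$ phase $\gamma<\sqrt{\beta}$, not up to $\gamma_*$ --- and the convergence of the total mass is in distribution, not in probability; both are harmless since positivity of the limit via Theorem~\ref{thm:gmc1} is all you need, but the two errors flagged above are genuine gaps that your argument cannot close without adopting the $\delta$--shifted tilting and the band truncation.
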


\begin{proof}
Let us fix small $\epsilon, \delta>0$. 
Observe that by definition of the random measure $\mu_N^\gamma$, \eqref{gmc1}, by using the estimate \eqref{Rest}, we obtain
\[
\mu_N^{\gamma+\delta}\left(\widetilde{\mathscr{T}}_N^{\gamma}  \setminus\widetilde{\mathscr{T}}_N^{\gamma+2\delta}\right) \le R_\beta \epsilon_N^{(\gamma+\delta)^2/2\beta} N^{(\gamma+\delta)(\gamma +2\delta)/\beta}
|\widetilde{\mathscr{T}}_N^{\gamma} |
 = R_\beta (\log N)^{6\gamma} N^{(\gamma+2\delta)^2/2\beta  - \delta^2/2\beta} |\widetilde{\mathscr{T}}_N^{\gamma} | . 
\]  
This shows that if $N$ is sufficiently large,
\[\begin{aligned}
\P^\beta_N\left[  |\widetilde{\mathscr{T}}_N^{\gamma} |  \le C N^{-(\gamma+2\delta)^2/2\beta } \right] 
&\le  \P^\beta_N\left[ \mu_N^{\gamma+\delta}\left(\widetilde{\mathscr{T}}_N^{\gamma}  \setminus\widetilde{\mathscr{T}}_N^{\gamma+2\delta}\right) \le C  R_\beta (\log N)^{6\gamma} N^{ - \delta^2/2\beta} \right] \\
&\le  \P^\beta_N\left[  \mu_N^{\gamma+\delta}\left(\T\right) \le 3\epsilon\right]  + 
\P^\beta_N\left[ \mu_N^{\gamma+\delta}\left(\T\setminus \widetilde{\mathscr{T}}_N^{\gamma} \right)\ge \epsilon\right]
+\P^\beta_N \left[\mu_N^{\gamma+\delta}\left(\widetilde{\mathscr{T}}_N^{\gamma+2\delta}\right) \ge \epsilon \right] . 
\end{aligned}\] 
Moreover, by  \cite[Lemma 3.2]{CFLW}, we also have the estimates: 
\[
\E^\beta_N \left[\mu_N^{\gamma+\delta}\left(\widetilde{\mathscr{T}}_N^{\gamma+2\delta}\right)  \right] , 
\E^\beta_N \left[\mu_N^{\gamma+\delta}\left(\T\setminus \widetilde{\mathscr{T}}_N^{\gamma} \right) \right]
\le R_\beta \epsilon_N^{\delta^2/2\beta}  . 
\]
Since,  by Theorem~\ref{thm:gmc1}, the random variable $\mu_N^{\gamma+\delta}(\T)$ converges in law to $\mu_{\G}^{\widetilde{\gamma+\delta}}(\T)$, this implies that
\[
\limsup_{N\to+\infty}\P^\beta_N\left[  |\widetilde{\mathscr{T}}_N^{\gamma} |  \le N^{-(\gamma+2\delta)^2/2\beta} \right] 
\le  \P^\beta_N\left[  \mu_{\G}^{\widetilde{\gamma+\delta}} \left(\T\right) \le 3\epsilon\right] . 
\]
Since this estimate holds for arbitrary small $\epsilon>0$ and the random variable $\mu_{\G}^{\widetilde{\gamma+\delta}}(\T)>0$ almost surely\footnote{This fact follows e.g. from the construction of the random measure $\mu_{\G}^\gamma$ in \cite{Berestycki17} -- see also \cite[Proposition 2.1]{CFLW} for further details.} for any $\gamma < \gamma_* -\delta$, this completes the proof (we may also replace $2\delta$ by $\delta$ since $\delta>0$ is arbitrary small.)
\end{proof}

\begin{lemma}[Upper--bounds] \label{lem:UB}
For any $\gamma >0$ and any small $\delta>0$, we have
\begin{equation} \label{UB2}
\P_{N}^\beta\left[  |\mathscr{T}_N^{\gamma} |  \ge C N^{-\gamma^2/2\beta+ \delta} \right] \to0.
\end{equation}
Moreover, we have for any  small $\delta>0$, 
\begin{equation} \label{UB0}
\lim_{N\to+\infty}\P^\beta_N\left[ \max_{\theta\in\T} \log|P_N(e^{\i\theta})| \le (
1 +\delta)  \sqrt{\tfrac{2}{\beta}}\log N\right] = 1.
\end{equation}
\end{lemma}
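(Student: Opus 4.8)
The key external input is the following tail estimate for the imaginary part and for the logarithm of the characteristic polynomial coming from \cite[Theorem~1.2]{Su09}: there exists a constant $a_\beta>0$ such that for any fixed $\theta\in\T$ and any $t>0$,
\begin{equation} \label{Su}
\P^\beta_N\left[ \log|P_N(e^{\i\theta})| \ge t \right] \le a_\beta\, e^{-\frac{\beta}{2}\frac{t^2}{\log N}\left(1+o(1)\right)} ,
\end{equation}
valid uniformly in $\theta$ once $t$ stays in a suitable range (say $t\le K\log N$ for any fixed $K$). To use this pointwise bound to control the \emph{Lebesgue measure} of the superlevel set, the plan is to write $|\mathscr{T}^\gamma_N|=\int_\T \1_{\{\log|P_N(e^{\i\theta})|\ge \gamma\log N/\beta\}}\,d\theta$ and take expectations; by Fubini and \eqref{Su} this immediately gives $\E^\beta_N|\mathscr{T}^\gamma_N|\le 2\pi\, a_\beta\, N^{-\gamma^2/2\beta+o(1)}$. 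Then Markov's inequality yields \eqref{UB2}: indeed for $\delta>0$,
\[
\P^\beta_N\!\left[|\mathscr{T}^\gamma_N|\ge N^{-\gamma^2/2\beta+\delta}\right]\le N^{\gamma^2/2\beta-\delta}\,\E^\beta_N|\mathscr{T}^\gamma_N|\le 2\pi a_\beta\, N^{-\delta+o(1)}\to 0 .
\]
(The constant $C$ in the statement is harmless and can be absorbed.)

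For the maximum bound \eqref{UB0} one cannot simply union-bound over $\T$ since it is uncountable, so the plan is a standard chaining/discretization argument. First discretize $\T$ into $M=N^{p}$ equally spaced points $\theta_1,\dots,\theta_M$ for a large fixed exponent $p$ (say $p=10$ suffices). Apply \eqref{Su} at each $\theta_i$ with threshold $t=(1+\delta/2)\sqrt{2/\beta}\log N$ and a union bound: the resulting probability is at most $M\cdot a_\beta N^{-(1+\delta/2)^2}=a_\beta N^{p-(1+\delta/2)^2}$, which is \emph{not} small unless we also control the oscillation of $\log|P_N|$ between grid points. To bridge that gap, use the crude deterministic Lipschitz-type bound: since $P_N$ is a polynomial of degree $N$, one has $|\log|P_N(e^{\i\theta})|-\log|P_N(e^{\i\theta'})||$ controlled by $N|\theta-\theta'|$ away from the zeros, but near a zero $\log|P_N|\to-\infty$, so this direction is fine — overshoot can only happen near a point where $|P_N|$ is large, and there the function is smooth with derivative bounded by $CN$. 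Concretely, between consecutive grid points the increase of $\log|P_N|$ is at most $\log(1+CN/M)=O(N^{1-p})=o(1)$, so $\max_\T\log|P_N(e^{\i\theta})|\le \max_i\log|P_N(e^{\i\theta_i})|+o(1)$. Combining with the union bound over the grid (choosing $p$ small enough, e.g. $p<(1+\delta/2)^2$, relative to $\delta$, which forces a slightly more careful choice but is elementary) gives \eqref{UB0}.

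The main obstacle is the oscillation control in \eqref{UB0}: one must argue carefully that $\log|P_N|$ cannot jump up between grid points by more than $o(\log N)$, i.e. that the only way $\log|P_N|$ varies rapidly is \emph{downward} toward its zeros. The clean way to do this is to bound $\sup_\T\big|\tfrac{d}{d\theta}\log|P_N(e^{\i\theta})|\big|$ on the event $\{\max_\T\log|P_N(e^{\i\theta})|\le K\log N\}$, using that this derivative equals $-\Im\!\big(e^{\i\theta}P_N'(e^{\i\theta})/P_N(e^{\i\theta})\big)$ and that $|P_N'/P_N|$ is comparable to $N/\operatorname{dist}(e^{\i\theta},\text{zeros})$; together with the rigidity estimate \eqref{rig} (which already shows the minimal gap between eigenvalues is at least $N^{-2}$ with overwhelming probability), this yields a crude but sufficient bound $|(\log|P_N|)'|\le N^{C}$ on a high-probability event, which is all that is needed after taking $p$ large enough. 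Alternatively, and more slickly, one restricts the union bound to the event that $\max_\T\log|P_N|\le 2\gamma_*\log N$ (whose complement has vanishing probability, e.g. by \eqref{rig} combined with the trivial bound $\log|P_N|\le N\log 2$, refined via \eqref{Su}), on which the needed smoothness is automatic. I would write up the latter route, since it keeps the argument short and self-contained given the rigidity estimates already established.
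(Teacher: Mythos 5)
Your treatment of \eqref{UB2} is fine and is essentially the paper's argument: the paper bounds $\E^\beta_N\big[|\mathscr{T}^\gamma_N|\big]$ directly from Su's exact moment formula together with the asymptotics $\E^\beta_N\big[|P_N(e^{\i\vartheta})|^\gamma\big]\le C_\beta N^{\gamma^2/2\beta}$ and then applies Markov; your pointwise Gaussian tail is just this moment bound passed through a Chernoff step, so the two routes coincide (only note that Su's theorem gives moments, not tails, so the tail estimate you quote as an ``external input'' must itself be derived this way).

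The proof of \eqref{UB0}, however, has a genuine gap, and it is exactly at the point you flag: the oscillation control between grid points. The per--point probability that $\log|P_N(e^{\i\theta_i})|\ge(1+\delta/2)\sqrt{2/\beta}\log N$ is only polynomially small (at best of order $N^{-(1+\delta/2)^2}$, and $N^{-1-\delta}$ if one only uses moments up to $\gamma_*$), so the union bound tolerates at most about $N^{1+\delta}$ grid points; but every additive smoothness bound you propose forces far more points. The claim that ``where $|P_N|$ is large the function is smooth with derivative bounded by $CN$'' is false: $\frac{\d}{\d\theta}\log|P_N(e^{\i\theta})|=\frac12\sum_j\cot\big(\tfrac{\theta-\theta_j}{2}\big)$ is of order $1/\mathrm{dist}(\theta,\{\theta_j\})$ near any eigenvalue, irrespective of the size of $|P_N|$ there. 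The fallback ``$|(\log|P_N|)'|\le N^C$ on a high--probability event, then take $p$ large enough'' contradicts the union--bound budget $p\lesssim 1+\delta$; and the ``slicker'' route is circular: an a priori event $\max_\T\log|P_N|\le 2\gamma_*\log N$ is itself a statement of the type being proved (rigidity plus the trivial bound only gives $\max_\T|\log|P_N||=O((\log N)^2)$), and even granted, a Bernstein bound $\|P_N'\|_\infty\le N\|P_N\|_\infty\le N^{A+1}$ forces a grid of roughly $N^{A+1-(1+\delta/2)\sqrt{2/\beta}}$ points, which fits the union--bound budget only if $A$ is already essentially $\sqrt{2/\beta}$, i.e.\ only if you already know the answer. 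The paper sidesteps all of this with a \emph{multiplicative} discretization inequality, \cite[Lemma~4.3]{CMN18}: since $P_N$ has degree $N$, $\max_\T|P_N|\le 14\max_{k=1,\dots,2N}|P_N(e^{\i 2\pi k/2N})|$, which after taking logarithms costs only an additive constant and needs only $2N$ points; a union bound with Markov at exponent $\gamma_*$ then gives probability $\lesssim N^{-\delta}$. Some such polynomial discretization (Marcinkiewicz--Zygmund type) input is the missing ingredient in your plan; without it the chaining argument as written cannot be closed.
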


\begin{proof}
These estimates follow by standard arguments using the so--called \emph{first moment method} and the explicit formula for the $\gamma$ moments of $|P_N|$. 
By  \cite[Theorem 1.2]{Su09} case (1) that for any $\gamma>-1$ and $\vartheta\in\T$, 
\[
 \E^\beta_N\left[ |P_N(e^{\i \vartheta})|^\gamma \right] = \prod_{k=0}^{N-1} \frac{\Gamma(1+ \frac{\beta k}{2})\Gamma(1+\gamma+ \frac{\beta k}{2})}{\Gamma(1+ \frac{\beta k+\gamma}{2})^2} . 
\]
By using e.g. the asymptotics of \cite[Theorem~5.1]{DHR18},  this formula implies that  there exists a constant $C_\beta>0$ such that for all $\gamma \in [0,\gamma_*] $,
\begin{equation} \label{UB3}
 \E^\beta_N\left[ |P_N(e^{\i \vartheta})|^\gamma \right]  \le C_\beta N^{\gamma^2/2\beta} .  
\end{equation}
Observe that by definition of the set $\mathscr{T}^\gamma_N$ and Markov's inequality, this estimate implies that for any $\gamma \in [0,\gamma_*]$, 
\[\begin{aligned}
\E_{N}^\beta\left[  |\mathscr{T}_N^{\gamma} |\right] 
&= \int_\T \P_{N}^\beta\left[    | P_N(e^{\i\theta})|   \ge N^{\gamma/\beta}  \right]\d\theta  \\
&\le N^{-\gamma^2/\beta} \int_\T\E^\beta_N\left[ |P_N(e^{\i \vartheta})|^\gamma \right] d\theta\\
&\le  2\pi C_\beta N^{-\gamma^2/2\beta} . 
\end{aligned}\]
By Markov's inequality, this immediately implies \eqref{UB2}. 
In order to prove the second claim, we use that by \cite[Lemma~4.3]{CMN18}, since $P_N$ is a polynomial of degree $N$, we have the deterministic bound:  $\max_\T |P_N| \le 14 \max_{k=1, \dots 2N} |P_N(e^{\i 2\pi k/2N})|$. 
This implies that for any $\delta>0$, we have if $N$ is sufficiently large,
\begin{equation} \label{UB1}
\P^\beta_N\left[ \max_{\theta\in\T} \log|P_N(e^{\i\theta})| \ge (1 +\delta)  \sqrt{\tfrac{2}{\beta}}\log N\right] \le \P^\beta_N\bigg[ \max_{k=1, \dots 2N} |P_N(e^{\i 2\pi k/2N})|  \ge N^{\sqrt{\frac{2}{\beta}}(1 + \delta/2)  }\bigg]  . 
\end{equation}
By a union bound, Markov's inequality and using the estimate \eqref{UB3} with $\gamma =\gamma_*= \sqrt{2\beta}$, we obtain  if $N$ is sufficiently large,
\[ \begin{aligned}
\P^\beta_N\left[ \max_{\theta\in\T} \log|P_N(e^{\i\theta})| \ge (1 +\delta)  \sqrt{\tfrac{2}{\beta}}\log N\right] &\le N^{-\gamma_*\sqrt{\frac{2}{\beta}}(1 + \delta/2)  }  \sum_{k=1, \dots 2N}   \E^\beta_N\left[ |P_N(e^{\i \vartheta})|^{\gamma_*} \right] \\
&\le N^{-2(1+\delta/2)}\cdot 2 C_\beta N^2
= 2 C_\beta N^{-\delta} . 
\end{aligned}\]
 This yields \eqref{UB0}. 
\end{proof}

Recall that for $0<r<1$, $\mathrm{P}_r(\cdot) = 1+  2\sum_{k=1}^{+\infty} r^k \cos(k\cdot) = \frac{1-r^2}{1+r^2 -2r \cos(\cdot)}$ denotes the Poisson kernel. 
Since the function $\Upsilon_N = \log|P_N|$ is harmonic in $\D$, according to \eqref{Upsilon}, we have for any $0<r<1$ and $x\in\T$, 
\begin{equation} \label{UB5}
\Upsilon_{N,r}(x)  =  \int_{\T} \Upsilon_N(\theta)  \mathrm{P}_r(\theta-x) \frac{d\theta}{2\pi} . 
\end{equation}
Using that $\Upsilon_{N,r}$ is  the convolution of $\Upsilon_N$ with a smooth probability density function, we can deduce from Lemma~\ref{lem:TP} and Lemma~\ref{lem:UB} a lower--bound for the Lebesgue measure of the set $\gamma$--thick points of $|P_N| = e^{\Upsilon_N}$.

\begin{proposition}[Lower--bounds] \label{prop:LB}
Fix $\gamma , \beta >0$ and let $\mathscr{T}_N^{\gamma} $ be as in \eqref{TP2}. 
For any small $0<\delta <\gamma$ such that $\gamma+\delta< \gamma_*$, we have as $N\to+\infty$, 
\begin{equation} \label{UB4}
\P^\beta_N\left[  |\mathscr{T}_N^{\gamma} |  \le N^{-(\gamma+\delta)^2/2\beta} \right] \to 0 .
\end{equation}
In particular, we have for any  small $\delta>0$, 
\begin{equation} \label{LB0}
\lim_{N\to+\infty}\P^\beta_N\left[ \max_{\theta\in\T} \log|P_N(e^{\i\theta})| \ge (
1 -\delta)  \sqrt{\tfrac{2}{\beta}}\log N\right] = 1.
\end{equation}

\end{proposition}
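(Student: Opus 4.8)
The plan is to deduce Proposition~\ref{prop:LB} from Lemma~\ref{lem:TP} (the lower bound on the Lebesgue measure of the \emph{regularized} thick-point set $\widetilde{\mathscr{T}}_N^\gamma$ at radius $r_N$), together with the upper bound of Lemma~\ref{lem:UB}. The only genuine work is to transfer the lower bound from the regularized field $\Upsilon_{N,r_N}$ to the boundary field $\Upsilon_N=\log|P_N(e^{\i\cdot})|$. The mechanism is \eqref{UB5}: since $\mathrm{P}_{r_N}$ is a probability density, $\Upsilon_{N,r_N}(x)$ is an \emph{average} of $\Upsilon_N$ over a window whose effective width is of order $1-r_N=(\log N)^6/N$. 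Hence if $\Upsilon_{N,r_N}(x)\ge\frac{\gamma}{\beta}\log N$ at some point $x$, then $\Upsilon_N$ must be at least $\approx\frac{\gamma}{\beta}\log N$ on a nontrivial fraction of that window --- \emph{unless} $\Upsilon_N$ takes a very large value somewhere nearby, which the upper bound \eqref{UB0} forbids with high probability.

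Concretely, here is the order of steps I would carry out. First, fix $0<\delta<\gamma$ with $\gamma+\delta<\gamma_*$ and pick an intermediate level, say work with $\widetilde{\mathscr{T}}_N^{\gamma+\delta/2}$ from Lemma~\ref{lem:TP}, so that on the high-probability event of that lemma, $|\widetilde{\mathscr{T}}_N^{\gamma+\delta/2}|\ge C N^{-(\gamma+\delta)^2/2\beta}$ for $N$ large. Second, condition also on the event $\mathcal{E}_N=\{\max_\T\Upsilon_N\le(1+\delta')\sqrt{2/\beta}\log N\}$ from \eqref{UB0}, for a small $\delta'$ to be chosen. Third, for each $x\in\widetilde{\mathscr{T}}_N^{\gamma+\delta/2}$, split the Poisson average \eqref{UB5} over the window $I_x=\{\theta:|\theta-x|\le a(1-r_N)\}$ and its complement; the tail of $\mathrm{P}_{r_N}$ outside $I_x$ carries mass $O(1/a)$, and on that tail $\Upsilon_N$ is controlled crudely (in absolute value, using $\max_\T|\Upsilon_N|\le N\log 2$ deterministically, or more cheaply the one-sided bound on $\mathcal{E}_N$ combined with a trivial lower bound $\Upsilon_N\ge -\sum_j\log(2)$ --- in fact $\log|P_N|\ge -N\log 2$ pointwise). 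Since $1-r_N$ is polynomially small, the tail contribution is $O(N^{1-o(1)}/a)$... this crude bound is too weak, so instead I would use that $\mathrm{P}_{r_N}$ has an integrable logarithmic-type tail and only $O(1/a)$ of its mass sits at distance $\ge a(1-r_N)$, and bound $\Upsilon_N$ there by its positive part $\le(1+\delta')\sqrt{2/\beta}\log N$ on $\mathcal{E}_N$ and its negative part by a separate one-sided estimate; the point is that after choosing $a$ a large constant the ``leakage'' costs only $O(\log N)$, negligible against $\frac{\gamma}{\beta}\log N$. This forces
\[
\frac{1}{2\pi}\int_{I_x}\Upsilon_N(\theta)\,\mathrm{P}_{r_N}(\theta-x)\,d\theta \ge \Big(\tfrac{\gamma}{\beta}-o(1)\Big)\log N ,
\]
and since $\mathrm{P}_{r_N}$ is bounded by $2/(1-r_N)$ on $I_x$ while $\Upsilon_N\le(1+\delta')\sqrt{2/\beta}\log N$ on $\mathcal{E}_N$, the subset of $I_x$ on which $\Upsilon_N\ge\frac{\gamma'}{\beta}\log N$ (for $\gamma'$ slightly below $\gamma$) must have Lebesgue measure at least a constant multiple of $1-r_N$.

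Fourth, I would run a covering/packing argument: the window $I_x$ around every $x\in\widetilde{\mathscr{T}}_N^{\gamma+\delta/2}$ meets $\mathscr{T}_N^{\gamma'}$ in a set of measure $\gtrsim(1-r_N)$; extracting a maximal $(1-r_N)$-separated subset of $\widetilde{\mathscr{T}}_N^{\gamma+\delta/2}$, of cardinality $\gtrsim|\widetilde{\mathscr{T}}_N^{\gamma+\delta/2}|/(1-r_N)$, the corresponding windows are essentially disjoint, so
\[
|\mathscr{T}_N^{\gamma'}| \;\gtrsim\; \frac{|\widetilde{\mathscr{T}}_N^{\gamma+\delta/2}|}{1-r_N}\cdot(1-r_N) \;\gtrsim\; N^{-(\gamma+\delta)^2/2\beta}.
\]
Choosing $\gamma'$ and the various $\delta$'s so that $\gamma'\ge\gamma$ and $(\gamma+\delta)^2/2\beta$ is the exponent in \eqref{UB4} (shrinking $\delta$ at the end, using that the statement allows any $\delta>0$) gives \eqref{UB4}. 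Finally, \eqref{LB0} is an immediate consequence: \eqref{UB4} with any $\gamma<\gamma_*$ shows $\mathscr{T}_N^\gamma\ne\emptyset$ with high probability, i.e.\ $\max_\T\log|P_N|\ge\frac{\gamma}{\beta}\log N=(1-\delta'')\sqrt{2/\beta}\log N$ for suitable $\gamma$ close to $\gamma_*=\sqrt{2\beta}$.

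The main obstacle I anticipate is the ``leakage'' estimate in the third step: one must show that replacing $\Upsilon_{N,r_N}(x)$ by the truncated average over $I_x$ loses only $o(\log N)$. This requires a clean two-sided control of $\Upsilon_N$ --- the upper side is exactly \eqref{UB0}, but one also needs a cheap lower tail bound to handle the region where $\Upsilon_N$ could a priori be very negative (near eigenvalues). A robust fix is to note that the \emph{negative part} of $\Upsilon_N$ only enters multiplied by the far tail of the Poisson kernel, which has total mass $O(1/a)$, and that $\int_\T(\Upsilon_N)_-\,d\theta$ is itself controllable in expectation (e.g.\ via $\E\int_\T|\log|P_N||\,d\theta=O(\log N)$, or a first-moment bound on $\int|P_N|^{-\epsilon}$), so the leakage from below is also $O(\log N)$ with high probability; alternatively one can invoke the deterministic crude bound together with a slightly larger $a=a(N)\to\infty$ slowly. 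Once this is in place the rest is elementary measure theory and bookkeeping of the exponents.
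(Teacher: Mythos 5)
Your proposal is correct and follows essentially the same route as the paper's proof: the lower bound for the regularized thick set from Lemma~\ref{lem:TP}, the high-probability upper bound on $\max_\T \log|P_N|$ from Lemma~\ref{lem:UB}, a splitting of the Poisson average \eqref{UB5} into a window of width $O(1-r_N)$ around $x$ plus a small-mass tail, and a packing/covering argument converting the local density of boundary thick points into a lower bound on $|\mathscr{T}_N^{\gamma}|$, with \eqref{LB0} deduced from non-emptiness of $\mathscr{T}_N^{\gamma}$ for $\gamma$ near $\gamma_*$. The one obstacle you dwell on --- controlling the negative part of $\Upsilon_N$ in the tail of the Poisson kernel --- is moot: every step of the argument only needs an \emph{upper} bound on the off-window (or off-thick-set) contribution, and negative values of $\Upsilon_N$ only decrease it, which is precisely why the paper's proof requires no lower-tail estimate (and your fallback bound, pairing $\int_\T(\Upsilon_N)_-$ with the sup of the kernel on the tail, would in any case be far too weak since that sup is of order $1/(1-r_N)$).
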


\begin{proof}
Let us fix $0<\delta <\gamma$ such that $\gamma+\delta< \gamma_*$ and define the event 
\[
\mathscr{A}_N = \{ \max_\T \Upsilon_N  \le  (\gamma_* + \tfrac \delta 2) \log N  \} . 
\]
By Lemma~\ref{lem:UB}, we have $\P^\beta_N[\mathscr{A}_N] \to 1$ as $N\to+\infty$.
Let us choose $L>0$ which only depends on the parameters $\delta, \beta>0$ such that
\begin{equation} \label{Poissonconc}
\int_{|\theta| \ge L (1-r)} \mathrm{P}_r(\theta) \frac{d\theta}{2\pi} \le \frac{\delta}{2\gamma_*+\delta} .
\end{equation}
By \eqref{UB5}, let us observe that since $\mathrm{P}_r$ is a smooth probability density function, conditionally on $\mathscr{A}_N$, we have  for any $0<r<1$ and $x\in\T$, 
\[ \begin{aligned}
\Upsilon_{N,r}(x) 
& =  \int_{\T \setminus \mathscr{T}_N^\gamma}\Upsilon_N(\theta)  \mathrm{P}_r(\theta-x) \frac{d\theta}{2\pi} +   \int_{\mathscr{T}_N^\gamma}\Upsilon_N(\theta)  \mathrm{P}_r(\theta-x) \frac{d\theta}{2\pi}  \\
&\le \frac{\gamma}{\beta} \log N 
+ \int_{|\theta -x| \ge L (1-r)} \hspace{-.5cm}\Upsilon_N(\theta)  \mathrm{P}_r(\theta-x) \frac{d\theta}{2\pi}  
+  \int_{\mathscr{T}_N^\gamma \cap |\theta -x| \le L (1-r) } \hspace{-.5cm}\Upsilon_N(\theta)  \mathrm{P}_r(\theta-x) \frac{d\theta}{2\pi}   \\
&  \le \left(\frac{\gamma}{\beta} + \frac \delta2  \right) \log N +  (\gamma_* + \tfrac \delta 2) 
\left( \int_{\mathscr{T}_N^\gamma \cap |\theta -x| \le L (1-r) } \hspace{-.5cm} \mathrm{P}_r(\theta-x) \frac{d\theta}{2\pi} \right) \log N  
\end{aligned} \]
where we used that $\max_\T \Upsilon_N \le (\gamma_* + \frac{\delta}{2}) \log N$ conditionally on $\mathscr{A}_N$ and  \eqref{Poissonconc} at the last step.
Since $\mathrm{P}_r(\theta) \le \frac{2}{1-r}$ for all $\theta \in\T$, this implies that 
\begin{equation} \label{TP3}
\Upsilon_{N,r_N}(x) 
\le \left(\frac{\gamma}{\beta} + \frac \delta2  \right) \log N +  \frac{3\gamma_*  \log N}{\epsilon_N} \big| \theta \in \mathscr{T}_N^\gamma : |\theta -x| \le L \epsilon_N \big|  .
\end{equation}

Choosing $L$ possibly larger, let us assume that $ M = \frac{\pi}{ L\epsilon} \in \N$ and for $k=1, \dots , M$, we choose $x_k \in [\frac{2\pi(k-1)}{M} , \frac{2\pi k}{M}  ]$ such that 
\[
\Upsilon_{N,r_N}(x_k) = \max_{\theta \in [\frac{2\pi(k-1)}{M} , \frac{2\pi k}{M}  ]} \Upsilon_{N,r_N}(\theta) . 
\]
Then we obviously have
\begin{equation} \label{TP4}
| \widetilde{\mathscr{T}}_N^{\gamma+\delta}  | \le \frac {2\pi}{M}  \sum_{k=1, \dots, M} \1_{\{ x_k \in \widetilde{\mathscr{T}}_N^{\gamma+\delta} \}} .
\end{equation}
Moreover,  if $x_k \in \widetilde{\mathscr{T}}_N^{\gamma+\delta}  $, then using the estimate \eqref{TP3}, 
\[
 \big| \theta \in \mathscr{T}_N^\gamma : |\theta -x_k| \le L \epsilon_N \big| 
  \ge \frac{\epsilon_N \delta}{6 \gamma_*}  
 = \frac{\delta}{12L\gamma_*} \frac{2\pi}{M}.  
\] 
By \eqref{TP4}, this implies that  conditionally on $\mathscr{A}_N$: 
\begin{equation}  \label{TP5}
| \widetilde{\mathscr{T}}_N^{\gamma+\delta}  | \le  \frac{\delta}{12L\gamma_*}  \sum_{k=1, \dots, M} \1_{\{ x_k \in \widetilde{\mathscr{T}}_N^{\gamma} \}} \big| \theta \in \mathscr{T}_N^\gamma : |\theta -x_k| \le L \epsilon \big|  . 
\end{equation}
Then observe that since $x_k \in [\frac{2\pi(k-1)}{M} , \frac{2\pi k}{M}  ]$ and the intervals $[\frac{2\pi(k-1)}{M} , \frac{2\pi k}{M}  ]$ are disjoints (expect for the endpoints), we have 
\begin{equation}  \label{TP7}
\sum_{k=1, \dots, M}  \big| \theta \in \mathscr{T}_N^\gamma : |\theta -x_k| \le L \epsilon_N \big| 
\le 2 |\mathscr{T}_N^\gamma| . 
\end{equation}
Using the bounds \eqref{TP5} and \eqref{TP7}, we obtain that conditionally on $\mathscr{A}_N$, 
\[
| \widetilde{\mathscr{T}}_N^{\gamma+\delta}  | \le \frac{\delta}{24L\gamma_*}  |\mathscr{T}_N^\gamma|
\]
This shows that 
\begin{equation}  \label{TP6}
\P^\beta_N\left[  |\mathscr{T}_N^{\gamma} |  \le N^{-(\gamma+2\delta)^2/2\beta} \right] \le 
\P^\beta_N\left[  | \widetilde{\mathscr{T}}_N^{\gamma+2\delta}  |  \le \tfrac{24L\gamma_*}{\delta} N^{-(\gamma+2\delta)^2/2\beta} \right]  + \P^\beta_N[\mathscr{A}_N^c] .
\end{equation}
By Lemma~\ref{lem:TP}, the first term on the RHS of \eqref{TP6} converges to 0 and, by Lemma~\ref{lem:UB}, we also have $\P^\beta_N[\mathscr{A}_N^c] \to 0$ as $N\to+\infty$. 
This completes the proof of \eqref{UB4} (since $\delta>0$ is arbitrary small,  we may replace $2\delta$ by $\delta$ in the end). 
In particular, this shows that the sets $\mathscr{T}_N^{\gamma}$ are non-empty for all $0\le \gamma < \gamma_* = \sqrt{2\beta}$ since they have positive Lebesgue measure. 
This implies the lower--bound \eqref{LB0}. 
\end{proof}

It is now straightforward to complete our proof of Proposition~\ref{prop:TP1}. 

\begin{proof}[Proof of Proposition~\ref{prop:TP1}]
Since the estimates \eqref{UB2} and  \eqref{UB4} hold for any small $\delta>0$, we obtain that any $\gamma \in [0, \gamma_*)$,  
$\displaystyle\frac{\log|\mathscr{T}^\gamma_N |}{ \log N} \to - \frac{\gamma^2}{2\beta}$
 in probability as $N\to+\infty$. 
 Moreover, by combining the estimates \eqref{UB0} and \eqref{UB0}, we also obtain the claim \eqref{max1} for the maximum of $\log|P_N|$. 
\end{proof}

\begin{remark} \label{rk:imagpart}
The proof of Proposition~\ref{prop:TP2} follows from similar arguments. 
In particular, by  Theorem~\ref{thm:gmc2}, we obtain the counterpart of Lemma~\ref{lem:TP} for the thick points of the field $\Psi_{N,r_N}$, \eqref{imcharpoly}. Since we  have for any $0\le r<1$ and $x\in\T$, 
\[
\Psi_{N,r}(x)  =  \int_{\T} \Psi_N(\theta)  \mathrm{P}_r(\theta-x) \frac{d\theta}{2\pi} ,
\]
 by going through the proof of Proposition~\ref{prop:LB}, we obtain that for any small $0<\delta <\gamma$ such that $\gamma+\delta< \gamma_*$, 
 \begin{equation*} 
\lim_{N\to+\infty}\P^\beta_N\left[  |\widetilde{\mathscr{T}}_N^{\gamma} |  \le N^{-(\gamma+\delta)^2/2\beta} \right] \to 0 .
\end{equation*}
 The complementary upper--bound for $  |\widetilde{\mathscr{T}}_N^{\gamma} |$ is obtained by the \emph{first moment method} as in Lemma~\ref{lem:UB} using the asymptotics from \cite[Theorem 1.2]{Su09} case (2): for any $|\gamma|<2$ and $\theta\in\T$,
\begin{equation} \label{ZG1}
\E^\beta_N \big[ e^{\gamma \Psi_N(\theta)} \big]  =  \prod_{k=0}^{N-1}  \frac{\Gamma(1+ \frac{k\beta}{2})^2}{\Gamma(1+ \frac{k\beta+\i \gamma}{2}) \Gamma(1+ \frac{k\beta-\i \gamma}{2})} . 
\end{equation}
To obtain these asymptotics, one must take $s= -\i \gamma/2$ in \cite[Theorem 1.2]{Su09} and observe that according to formula \eqref{Psi}, we have for all $\vartheta \in \T \setminus \{ \theta_k\}_{k=1}^N $, 
\begin{equation} \label{Psi2}
\Psi_N(\vartheta) =\Im \log\left( {\textstyle \prod_{j=1}^n }(1- e^{\i(\vartheta-\theta_j)}) \right) . 
\end{equation}
To obtain \eqref{Psi2}, it suffices to observe e.g. that $\Psi_N = \U \Upsilon_N$ where $\Upsilon_N = \Re \log P_N$ and $\U$ is the Hilbert transform.
Moreover, in order to obtain the upper--bound for $\max_\T \Psi_N$, one cannot use the estimate \eqref{UB1} as in Lemma~\ref{lem:UB}. Instead, since $\psi(\theta) = \frac{\theta-\pi}{2} $ for all $\theta \in (0, 2\pi)$, by formula \eqref{Psi}, we have the deterministic bound:
\begin{equation} \label{UB8}
 \Psi_N(\vartheta)  \le \Psi_N(\tfrac{2\pi(k-1)}{N}) + \pi  , \qquad \vartheta \in[\tfrac{2\pi (k-1)}{N}, \tfrac{2\pi k}{N}] , 
 \end{equation}
 for $k=1, \dots,  N$. So we can just replace the estimate \eqref{UB1} by 
 \[
 \P^\beta_N\left[ \max_{\theta\in\T} \Psi_N(\theta)\ge (1 +\delta)  \sqrt{\tfrac{2}{\beta}}\log N\right] \le \P^\beta_N\bigg[ \max_{k=1, \dots N}  \Psi_N(\tfrac{2\pi(k-1)}{N})   \ge  (1 +\delta)  \sqrt{\tfrac{2}{\beta}}\log N - \pi\bigg]  
 \]
and use a union bound in order to deduce the upper--bound for  $\max_\T \Psi_N$. 
\hfill $\blacksquare$\end{remark}

\subsection{Optimal rigidity: Proof of Corollary~\ref{cor:rig}.} \label{sect:rig}

This is a direct consequence of Proposition~\ref{prop:TP2}, we give the details for completeness.  
Let us define the (centered) eigenvalue counting function
\begin{equation} \label{h}
h_N(\theta) =  \sum_{j\le N} \1_{\theta_j \in [0, \theta]}  - \frac{N \theta}{2\pi}   . 
\end{equation}
Since $\psi(\theta) = \frac{\theta-\pi}{2} $ for all $\theta \in (0, 2\pi)$, by formula \eqref{Psi}, 
the  function $\Psi_N$ is piecewise linear  on $\T \setminus \{ \theta_k\}_{k=1}^N $ and it jumps by $-\pi$ at the points $\theta_1, \dots, \theta_N$. 
Then, since $h_N(0) =0$, we have for all $\vartheta \in \T \setminus \{ \theta_k\}_{k=1}^N$,
\begin{equation} \label{eigcount}
h_N(\theta) = \frac{\Psi_N(0) - \Psi_N(\vartheta)}{\pi}.  
\end{equation}

From the asymptotics \eqref{ZG1}, we can deduce that for any $\delta\in [0,1]$,  
\[
\E^\beta_N \big[ e^{ \delta \Psi_N(0)} \big] \le   C_\beta N^{\delta^2/2\beta} , 
\]
for a constant $C_\beta$ which only depends on $\beta>0$ -- see \eqref{exp_moment} in the appendix~\ref{app:max}. 
This estimate implies that  for any $\delta\in [0,1]$,  
\begin{equation} \label{UB6}
\P_N^\beta\left[ \Psi_N(0) \ge \tfrac{\delta}{\beta} \log N \right] \le N^{-\delta^2/\beta} \E^\beta_N \big[ e^{ \delta \Psi_N(0)} \big] \le   C_\beta N^{-\delta^2/2\beta} . 
\end{equation}
Then, we deduce from Proposition~\ref{TP2} and Remark~\ref{rk:symmetry} that as $N\to+\infty$, 
\begin{equation} \label{UB7}
\frac{\max_\T|\Psi_N(\theta)|}{\log N} \to \sqrt{\frac{2}{\beta}} . 
\end{equation}
By formula \eqref{eigcount}, combining \eqref{UB7} and the estimate \eqref{UB6}, we obtain as $N\to+\infty$, 
\[
\frac{\max_\T| h_N(\theta)|}{\log N} \to  \frac 1\pi \sqrt{\frac{2}{\beta}} . 
\]
Finally since $\displaystyle \max_\T| h_N(\theta)| \le \max_{k=1, \dots , N}| h_N(\theta_k)| +1$ where $\theta_1, \dots , \theta_N$ are the \Cbeta\ eigenvalues, this implies that  for any $\beta >0$ and $\delta>0$, 
\begin{equation} 
\lim_{N\to+\infty} \P_N^\beta\left[ \frac{1-\delta}{\pi} \sqrt{\frac{2}{\beta}} \log N \le \max_{k=1,\dots , N}  \big| h_N(\theta_k)  \big| \le  \frac{1+\delta}{\pi} \sqrt{\frac{2}{\beta}}  \frac{\log N}{N} \right] =1 . 
\end{equation}
Since, by \eqref{h},  $h_N(\theta_k) = \frac{N}{2\pi}\left( \frac{2\pi k}{N} -   \theta_k \right)$ for $k=1, \dots , N$, this completes the proof. 

\section{Large deviation estimates for the eigenvalue counting function} \label{sect:LD}

Recall that we denote by $h_N$  the (centered) eigenvalue counting function \eqref{h}.
Note that almost surely, $h_N$ is a c\`adl\`ag function on $\T$ such that $\|h_N\|_{\infty} \le  N$ and  $h_N' = \widetilde{\mu}_N$
 in the sense that for any  function $f \in \Co^1(\T)$, we have 
\begin{equation} \label{h'}
\int f'(\theta) h_N(\theta)  d\theta =  - \int f d \widetilde{\mu}_N . 
\end{equation}

In this section, by using the connection between the eigenvalue  counting function and the logarithm of the characteristic polynomial, see formula \eqref{eigcount} above, we investigate the probability that $h_N$ takes extreme values. We obtain the following large deviation estimates. 

\begin{proposition} \label{prop:LDP}
Let $w\in \Co^1(\T)$ such that $\| w'\|_{L^1(\T)} \le \eta$ (where $\eta$ may depend on $N\in\N$). There exists $N_\beta\in\N$ such that for all $N\ge N_\beta$,
\begin{equation} \label{LDP}
\P^\beta_{N, w}\Big[ \max_\T|h_N| \ge \tfrac{\sqrt{2}}{\beta} \eta \log N \Big] 
\le \sqrt{\eta \log N} N^{1- \eta^2/2\beta}  . 
\end{equation}
\end{proposition}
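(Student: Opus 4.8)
The plan is to exploit the identity \eqref{eigcount}, namely $h_N(\theta) = \pi^{-1}(\Psi_N(0) - \Psi_N(\theta))$, which reduces controlling $\max_\T |h_N|$ to controlling the oscillation of the imaginary part of the logarithm of the characteristic polynomial. Equivalently, since $\Psi_N = \U\Upsilon_N$ with $\Upsilon_N = \log|P_N|$, one is after a bound on the fluctuations of a linear statistic with a specific (discontinuous, but $L^1$-derivative) test function. The cleanest route under the biased measure $\P^\beta_{N,w}$ is an exponential Chebyshev/Markov bound: for $\lambda>0$ to be optimized, estimate $\E^\beta_{N,w}[e^{\lambda \Psi_N(\theta)}]$ and $\E^\beta_{N,w}[e^{-\lambda\Psi_N(\theta)}]$ uniformly in $\theta\in\T$, then union-bound over a fine ($N$-point, say mesh of size $2\pi/N$) grid using the deterministic control \eqref{UB8} on how much $\Psi_N$ can move inside one gap $[\tfrac{2\pi(k-1)}{N},\tfrac{2\pi k}{N}]$.

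The key input is that $e^{\lambda\Psi_N(\theta)\pm\lambda\Psi_N(0)}$ is, up to a multiplicative constant, $e^{\int v\,\d\mu_N}$ for a test function $v$ with $\|v'\|_{L^1(\T)}$ of order $\lambda$, to which one applies Theorem~\ref{clt:global} (via Lemma~\ref{lem:Jo} and the global-regime estimate of Proposition~\ref{prop:W}) \emph{after} tilting by $w$ — i.e.\ one works with the test function $w + v$, whose $L^1$-derivative norm is at most $\eta + C\lambda$, and uses that $\log\E^\beta_N[e^{\int(w+v)\d\mu_N}] = \tfrac1\beta\sigma^2(w+v)+o((\log N)^2)$ while $\sigma^2$ is quadratic, so the leading Gaussian contribution is $\tfrac1\beta(\sigma^2(w+v)-\sigma^2(w))$. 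For the relevant $v$ (a multiple of $\psi(\cdot-\theta)-\psi(-\theta)$, whose Hilbert transform one can read off), $\sigma^2(w+v) - \sigma^2(w)$ produces the variance term $\lambda^2\beta^{-1}\log N + O(\lambda^2 + \lambda\eta\log N)$ after a Parseval computation, exactly as in the asymptotics \eqref{modG1}/\eqref{ZG1} used elsewhere — indeed one can instead just quote the exact moment formula \eqref{ZG1} together with the exponential-moment bound derived in the appendix (the bound $\E^\beta_N[e^{\delta\Psi_N(0)}]\le C_\beta N^{\delta^2/2\beta}$ cited around \eqref{UB6}), which is cleaner and avoids re-deriving anything.

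Carrying this out: first fix the grid $\theta_j^\ast = 2\pi j/N$, $j=1,\dots,N$; by \eqref{UB8} (and its reflection), $\max_\T|h_N| \le \pi^{-1}\max_{j}|\Psi_N(0)-\Psi_N(\theta_j^\ast)| + 1$. Second, for each $j$ and each sign, Chebyshev gives $\P^\beta_{N,w}[\pm(\Psi_N(0)-\Psi_N(\theta_j^\ast)) \ge t] \le e^{-\lambda t}\,\E^\beta_{N,w}[e^{\pm\lambda(\Psi_N(0)-\Psi_N(\theta_j^\ast))}]$, and the expectation under $\P^\beta_{N,w}$ equals $\E^\beta_N[e^{\int(w+v_j^\pm)\d\mu_N}]/\E^\beta_N[e^{\int w\d\mu_N}]$; both numerator and denominator are handled by Lemma~\ref{lem:Jo}, giving a bound $C N^{(\lambda^2 + 2\lambda\eta)/2\beta}\cdot N^{o(1)}$ — more precisely one checks the cross term is controlled so that the exponent is $\tfrac{\lambda^2}{2\beta} + \tfrac{\lambda\eta}{\beta}$ up to lower order, consistent with the shape of \eqref{W6}. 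Third, choose $t = \pi\tfrac{\sqrt2}{\beta}\eta\log N$ (the target) and $\lambda = \tfrac{\eta}{\sqrt2}\cdot\tfrac{1}{\log N}\cdot(\text{const})$ — optimizing $-\lambda t + \tfrac{\lambda^2}{2\beta}\log N + \tfrac{\lambda\eta}{\beta}\log N$ over $\lambda$ — which yields a per-point probability $\le N^{-\eta^2/2\beta}$ up to a $\sqrt{\eta\log N}$-type prefactor; union-bounding over the $N$ grid points and both signs gives the claimed $\sqrt{\eta\log N}\,N^{1-\eta^2/2\beta}$.

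The main obstacle is bookkeeping of the cross term between the tilt $w$ and the auxiliary test function $v$: one must verify that $\sigma^2(w+v)-\sigma^2(w)$ (equivalently, that the ratio of the two Laplace transforms) contributes only $O(\lambda\eta\log N)$ and not something worse, so that the exponent genuinely matches $\eta^2/2\beta$ after optimization in $\lambda$; this requires using $\|w'\|_{L^1}\le\eta$ together with the explicit form of $v$ (i.e.\ $\|v'\|_{L^1}\lesssim\lambda$ and the Hilbert-transform identities from Proposition~\ref{prop:U}), and keeping the error terms $R_0,R_1,R_2$ from Proposition~\ref{prop:W} under control uniformly — but since $v$ is a multiple of the smooth functions $\psi_r$ in the limit $r\to1$ it is slightly singular, so in practice one either regularizes at scale $r_N$ as in \eqref{ZG1} or, more efficiently, bypasses Theorem~\ref{clt:global} for $v$ altogether and invokes the exact formula \eqref{ZG1} plus a Cauchy–Schwarz decoupling $\E^\beta_{N,w}[e^{\lambda(\cdots)}] \le \big(\E^\beta_{N,2w}[\,\cdot\,]\big)^{1/2}\big(\E^\beta_N[e^{2\lambda(\cdots)}]\big)^{1/2}\big/\E^\beta_N[e^{\int w\d\mu_N}]$ to separate the two test functions cleanly.
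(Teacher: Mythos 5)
Your primary route is circular within the paper's logical structure: you propose to bound the tilted Laplace transforms $\E^\beta_N[e^{\int(w+v)\,\d\mu_N}]$ via Lemma~\ref{lem:Jo} together with Proposition~\ref{prop:W} (i.e.\ the Theorem~\ref{clt:global} machinery), but the error term $\delta_N$ in Lemma~\ref{lem:Jo} is only controlled through Proposition~\ref{prop:W}, whose proof rests on the concentration bound of Proposition~\ref{prop:rig}, and Proposition~\ref{prop:rig} is itself deduced from Proposition~\ref{prop:LDP} --- the very statement you are proving. Regularizing $v$ at scale $r_N$ does not repair this, since those regularized estimates (as in the proofs of Theorems~\ref{thm:gmc1} and~\ref{thm:gmc2}) again invoke Proposition~\ref{prop:W}; and $v$, built from $\psi(\cdot-\theta)$, is not even continuous, as you note. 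So the proof must rest on inputs independent of the loop-equation CLT, which is exactly what your fallback does: Su's exact formula \eqref{ZG1} (yielding $\E^\beta_N[e^{\gamma\Psi_N(\theta)}]\le e^{c_\beta\gamma^2}N^{\gamma^2/2\beta}$), exponential Chebyshev, the deterministic bound \eqref{UB8} with a union bound over the grid $\{2\pi k/N\}$, and Cauchy--Schwarz to decouple the tilt $w$. That fallback is sound and is essentially the paper's own argument: the paper first proves the \emph{unbiased} bound $\P^\beta_N[\max_\T|h_N|\ge t]\le 3Ne^{-\beta t^2/\log N}$ (Lemma~\ref{lem:maxh}, by precisely this scheme applied to $|\Psi_N|$ and $|\Psi_N(0)|$ at the grid points), and then transfers to $\P^\beta_{N,w}$ by a \emph{single} Cauchy--Schwarz applied to the global event $\{\max_\T|h_N|\ge t\}$.

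Two points left open in your outline need care. First, the tilt factor: after Cauchy--Schwarz you need both $\E^\beta_N[e^{\int w\,\d\widetilde{\mu}_N}]\ge 1$ (Jensen, since $\widetilde{\mu}_N$ is centered) and a bound of the form $\E^\beta_N[e^{2\int w\,\d\widetilde{\mu}_N}]\le 1+C\eta\sqrt{\log N}\,N^{1+\eta^2/\beta}$; the paper obtains the latter from $|\int w\,\d\widetilde{\mu}_N|\le\|w'\|_{L^1(\T)}\max_\T|h_N|$ and the unbiased Gaussian tail of Lemma~\ref{lem:maxh} by integrating the tail (cf.\ \eqref{est0}) --- this is the only place the hypothesis $\|w'\|_{L^1(\T)}\le\eta$ enters, and it is missing from your plan. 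Second, the order of operations matters quantitatively. You apply Chebyshev and the tilt Cauchy--Schwarz \emph{per grid point} and union bound at the end, so the factor $\big(\E^\beta_N[e^{2\int w\,\d\widetilde{\mu}_N}]\big)^{1/2}\approx N^{1/2+\eta^2/2\beta}$ is paid at each of the $\sim N$ points and the multiplicity $N$ enters linearly; in the paper's ordering the union bound sits inside the unbiased probability, which is then square-rooted, so the multiplicity costs only $N^{1/2}$. With only \eqref{ZG1} and Cauchy--Schwarz available (your Gaussian-heuristic accounting of the variance and cross term is not justified without the CLT, which is the circular branch), the per-point tail after the two Cauchy--Schwarz steps is $N^{-\pi^2\eta^2/8\beta}$, and your advertised per-point bound $N^{-\eta^2/2\beta}$, hence \eqref{LDP}, only comes out when $(\tfrac{\pi^2}{8}-1)\eta^2\gtrsim\beta$, i.e.\ $\eta\gtrsim\sqrt{\beta}$ (enough for the paper's applications, but not for the statement as claimed for all $\eta$). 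The fix is simply to swap the order: one Cauchy--Schwarz on the global event, union bound inside the unbiased factor --- which is the paper's proof.
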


For the proof of Proposition~\ref{prop:LDP}, we need the following Lemma which is an easy consequence of a result from Su \cite{Su09}. 
For completeness, the proof of Lemma~\ref{lem:maxh} is given in the Appendix~\ref{app:max}.

\begin{lemma} \label{lem:maxh}
There exists $N_\beta\in\N$, such that for all $N\ge N_\beta$ and any $t>0$ $($possibly depending on $N)$,
\[ 
\P^\beta_{N}\left[ \max_\T|h_N| \ge t\right] \le  3N e^{- \frac{\beta t^2}{\log N}}.
\]
\end{lemma}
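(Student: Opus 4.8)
The plan is to reduce the maximum over $\T$ of $|h_N|$ to an estimate on the imaginary part of the logarithm of the characteristic polynomial, which has exponential moments that are exactly computed by Su's formula \eqref{ZG1}. First I would recall the identity \eqref{eigcount}, namely that for any $\vartheta\in\T\setminus\{\theta_k\}$ one has $h_N(\vartheta)=\pi^{-1}(\Psi_N(0)-\Psi_N(\vartheta))$, where $\Psi_N(\vartheta)=\sum_j\psi(\vartheta-\theta_j)$ with $\psi(\theta)=\frac{\theta-\pi}{2}$ on $(0,2\pi)$. Since $\Psi_N$ is piecewise linear between consecutive eigenvalues and $h_N$ is piecewise constant, the supremum of $|h_N|$ is attained (up to a deterministic $O(1)$ error coming from the jump of size $\pi$) at one of the eigenvalues, so it suffices to control $\max_\T|\Psi_N|$, and by the triangle inequality $\pi\max_\T|h_N|\le |\Psi_N(0)|+\max_\T|\Psi_N|\le 2\max_\T|\Psi_N|$ — actually it is cleaner to bound $\max_\T|h_N|\le \frac{2}{\pi}\max_\T|\Psi_N|$ directly. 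So the whole problem becomes: show $\P^\beta_N[\max_\T|\Psi_N|\ge \frac{\pi}{2}t]$ is small.

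Next I would discretize. Because $\Psi_N$ restricted to each arc $[\tfrac{2\pi(k-1)}{N},\tfrac{2\pi k}{N}]$ differs from its value at the left endpoint by at most $\pi$ (this is the deterministic bound \eqref{UB8}, and by the symmetry of Remark~\ref{rk:symmetry} the same holds for $-\Psi_N$), we get $\max_\T|\Psi_N|\le \max_{k=1,\dots,N}|\Psi_N(\tfrac{2\pi(k-1)}{N})|+\pi$. Then a union bound over the $N$ points reduces everything to a one-point tail estimate: for fixed $\vartheta$ and any $\delta>0$,
\[
\P^\beta_N\big[|\Psi_N(\vartheta)|\ge s\big]\le e^{-\delta s}\,\E^\beta_N\big[e^{\delta\Psi_N(\vartheta)}\big]+e^{-\delta s}\,\E^\beta_N\big[e^{-\delta\Psi_N(\vartheta)}\big],
\]
using Markov's inequality in both directions. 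By translation invariance of the \Cbeta\ we may take $\vartheta=0$, and by \eqref{ZG1} (with $s=-\i\delta/2$ in Su's notation, $|\delta|<2$) the exponential moment equals $\prod_{k=0}^{N-1}\frac{\Gamma(1+\tfrac{k\beta}{2})^2}{\Gamma(1+\tfrac{k\beta+\i\delta}{2})\Gamma(1+\tfrac{k\beta-\i\delta}{2})}$, which is real and symmetric in $\delta\mapsto-\delta$, so the two terms coincide. Using the asymptotics of the Barnes $G$-function (as in \cite[Theorem~5.1]{DHR18} or the argument in \eqref{exp_moment}) this product is bounded by $C_\beta N^{\delta^2/2\beta}$ uniformly for $\delta\in[0,1]$. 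Hence $\P^\beta_N[|\Psi_N(0)|\ge s]\le 2C_\beta N^{\delta^2/2\beta}e^{-\delta s}$.

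Finally I would assemble the pieces. Taking $s=\tfrac{\pi}{2}t-\pi$ and applying the union bound gives, for $N$ large,
\[
\P^\beta_N\Big[\max_\T|\Psi_N|\ge\tfrac{\pi}{2}t\Big]\le 2C_\beta\,N^{1+\delta^2/2\beta}e^{-\delta(\tfrac{\pi}{2}t-\pi)}.
\]
Then $\P^\beta_N[\max_\T|h_N|\ge t]\le\P^\beta_N[\tfrac{2}{\pi}\max_\T|\Psi_N|\ge t]$, and after relabeling $t$ by an absolute constant one optimizes in $\delta\in[0,1]$; choosing $\delta$ proportional to $\tfrac{\beta t}{\log N}$ (truncated at $1$, which is harmless in the regime where the bound is nontrivial) produces the Gaussian-type decay $e^{-\beta t^2/\log N}$, and absorbing the polynomial prefactors and the constants $C_\beta$ into the factor $3N$ for $N\ge N_\beta$ yields $\P^\beta_N[\max_\T|h_N|\ge t]\le 3Ne^{-\beta t^2/\log N}$. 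The main obstacle is getting the constant in the exponent right: one must be careful that the deterministic $O(1)$ shifts (the jump of size $\pi$, the discretization error, the factor $\tfrac{2}{\pi}$ relating $h_N$ and $\Psi_N$) and the polynomial prefactor $\sqrt{\eta\log N}$-type corrections do not degrade the coefficient $\beta$ in $e^{-\beta t^2/\log N}$ — this forces the clean bookkeeping above where one first reduces to $\Psi_N$ at grid points, uses the exact moment formula rather than a crude bound, and only optimizes in $\delta$ at the very end.
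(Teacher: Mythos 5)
Your overall route is the one the paper itself takes: pass from $h_N$ to $\Psi_N$ via \eqref{eigcount}, reduce the supremum to the grid $\{\tfrac{2\pi k}{N}\}$ with an additive error $\pi$ via \eqref{UB8}, then use a union bound and a Chernoff bound built on Su's formula \eqref{ZG1}, optimizing in the Chernoff parameter at the end. However, there is one genuine gap. You only claim the exponential--moment bound $\E^\beta_N[e^{\delta\Psi_N(0)}]\le C_\beta N^{\delta^2/2\beta}$ for $\delta\in[0,1]$ and then ``truncate $\delta$ at $1$'', asserting this is harmless where the bound is nontrivial. It is not: the lemma is stated for \emph{every} $t>0$, possibly depending on $N$, and it is later invoked with $t$ much larger than $\log N$ (Proposition~\ref{prop:LDP} is applied with $\eta\asymp\log N$ in Section~\ref{sect:gmc1}, i.e.\ $t\asymp(\log N)^2$). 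With $\delta\le 1$ the best tail Markov can give is of order $N^{1+1/2\beta}e^{-\pi t/2}$, and comparing exponents, $\tfrac{\beta t^2}{\log N}\le\tfrac{\pi t}{2}$ forces $t\lesssim\tfrac{\pi\log N}{2\beta}$; beyond that range your bound is weaker than the claimed $3Ne^{-\beta t^2/\log N}$. Even at the first nontrivial threshold $t\approx\sqrt{\log(3N)\log N/\beta}$ the optimal parameter is $\approx\tfrac{\pi\sqrt{\beta}}{2}$, which already exceeds $1$ when $\beta>4/\pi^2$. The fix is exactly what the paper does in Appendix~A: $|\Psi_N|\le\pi N$ makes both sides of \eqref{ZG1} analytic in the strip $|\Im\gamma|\le 2$, so the identity holds for all real $\gamma$, and the infinite--product form \eqref{ZG2} together with $\sum_{\ell\ge1}(\alpha+2\ell)^{-2}\le\tfrac{1}{2\alpha}$ yields \eqref{exp_moment}, valid for \emph{every} $\gamma\in\R$; one may then take $\delta$ proportional to $\beta(t-1)/\log N$ with no truncation, and the factor $\pi^2/8>1$ in the optimized exponent absorbs the $O(1)$ shifts, the constant $e^{c_\beta\gamma^2}$ and the prefactor $N$ for $N\ge N_\beta$.

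A secondary, easily repaired point: \eqref{UB8} is one--sided, and Remark~\ref{rk:symmetry} is an equality in law of fields, not a pathwise statement, so it does not give $|\Psi_N(\vartheta)-\Psi_N(\tfrac{2\pi(k-1)}{N})|\le\pi$ on each arc (this is false when several eigenvalues fall in the arc). What you need is the companion bound at the \emph{right} endpoint: moving right, the continuous part of $\Psi_N$ increases by at most $\pi$ while the jumps are negative, so $\Psi_N(\tfrac{2\pi k}{N})\le\Psi_N(\vartheta)+\pi$, i.e.\ $-\Psi_N(\vartheta)\le-\Psi_N(\tfrac{2\pi k}{N})+\pi$. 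With this, $\max_\T|\Psi_N|\le\max_{0\le k\le N}|\Psi_N(\tfrac{2\pi k}{N})|+\pi$ does hold pathwise and your union bound goes through; alternatively, the symmetry of Remark~\ref{rk:symmetry} can be used, but only at the level of $\P[\max_\T(-\Psi_N)\ge s]=\P[\max_\T\Psi_N\ge s]$, at the cost of a factor $2$.
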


Let us observe that since $h_N(\theta_k) = \frac{N}{2\pi}\left( \frac{2\pi k}{N} -   \theta_k \right)$
for $k=1,\dots, N$, Lemma~\ref{lem:maxh} immediately implies the \emph{rigidity estimate} \eqref{rig}. 
We now turn to the proof of Proposition~\ref{prop:LDP}.

\begin{proof}[Proof of Proposition~\ref{prop:LDP}]
Observe that by \eqref{h'}, we have
\[
\left| \int w \widetilde{\mu}_N \right| \le  \| w'\|_{L^1(\T)}  \max_\T|h_N| . 
\]
By Lemma~\ref{lem:maxh}, this implies that if $N\ge N_\beta$, then for any $t>0$, 
\[ 
\P^\beta_{N}\Big[ \left| \int w \widetilde{\mu}_N \right| \ge t\Big] \le 3 N 
 e^{- \frac{\beta t^2}{\eta^2\log N}}.
\]
Using this Gaussian tail--bound, we can estimate the Laplace transform of the linear statistics $\int w d\widetilde{\mu}_N$, we obtain
\[ \begin{aligned}
 \E^\beta_N \big[ e^{\int w d\widetilde{\mu}_N}\big] & = 1+ \int_0^{+\infty} \P^\beta_{N}\Big[  \int w \widetilde{\mu}_N  \ge t\Big] e^t dt \\
 & \le 1+ 3N \int_0^{+\infty}  e^{t -  \frac{\beta t^2}{\eta^2\log N}} dt . 
\end{aligned}\]
By completing the square, we obtain 
\[  
 \E^\beta_N \big[ e^{\int w d\widetilde{\mu}_N}\big] 
  \le 1+ 3N  e^{\frac{ \eta^2 \log N}{4\beta}} \int_1^{+\infty}  e^{-
 \frac{\beta}{\eta^2\log N}(t- \frac{\eta^2 \log N}{2\beta})^2} dt 
 \]
 So if $N$ is sufficiently large (depending on $\eta>0$ and $\beta>0$), this shows that
 \begin{equation}
  \label{est0}
  \E^\beta_N \big[ e^{\int w d\widetilde{\mu}_N}\big] 
 \le  1+ 3 \eta N^{1+ \eta^2/4\beta} \sqrt{\pi \log N/\beta} . 
\end{equation}
On the other hand, by Jensen's inequality and the fact that $\widetilde{\mu}_N$ is centered, we have
\[
 \E^\beta_N \big[ e^{\int w d\widetilde{\mu}_N}\big] \ge e^{ \E^\beta_N[ \int w d\widetilde{\mu}_N] } =1 .
\]
Therefore, by Cauchy-Schwartz inequality, we have for any $t>0$, 
\[ \begin{aligned}
\P^\beta_{N, w}\left[ \max_\T|h_N| \ge t \right] 
&\le \E^\beta_N \big[ \1_{ \max_\T|h_N| \ge t } e^{\int w d\widetilde{\mu}_N}\big]  \\
& \le  \sqrt{\P^\beta_N \big[\max_\T|h_N| \ge t \big]\E^\beta_N \big[e^{2\int w d\widetilde{\mu}_N}\big]}\\
&\le  C (\eta^2 \log N /\beta)^{1/4} N^{1+ \eta^2/2\beta} \ e^{- \frac{\beta t^2}{2\log N}} . 
\end{aligned}\]
For the last step, we used Lemma~\ref{lem:maxh} and the estimate \eqref{est0} replacing $w$ by $2w$ (this only changes $\eta$ by $2\eta$). 
From our last estimate, we obtain \eqref{LDP} by taking $t = \frac{\sqrt{2}}{\beta} \eta \log N$. 
\end{proof}

Finally it remains to give a proof of Proposition~\ref{prop:rig}. 

\begin{proof}[Proof of Proposition~\ref{prop:rig}]
Fix  $n\in\N$ and $R>0$. Like  \eqref{h'}, since $h_N' = \widetilde{\mu}_N$, we have for any $f\in C^n(\T^n)$, 
\[ \begin{aligned}
 \int_{\T^n} f(x_1,\dots, x_n)  \widetilde{\mu}_N(dx_1)\cdots \widetilde{\mu}_N(dx_n)  
 =  (-1)^n \int_{\T^n}  \frac{\d}{\d x_1} \cdots \frac{\d}{\d x_n} f(x_1,\dots, x_n) h_N(x_1) \cdots h_N(dx_n) dx_1 \cdots dx_n .
\end{aligned}\]
This implies that for any $f\in\mathscr{F}_{n,R}$,
\[
\left|  \int_{\T^n} f(x_1,\dots, x_n)  \widetilde{\mu}_N(dx_1)\cdots \widetilde{\mu}_N(dx_n) \right| \le R \left( \max_\T | h_N |\right)^n ,
\]
so that for any $\epsilon>0$, 
\[ \begin{aligned}
&\P^\beta_{N,w}\left[ 
\sup_{f\in \mathscr{F}_{n,R}}\bigg| \int_{\T^n} f(x_1,\dots, x_n)  \widetilde{\mu}_N(dx_1)\cdots \widetilde{\mu}_N(dx_n)  \bigg| \ge  R(\tfrac{\sqrt{2}}{\beta} \eta \log N)^{n}  \right]   \\
& \qquad 
\le \P^\beta_{N,w} \Big[   \max_\T | h_N | \ge \tfrac{\sqrt{2}}{\beta} \eta \log N \Big] . 
\end{aligned}  \]
Hence, the claim follows directly from from  Proposition~\ref{prop:LDP}.\end{proof}

\section{Proof of Theorem~\ref{clt:sine}}  \label{sect:VV}

Throughout this section, we use the notation from  Theorem~\ref{thm:coupling}. 
Let $w\in \Co^{3+\alpha}_c(\R)$, $0<\epsilon<1/4$ and $\nu > 4$. 
Without loss of generality, we assume that $\operatorname{supp}(w) \subseteq [-\frac 12, \frac12]$ -- if not we can consider instead the test function
$w_R = w(\cdot R)$ and observe that $\| w\|_{H^{1/2}(\R)} = \| w_R\|_{H^{1/2}(\R)}$. 
We also define $N(\nu) = \lfloor \exp(\nu^{1/4}) \rfloor$ and $ L(\nu) = \frac{N(\nu)}{2\pi\nu} $. 
We will need the following simple consequence of eigenvalue rigidity.

\begin{lemma} \label{lem:trunc}
There exists a random integer $\mathrm{N}_\epsilon \in\N$ such that for all $N\ge \mathrm{N}_\epsilon$, 
\[
\sum_{k\in \Z} w(\theta_k L)  = \sum_{|k| \le \nu} w(\theta_k L)   ,
\qquad\text{and}\qquad
\sum_{k\in \Z} w(\lambda_k \nu^{-1})  = \sum_{|k| \le \nu} w(\lambda_k \nu^{-1})    . 
\]
\end{lemma}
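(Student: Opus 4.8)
The plan is to use the rigidity estimates established earlier, together with the Valk\'o--Vir\'ag coupling of Theorem~\ref{thm:coupling}, to show that both sums are supported on indices $|k|\le\nu$ once $N$ is large. First I would recall that $w$ is supported in $[-\tfrac12,\tfrac12]$ and that $L=L(\nu)=\frac{N}{2\pi\nu}$, so $w(\theta_k L)\neq0$ forces $|\theta_k|\le \frac{\pi\nu}{N}$, i.e. $\theta_k$ lies within distance $O(\nu/N)$ of the origin. To control which indices $k$ this permits, I would invoke the rigidity estimate \eqref{rig} (a consequence of Lemma~\ref{lem:maxh}): with overwhelming probability $\big|\theta_k-\tfrac{2\pi k}{N}\big|\le \frac{2\pi R}{N}$ for all $k$, with $R=R(N)$ a slowly growing quantity, say $R=(\log N)^2$, so that $3Ne^{-\beta R^2/\log N}\to0$ and the event holds for all $N$ beyond some random $\mathrm N_\epsilon$ by Borel--Cantelli. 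On this event, $|\theta_k|\ge \frac{2\pi|k|}{N}-\frac{2\pi R}{N}$ (using the periodic extension $\theta_{k+\ell N}=\theta_k+2\pi\ell$, so $\tfrac{2\pi k}{N}$ is the correct reference point for all $k\in\Z$), hence $w(\theta_k L)\neq0$ implies $|k|\le \tfrac{\nu}{2}+R$. Since $N=N(\nu)=\lfloor\exp(\nu^{1/4})\rfloor$, we have $\log N\sim\nu^{1/4}$, so $R=(\log N)^2=O(\nu^{1/2})=o(\nu)$, and therefore $\tfrac{\nu}{2}+R\le\nu$ for $\nu$ large; this gives the first identity.

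For the second identity, the same idea applies to the Sine$_\beta$ points $\lambda_k$. Here $w(\lambda_k\nu^{-1})\neq0$ forces $|\lambda_k|\le\tfrac{\nu}{2}$. I would combine the coupling bound $\big|\tfrac{N}{2\pi}\theta_k-\lambda_k\big|\le\frac{1+k^2}{N^{1/2-\epsilon}}$ valid for $|k|\le N^{1/2-\epsilon}$ with the rigidity bound above: for such $k$, $\lambda_k$ is within $O(1)$ of $\tfrac{N}{2\pi}\theta_k$, which in turn is within $O(R)$ of $k$, so $|\lambda_k|\ge|k|-O(R)-O(1)$, forcing $|k|\le\tfrac{\nu}{2}+O(R)\le\nu$ for $\nu$ large. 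One must check that the indices $|k|\le\nu$ we care about indeed satisfy $|k|\le N^{1/2-\epsilon}$ so that the coupling bound applies: since $\nu=o(N^{1/2-\epsilon})$ (again because $N$ grows doubly-exponentially in $\log N\sim\nu^{1/4}$ while $\nu$ grows only polynomially in $\nu^{1/4}$), this holds for $\nu$ large. One also needs that $\lambda_k$ for $|k|>\nu$ cannot be dragged back into $[-\tfrac\nu2,\tfrac\nu2]$; this follows because the coupling guarantees $\lambda_k$ is close to $\tfrac{N}{2\pi}\theta_k$ which is close to $k$ uniformly over $|k|\le N^{1/2-\epsilon}$, and for the range $\nu<|k|\le N^{1/2-\epsilon}$ the error $\frac{1+k^2}{N^{1/2-\epsilon}}$ stays small relative to $|k|-\tfrac\nu2$, while for $|k|>N^{1/2-\epsilon}$ we can fall back on the translation invariance and rigidity of the Sine$_\beta$ process (or simply note $|\lambda_k|$ is then much larger than $\nu$).

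The main obstacle I anticipate is bookkeeping the three competing scales — the test-function window $\sim\nu$, the rigidity fluctuation $\sim R=(\log N)^2\sim\nu^{1/2}$, and the coupling error which grows like $k^2/N^{1/2-\epsilon}$ — and verifying that the choice $N(\nu)=\lfloor\exp(\nu^{1/4})\rfloor$ makes all the error terms $o(\nu)$ simultaneously and uniformly over the relevant index ranges, so that a single random threshold $\mathrm N_\epsilon$ works for all large $N$. The probabilistic input is entirely contained in \eqref{rig}/Lemma~\ref{lem:maxh} plus a Borel--Cantelli argument to upgrade the ``overwhelming probability'' statement to an almost-sure eventual statement, together with the almost-sure coupling of Theorem~\ref{thm:coupling}; no new estimates are needed. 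I would present the argument by fixing the event on which both rigidity (with $R=(\log N)^2$) and the coupling bound of Theorem~\ref{thm:coupling} hold for all $N\ge\mathrm N_\epsilon$, and then doing the elementary interval arithmetic on that event.
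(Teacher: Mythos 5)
Your overall strategy --- the rigidity estimate \eqref{rig} upgraded to an almost-sure eventual statement via Borel--Cantelli, combined with the coupling of Theorem~\ref{thm:coupling} and bookkeeping of the three scales $\nu$, $(\log N)^2$ and the coupling error --- is exactly the route the paper takes, and your treatment of the first identity is correct: after periodic extension the rigidity bound applies to every $k\in\Z$, so $w(\theta_kL)\neq 0$ indeed forces $|k|\le \tfrac{\nu}{2}+R\le\nu$. The gap is in the second identity. For $\nu<|k|\le N^{1/2-\epsilon}$ you assert that the coupling error $\frac{1+k^2}{N^{1/2-\epsilon}}$ ``stays small relative to $|k|-\tfrac\nu2$''; this fails at the top of the coupling window: for $|k|\asymp N^{1/2-\epsilon}$ the error is of order $N^{1/2-\epsilon}\asymp|k|$, so the triangle inequality only yields $|\lambda_k|\ge |k|-R-\frac{1+k^2}{N^{1/2-\epsilon}}$, whose right-hand side can be of order $-R$, and you cannot conclude $|\lambda_k|>\nu/2$ there. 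For $|k|>N^{1/2-\epsilon}$ you have no estimate at all: ``translation invariance and rigidity of the Sine$_\beta$ process'' is an external input the lemma does not use, and ``simply note $|\lambda_k|$ is then much larger than $\nu$'' is precisely the statement that needs proof.

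The missing (and very cheap) idea is monotonicity of the labelled configurations: both $k\mapsto\theta_k$ (with the periodic extension) and $k\mapsto\lambda_k$ are nondecreasing. Hence it suffices to control the two boundary indices $k=\pm\nu$, which sit comfortably inside the coupling window because $\nu\approx(\log N)^4\ll N^{1/4-\epsilon}$, so there the coupling error is at most of order $N^{-\epsilon}$. Rigidity gives $\theta_\nu L\ge 1-\nu^{-1}(\log N)^{1+\epsilon}\ge\tfrac12$, the coupling then gives $\nu^{-1}\lambda_\nu\ge\tfrac12$, and symmetrically $\theta_{-\nu}L,\ \nu^{-1}\lambda_{-\nu}\le-\tfrac12$; by monotonicity every index with $|k|\ge\nu$ is pushed outside $[-\tfrac12,\tfrac12]$, where $w$ vanishes. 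This is exactly how the paper argues, and with that one observation inserted (replacing your case analysis over $\nu<|k|\le N^{1/2-\epsilon}$ and $|k|>N^{1/2-\epsilon}$) your proof closes.
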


\begin{proof}
Using the estimate \eqref{rig} with the Borel--Cantelli Lemma combined with  Theorem~\ref{thm:coupling}, we see that there exists a random integer $\mathrm{N}_\epsilon$
such that it holds for all $N\ge \mathrm{N}_\epsilon$, 
\begin{equation} \label{rigidity}
\begin{cases}
 \big| \frac{N\theta_k}{2\pi} - k \big| \le  (\log N)^{1+\epsilon} & \forall k\in\Z \\
 \big| \frac{N\theta_k}{2\pi} - \lambda_k \big| \le N^{-\epsilon}
 &\forall |k| \le N^{1/4-\epsilon} 
\end{cases} .
\end{equation}
In particular, if $N(\nu)\ge \mathrm{N}_\epsilon$, we also have
\[
\min\left\{ \theta_\nu L(\nu) , \nu^{-1} \lambda_{\nu}  \right\} \ge 1 -\nu^{-1} (\log N(\nu) )^{1+\epsilon} \ge 1- \nu^{-\frac{3-\epsilon}{4}} \ge 1/2 . 
\]
Similarly, we can show that $\max\left\{ \theta_{-\nu} L(\nu) , \nu^{-1} \lambda_{-\nu}  \right\} \le -1/2$. Since we assume that $\operatorname{supp}(w) \subseteq [-\frac 12, \frac12]$, this implies that $w(\theta_k L ) = w(\nu^{-1} \lambda_k) = 0$ for all $|k| \ge \nu$. 
This completes the proof.
\end{proof}

\begin{remark}
It follows from the estimate \eqref{rigidity} that for any $\epsilon>0$, 
\[
\lim_{M\to+\infty} \P\left[ \max_{|k| \le M } |\lambda_k- k| \le (\log M )^{1+\epsilon}\right] =1 . 
\]
\hfill $\blacksquare$\end{remark}

We are now ready to prove our CLT for the Sine$_\beta$ process.

\begin{proof}[Proof of Theorem~\ref{clt:sine}]
Let $\mathrm{N_\epsilon}$ be as in Lemma~\ref{lem:trunc}. 
Since $w\in \Co^1_c(\R)$, we have for all $N\ge \mathrm{N_\epsilon}$, 
\[ \begin{aligned}
\bigg| \sum_{k\in \Z} w(\lambda_k \nu^{-1}) -  \sum_{k\in [N]} w(\theta_k L) 
\bigg| 
& \le \sum_{|k|<\nu} \big| w(\lambda_k \nu^{-1}) -   w(\theta_k L) \big|
 \\
&\le 2 \nu\max_{|k| < \nu} \big| \tfrac{N\theta_k}{2\pi}\nu^{-1} - \lambda_k \nu^{-1} \big|\  \| w'\|_{L^\infty(\R)} .
\end{aligned}\]  
Since  $\nu \le \big(\log N(\nu) \big)^{4}$, using the notation \eqref{LS}, this implies that for all $N(\nu)\ge \mathrm{N_\epsilon}$, 
\[
\bigg| \sum_{k\in \Z} w(\lambda_k \nu^{-1}) -  \int w_L d \widetilde{\mu}_N + N \int_{-\pi}^\pi w_L(x) \frac{dx}{2\pi}
\bigg|  \le 2 \| w'\|_{L^\infty(\R)} \max_{|k| \le (\log N)^4} \big| \tfrac{N}{2\pi}\theta_k - \lambda_k \big| . 
\]
By \eqref{rigidity}, the RHS of the above inequality converges to 0 almost surely as $N\to+\infty$. Moreover  since we have
$\displaystyle  N \int_{-\pi}^\pi w_L(x) \frac{dx}{2\pi} = \nu \int_\R w(x) dx$
and $\displaystyle\int w_L d \widetilde{\mu}_N \to  \mathscr{N}\big(0,\tfrac{2}{\beta}\| w\|_{H^{1/2}(\R)}^2\big) $ weakly as $N\to+\infty$, by Slutsky's Lemma, we conclude that  
\[
 \sum_{k\in \Z} w(\lambda_k \nu^{-1}) -  \nu \int_\R w dx 
 \to \mathscr{N}\big(0,\tfrac{2}{\beta}\| w\|_{H^{1/2}(\R)}^2\big) , 
  \]
  as $\nu\to+\infty$ in distribution.
\end{proof}

\bigskip

\appendix

\noindent{\bf\Large Appendices}
\vspace{-.3cm}

\section{Proof of Lemma~\ref{lem:maxh}} \label{app:max}

We make use of the explicit formula  \eqref{ZG1}.
By \eqref{Psi}, we see that almost surely:  $|\Psi_N(\theta)|\le \pi N$ for all $\theta\in\T$. So both sides of formula \eqref{ZG1} are analytic in the strip 
$\big\{ \gamma\in \C : | \Im \gamma| \le 2 \big\}$.  This implies that for any $\gamma\in\R$ and $\theta\in\T$, 
\begin{align} \notag
\E^\beta_N \big[ e^{\gamma \Psi_N(\theta)} \big] &= \prod_{k=0}^{N-1} \left| \frac{\Gamma(1+ \frac{k\beta}{2})}{\Gamma(1+ \frac{k\beta+\i \gamma}{2})}\right|^2 \\
&\label{ZG2}
=  \prod_{k=0}^{N-1}  \prod_{\ell=1}^{+\infty} \left( 1+ \left( \frac{\gamma}{k \beta + 2\ell} 
\right)^2 \right)
\end{align}
where we used properties of the Gamma function and the infinite product is convergent, see \href{https://dlmf.nist.gov/5.8}{https://dlmf.nist.gov/5.8}. 
Moreover, observe that
\[
\log\left( \prod_{k=0}^{N-1}  \prod_{\ell=1}^{+\infty} \left( 1+ \left( \frac{\gamma}{k \beta + 2\ell} 
\right)^2 \right) \right)  \le  \sum_{k=0}^{N-1} \sum_{\ell=1}^{+\infty} \left( \frac{\gamma}{k \beta + 2\ell} \right)^2 , 
\]
and using that  $\sum_{\ell=1}^{+\infty} \frac{1}{(\alpha + 2\ell)^2} \le \frac{1}{2\alpha}$ for any $\alpha>0$, we obtain 
\[ \begin{aligned}
\log\left( \prod_{k=0}^{N-1}  \prod_{\ell=1}^{+\infty} \left( 1+ \left( \frac{\gamma}{k \beta + 2\ell} 
\right)^2 \right) \right)  &\le \frac{\gamma^2}{2\beta}  \sum_{k=1}^{N-1} \frac{1}{k} +\frac {\gamma^2 \pi^2}{24} \\
&\le \frac{\gamma^2}{2\beta}( \log N+1) +\frac {\gamma^2 \pi^2}{24} .
\end{aligned}\]
This estimate implies that there exists a universal constant  $c_\beta>0$ such that for any $\gamma\in\R$ and $\theta\in\T$, 
\begin{equation} \label{exp_moment}
\E^\beta_N \big[ e^{\gamma \Psi_N(\theta)} \big] \le  e^{ c_\beta \gamma^2} N^{\gamma^2/2\beta} . 
\end{equation}

By \eqref{eigcount}, we have
\[
\max_\T |h_N| \le \frac 1\pi \max_\T |\Psi_N|  +  \frac 1\pi |\Psi_N(0)|  ,
\]
so that  for any $t \ge 1$,
\[
\P^\beta_{N}\left[ \max_\T |h_N|  \ge t\right]  \le \P^\beta_{N}\left[ \max_\T |\Psi_N|  \ge  \tfrac{\pi (t+1)}{2}\right]  + \P^\beta_{N}\left[ |\Psi_N(0)|  \ge  \tfrac{\pi (t-1)}{2}\right]   .
\]
Then, using the estimate \eqref{UB8},  by a union bound we obtain
\[
\P^\beta_{N}\left[ \max_\T |h_N|  \ge t\right]  \le   \sum_{k=0,\dots, N}  \P^\beta_{N}\left[ |\Psi_N(\tfrac{2\pi k}{N})|  \ge  \tfrac{\pi (t-1)}{2}\right]  . 
\]
Using the estimate \eqref{exp_moment} and Markov's inequality, this implies  that  for any $t \ge 1$,
\[ \begin{aligned}
\P^\beta_{N}\left[ \max_\T |h_N|  \ge t\right]
& \le  2   e^{- \frac{\gamma \pi (t-1)}{2}} \sum_{k=0,\dots, N}   \E^\beta_N \big[ e^{\gamma \Psi_N(\frac{2\pi k}{N})} \big]  \\
&\le 2  e^{ c_\beta \gamma^2} (N+1) N^{\gamma^2/2\beta}  e^{- \frac{\gamma \pi (t-1)}{2}} .
\end{aligned}\]
If we optimize and choose $\gamma = \frac{\beta \pi (t-1)}{2 \log N + \beta c_\beta}$, we obtain 
\[
\P^\beta_{N}\left[ \max_\T h_N \ge t\right] \le 2  (N+1) e^{- \frac{\beta \pi^2}{4} \frac{(t-1)^2}{2\log N + \beta c_\beta}} . 
\]
Hence, we conclude that there exists $N_\beta\in\N$, such that for all $N\ge N_\beta$ and any $t>0$ $($possibly depending on $N)$,
\[ 
\P^\beta_{N}\left[ \max_\T|h_N| \ge t\right] \le 3 N e^{- \frac{\beta t^2}{\log N}} .
\]

\section{Proofs of Lemma~\ref{lem:phi1} and Lemma~\ref{lem:phi2}} \label{app:lem}

Recall that for $0\le r<1$, the functions $\phi_r$ and $\psi_r$ are given by \eqref{phi} and \eqref{psi} respectively. We also define for $\theta\in (0,2\pi)$,
\begin{equation}\label{phipsi}
\phi(\theta) = \log|1-e^{\i\theta}|  
\qquad\text{and}\qquad
\psi(\theta)= \Im\log(1-e^{\i\theta}) = \frac{\pi- \theta}{2} . 
\end{equation}

\begin{proof}[Proof of Lemma~\ref{lem:phi1}]
\underline{Estimates \eqref{est1}.}
First, by the maximum principle, $\|\psi_r\|_\infty \le \|\psi\|_\infty = \pi$.
Then, an explicit computation gives $\psi_r'(\theta) =- \frac{r \cos\theta -r^2}{(1-r)^2 + 2r (1-\cos\theta)}$, so that
\[ \begin{aligned}
\int_\T |\psi_r'(\theta)| d\theta & \le  2(1-r)  \int_0^\pi \frac{1}{(1-r)^2 + 2r (1-\cos\theta)} d\theta + \int_0^\pi  \frac{2r(1-\cos\theta)}{(1-r)^2 + 2r (1-\cos\theta)} d\theta . 
\end{aligned}\]
The second integral is obviously bounded by $\pi$ and we can estimate the first by 
\begin{equation} \label{est10}
 \int_0^\pi \frac{1}{(1-r)^2 + 2r (1-\cos\theta)} d\theta  \le 
\frac{\epsilon}{(1-r)^2} + \pi \int_{\epsilon}^{\pi} \frac{d\theta}{\theta^2} 
\le \frac{\epsilon}{(1-r)^2}  + \frac{\pi}{\epsilon} ,
\end{equation}
where we used that $1-\cos\theta \ge \frac{\theta^2}{2\pi}$ for all $\theta\in [-\pi, \pi]$. Choosing $\epsilon = \sqrt{\pi}(1-r)$, we obtain 
\[
\int_\T |\psi_r'(\theta)| d\theta \le 4\sqrt{\pi} +\pi
\]
which proves the claim.

\smallskip
\underline{Estimates \eqref{est2}.} We clearly have $\| \phi_r \|_\infty = \phi_r(0) = \log|1- r|^{-1}$. 
Moreover, an explicit computation gives $\phi_r'(\theta) =- \frac{r \sin\theta}{(1-r)^2 + 2r (1-\cos\theta)}$, so that by a change of variables:
\[ \begin{aligned}
\int_\T |\phi_r'(\theta)| d\theta &= 2r  \int_0^\pi \frac{ \sin\theta}{(1-r)^2 + 2r (1-\cos\theta)} d\theta\\
&= \int_0^{2r} \frac{du}{(1-r)^2 + u} = 2\log\bigg(\frac{\sqrt{1+r^2}}{1-r} \bigg) . 
\end{aligned}\]

\smallskip
\underline{Estimates \eqref{est3}.} 
Since $\psi_r' = \sum_{k\ge 1} r^k \cos(k \cdot)$ and $\phi_r' = - \sum_{k\ge 1} r^k \sin(k \cdot)$, we have
\[ \|\psi_r'\|_\infty , \|\phi'_r\|_\infty \le  \frac{1}{1-r} .\]

\smallskip
\underline{Estimates \eqref{est5}.} 
If $z= r e^{\i\theta}$, then we verify that
$\displaystyle \phi_r'' (\theta)  = - \Re \frac{z}{(1-z)^2}$
and $\displaystyle \psi_r'' (\theta)  = - \Im \frac{z}{(1-z)^2}$. 
This shows that  for any $\theta \in\T$, 
  $| \phi_r'' (\theta) | ,  | \psi_r'' (\theta) |  \le |1-z|^{-2}$ and  by \eqref{est10}, we obtain
\[
 \int_\T |\psi_r'' (\theta) | d\theta , \ \int_\T |\phi_r'' (\theta) | d\theta \le 2  \int_0^\pi \frac{ d\theta }{(1-r)^2 + 2r (1-\cos\theta)} \le \frac{4\sqrt{\pi}}{1-r} .
\]
This completes the proof.
\end{proof}

\begin{proof}[Proof of Lemma~\ref{lem:phi2}]
Going through the proof of Proposition~\ref{prop:R}, we have for any $0<\epsilon\le 1$, 
\begin{equation} \label{R7}
R_0(\phi_r) \le R_3(\phi_r)  + \frac{\pi^2}{2} \left( \epsilon^{-1}  \|\psi_r'\|_{L^1(\T)} + \frac{\epsilon}{3} \left(  \left\|  \mathrm{M}_\epsilon \psi_r''' \right\|_{L^1(\T)} +  \| \psi_r'\|_\infty  \right)   \right) ,
\end{equation}
where $R_3$ is given by \eqref{R3} and $ \displaystyle \mathrm{M}_\epsilon \psi_r'''(x)= \hspace{-.3cm} \sup_{|\zeta -x| \le \epsilon} \hspace{-.1cm}|\psi_r'''(\zeta)| $.

If $z= r e^{\i\theta}$, then we verify that  $\displaystyle  \psi_r''' (\theta)  = \Re \frac{z(1+z)}{(1-z)^3}$.  
So, if $\epsilon \le \frac{1}{2}$, we  easily verify  that
\[
\mathrm{M}_\epsilon  \psi_r''' (\theta) \le  
\begin{cases} 
\frac{2}{(1-r^3)}&\text{if }\theta\in [-2\epsilon, 2\epsilon] \\
\frac{2r}{|1-r  e^{\i(\theta-\epsilon)}|^3}  &\text{if }\theta\in [2\epsilon, \pi/2] \cup  [3\pi/2, 2(\pi-\epsilon)] \\
8 &\text{if }\theta\in [\pi/2, 3\pi/2]
\end{cases} .
\]
Since $|1-r  e^{\i(\theta-\epsilon)}| \ge \sqrt{r/\pi} (\theta-\epsilon)$ 
 if $\theta\in [2\epsilon, \pi/2]$, this implies that if $1/\sqrt{\pi} \le r<1$, then
\[ \begin{aligned}
\int_\T  \mathrm{M}_\epsilon  \psi_r''' (\theta) d\theta & \le 
\frac{4\epsilon}{(1-r)^3} 
+ 4\pi^2   \int_{2\epsilon}^{\pi/2} \frac{d\theta}{(\theta-\epsilon)^3}
+8\pi  \\
&= \frac{4\epsilon}{(1-r)^3} + \frac{2\pi^2}{\epsilon^2}+\pi .\\
\end{aligned}\]

By \eqref{R7} combined with the estimates \eqref{est1} and \eqref{est3}, this implies that there exists a universal constant $C>0$ such that 
\begin{equation} \label{R30}
R_0(\phi_r) \le R_3(\phi_r)  +  C\left( \epsilon^{-1} +
 \frac{\epsilon^2}{(1-r)^3} +\frac{\epsilon}{1-r}   \right) . 
\end{equation}
We also need an estimate for  $R_3$. For $k\ge 0$, let $\epsilon_k = \epsilon (\log \epsilon^{-1})^k$.  By \eqref{R3}, We have
\begin{equation} \label{R31} 
\begin{aligned}
R_3(\phi_r)  
&\le   \frac{\pi^3}{4}  \sum_{k=0}^{K_\epsilon}\iint_{\T^2}  \1_{\epsilon_k\le  |x_1-x_2| \le \epsilon_{k+1}}  \bigg| \frac{ \psi_r(x_1)-\psi_r(x_2)}{(x_1-x_2)^3}\bigg| d x_1d x_2  , 
\end{aligned}
\end{equation} 
where $K_\epsilon = \inf\{ k\ge 0 : \epsilon (\log \epsilon^{-1})^k \ge \pi\}$. 
A similar argument as above shows that if $ \displaystyle \mathrm{M}_\delta \psi_r'(x)= \hspace{-.3cm} \sup_{|\zeta -x| \le \delta} \hspace{-.1cm}|\psi_r'(\zeta)| $, then for any $0<\delta\le \frac 12$, 
\[ \begin{aligned}
\int_\T \mathrm{M}_{\delta} \psi_r' (x) d x  
\le \frac{8\delta}{1-r} + 2\pi \log\delta^{-1}  . 
\end{aligned}\]
By Taylor's Theorem, since the function $\psi_r$ is smooth, this implies that for any $k\ge 0$, 
\[ \begin{aligned}
\iint_{\T^2}  \1_{\epsilon_k\le  |x_1-x_2| \le \epsilon_{k+1}}  \bigg| \frac{ \psi_r(x_1)-\psi_r(x_2)}{(x_1-x_2)^3}\bigg| \d x_1\d x_2 
& \le \int_\T \mathrm{M}_{\epsilon_{k+1}}\psi_r'(x_1) \left( \int_\T \1_{|x_1-x_2| \ge \epsilon_{k}} \frac{dx_2}{|x_1-x_2|^2} \right)  dx_1 \\
&\le  16 \frac{\epsilon_{k+1} \epsilon_k^{-1}}{1-r} + 4\pi  \epsilon_k^{-1}\log \epsilon_{k+1} ^{-1} \\
&\le 16 \log\epsilon^{-1} \left( \frac{1}{1-r} + \frac 1 \epsilon \right) . 
\end{aligned}\]
 We easily check that  $K_\epsilon +1 \le \frac{3\log \epsilon^{-1}}{\log \log \epsilon^{-1}}$, so by \eqref{R31}, there exists a constant $C>0$ such that 
 \begin{equation} \label{R32} 
 R_3(\phi_r) \le  C\frac{( \log \epsilon^{-1})^2}{\log \log \epsilon^{-1}} \left( \frac{1}{1-r} + \frac 1 \epsilon \right) .
 \end{equation}
 Combining the estimates \eqref{R30}, \eqref{R32}  and taking $\epsilon=1-r$, we obtain
 \begin{equation*} 
R_0(\phi_r) \le  C\frac{( \log (1-r)^{-1})^2}{(1-r)\log \log (1-r)^{-1}} .
\end{equation*}
By exactly the same argument, we obtain a similar bound for $R_0(\psi_r)$ and this completes the proof.
 \end{proof}


\end{document}